\newtheorem{theorem}{Theorem}
\newtheorem{lemma}{Lemma}
\newtheorem{proposition}{Proposition}
\newtheorem{corollary}{Corollary}
\newtheorem{remark}{Remark}
\DeclareMathOperator{\N}{\mathbb{N}}
\DeclareMathOperator{\Z}{\mathbb{Z}}
\DeclareMathOperator{\Com}{C}
\DeclareMathOperator{\PPP}{\mathbb{P}}
\DeclareMathOperator{\Exc}{TExc}
\DeclareMathOperator{\Fix}{Fix}
\DeclareMathOperator{\NFix}{NFix}
\title{Kazhdan--Lusztig polynomials of boolean elements\thanks{This paper is part of the author's Ph.D. thesis written under the direction of Prof. F. Brenti at the Univ. ``la Sapienza" of Rome, Italy.}}
\author{Pietro Mongelli\thanks{Universit\`a "Sapienza" di Roma, Roma,  Italy, e-mail:mongelli@mat.uniroma1.it}}
\date{}
\begin{document}
\setcounter{page}{1}

\maketitle


{\bf Abstract.}\\
We give  closed combinatorial product formulas for Kazhdan--Lusztig poynomials and their parabolic analogue of type $q$ in the case of boolean elements, introduced in [M.\ Marietti, Boolean elements in Kazhdan--Lusztig theory, J.\ Algebra 295 (2006)], in Coxeter groups whose Coxeter graph is a tree.

{\bf Keywords: }Coxeter groups, Kazhdan--Lusztig polynomials, boolean elements.

\section{Introduction}
In their fundamental paper \cite{Kazhdan1979} Kazhdan and Lusztig defined, for every Coxeter group $W$, a family of polynomials, indexed by pairs of elements of $W$, which have become known as the Kazhdan--Lusztig polynomials of $W$ (see, e.\ g., \cite[Chap. 7]{Humphreys1990} or \cite[Chap. 2]{Bjorner2005}). These polynomials play an important role in several areas of mathematics, including the algebraic geometry and topology of Schubert varieties and representation theory (see, e.\ g., \cite[Chap. 5]{Bjorner2005}, and the references cited there). In particular, their coefficients gives the dimensions of the intersection cohomology modules for Schubert varieties (see, e.\ g., \cite{Kazhdan1980}).

In order to find a method for the computation of the dimensions of the intersection cohomology modules corresponding to Schubert varieties in $G/P$, where $P$ is a parabolic subgroup of the Kac-Moody group $G$, in 1987 Deodhar (\cite{Deodhar1987}) introduced two parabolic analogues of  these polynomials which correspond to the roots $x=q$ and $x=-1$ of the equation $x^2=q+(q-1)x$. These parabolic Kazhdan--Lusztig polynomials reduce to the ordinary ones for the trivial parabolic subgroup and are also related to them in other ways (see, e.\ g., Proposition \ref{P:DefQuot} below). Besides these connections the parabolic polynomials also play a direct role in several areas including the theories of generalized Verma modules (\cite{Casian1987}), tilting modules (\cite{Soergel1997}, \cite{Soergel1997bis}) and Macdonald polynomials(\cite{Haglund2005}, \cite{Haglund2005bis}).


The purpose of this work is to give explicit combinatorial product formulas for all (parabolic and ordinary) Kazhdan-Lusztig polynomials indexed by pairs of boolean elements (see Section \ref{S:Def} for the definition) in all Coxeter groups whose Coxeter graph is a tree. Our results show that all such polynomials have nonnegative coefficients, conjectured by Kazhdan and Lusztig \cite{Kazhdan1979}, and give a combinatorial interpretation of them in terms of Catalan numbers and the Coxeter graph of the group.
In the case of classical Weyl groups, this combinatorial interpretation can be restated in terms of excedances and other statistics of (signed) permutations. Our results also confirm a conjecure of Brenti on the parabolic Kazhdan-Lusztig polynomials of type $q$ (see Corollary \ref{C:conjB} below).

The organization of the paper is as follows. In the next section we recall definitions, notation and results that are used in the rest of this work. In Section \ref{S:Preliminary} we give some lemmas about the computation of parabolic Kazhdan--Lusztig polynomials indexed by boolean elements and introduce and illustrate some properties of "Catalan triangle" which will appear in the main result.
In Section \ref{S:KLMain} we state and prove our main result, namely an explicit closed combinatorial formula for all (parabolic and ordinary) Kazhdan--Lusztig polynomials of boolean elements of Coxeter group whose Coxeter graph is a tree. In Section \ref{S:CombKL} we restate the formulas using statistics associated to (signed) permutations for the classical Weyl groups. Finally, in Section \ref{S:Poincare} we use our main result to compute the  intersection homology Poincar\'e polynomials indexed by boolean elements in all Coxeter groups whose Coxeter graphs have at most one vertex with more than two adjacent vertices.

\section{Definitions, notation and preliminaries}
\label{S:Def}
We let $\PPP:=\{1,2,3,\dots\}$, $\N:=\PPP\cup\{0\}$, $\Z:=\N\cup \{-1,-2,\dots\}$. For all $m,n\in \Z$, $m\le n$ we set $[m,n]:=\{m,m+1,\dots,n\}$ and $[n]:=[1,n]$. Given a set $A$ we denote by $\# A $ its cardinality.

We follow \cite[Chap. 3]{Stanley1986} for poset notation and terminology. In particular, given a poset $(P,\le)$ and $u,v\in P$ we let $[u,v]:=\{w\in P\vert u\le w\le v\}$ and call this an \emph{interval} of $P$. We say that $v$ \emph{covers} $u$, denoted $u \triangleleft v$ (or, equivalently, that $u$ is \emph{covered} by $v$) if $\#[u,v]=2$.

We follow \cite{Humphreys1990} for general Coxeter groups notation and terminology. Given a Coxeter system $(W,S)$ and $u\in W$ we denote by $l(u)$ the length of $u$ in $W$, with respect to $S$, i.\ e.\ the minimal length of words $s_{i_1}\cdots s_{i_k}=u$ whose alphabet is $S$ (such minimal words are called reduced). Given $u,v\in W$ we denote by $l(u,v)=l(v)-l(u)$. We let $D_R(u):=\{s\in S\vert l(us)<l(u)\}$ the set of the right descents of $u$, $D_L(u):= \{s\in S\vert l(su)<l(u)\}$ the set of the left descents of $u$ and we denote by $\epsilon$ the identity of $W$. Given $J\subseteq S$ we let $W_J$ the parabolic subgroup generated by $J$ and 
\begin{equation}
\label{E:ParabolicQuotient}
W^J :=\{u\in W\vert l(su)>l(u) \text{ for all } s\in J\}
\end{equation}
Note that $W^\emptyset=W$.  If $W_J$ is finite, then we denote by $w_0(J)$ its longest element. We will always assume that $W^J$ is partially ordered by \emph{Bruhat order}. Recall (see e.g. \cite[Chap. 5.9 and 5.10]{Humphreys1990}) that this means that $x\le y$ if and only if for one reduced word of $y$ (equivalently for all) there exists a subword that is a reduced word of $x$. Given $u,v\in W^J$, $u\le v$ we let
$$
[u,v]^J:=\{w\in W^J\vert u\le w\le v\},
$$
and $[u,v]:=[u,v]^\emptyset$.

For $J\subseteq S$, $x\in \{-1,q\}$ and $u,v\in W^J$ we denote by $P_{u,v}^{J,x}(q)$ the parabolic Kazhdan--Lusztig polynomials in $W^J$ of type $x$ (we refer the reader to \cite{Deodhar1987} for the definitions of these polynomials, see also Proposition \ref{P:DefQuot} below). We denote by $P_{u,v}(q)$ the ordinary Kazhdan--Lusztig polynomials.

The following result is due to Deodhar, and we refer the reader to \cite{Deodhar1987} for its proof. For $u,v\in W^J$ let $\mu_{J,q}(u,v)$ be the coefficient of $q^{\frac{1}{2}(l(u,v)-1)}$ in $P_{u,v}^{J,q}(q)$ (so $\mu_{J,q}(u,v)=0$ when $l(v)-l(u)$ is even). It is well known that if $u,v\in W^J$ then $\mu_{J,q}(u,v)=\mu(u,v)$, the coefficient of $q^{\frac{1}{2}(l(u,v)-1)}$ in $P_{u,v}(q)$ (see Corollary \ref{C:MuEqual} below).

\begin{proposition}
\label{P:KLFromDesc}
Let $(W,S)$ be a Coxeter system, $J\subseteq S$, and $u,v\in W^{J},u\le v$. Then for each $s\in D_R(v)$ we have that
\begin{equation}
\label{E:KLFromDesc}
P_{u,v}^{J,q}(q)=\widetilde P_{u,v}-\widetilde M_{u,v}
\end{equation}
where
$$
\widetilde P_{u,v}=\left \{ \begin{array}{ll} 
P_{us,vs}^{J,q}+q P_{u,vs}^{J,q} & \text{if } us<u;\\
qP_{us,vs}^{J,q}+ P_{u,vs}^{J,q} & \text{if } u<us\in W^J;\\
0 & \text{if } u<us\not\in W^J.
\end{array}\right.
$$
and
$$
\widetilde M_{u,v}=\sum_{u\le w<vs\vert ws<w} \mu(w,vs)q^{\frac{l(w,v)}{2}}P_{u,w}^{J,q}(q).
$$
\end{proposition}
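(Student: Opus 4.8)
The plan is to realize everything inside the Iwahori--Hecke algebra $\mathcal{H}$ of $(W,S)$ over $\mathbb{Z}[q^{1/2},q^{-1/2}]$, with its standard basis $\{T_w\}_{w\in W}$, bar involution $\overline{\phantom{x}}$ and Kazhdan--Lusztig basis $\{C_w'\}$, together with Deodhar's parabolic module $\mathcal{M}^{J,q}$: the right $\mathcal{H}$-module with standard basis $\{M_w\}_{w\in W^J}$ whose action of $T_s$ splits into the three cases $ws<w$, \ $ws>w$ with $ws\in W^J$, and $ws>w$ with $ws\notin W^J$, and which carries a bar involution and a Kazhdan--Lusztig basis $\{C_w^{J,q}\}$ characterized by bar-invariance, the triangularity $C_v^{J,q}=q^{-l(v)/2}\sum_{u\le v}P_{u,v}^{J,q}M_u$, and the degree bound $\deg P_{u,v}^{J,q}\le\frac12(l(u,v)-1)$. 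Since the $P_{u,v}^{J,q}$ are exactly the coordinates of these basis vectors, \eqref{E:KLFromDesc} becomes a statement about how $C_v^{J,q}$ is assembled from the data at $vs$.

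First I would record two preliminaries. One: if $v\in W^J$ and $s\in D_R(v)$ then $vs<v$ and $vs\in W^J$, so that the right-hand side of \eqref{E:KLFromDesc} is meaningful; this follows from the exchange condition (if some $t\in J$ had $t(vs)<vs$ then $l(tv)=l((t(vs))s)$ would differ from $l(v)+1=l(tv)$, a contradiction). Two: writing $C_s'=q^{-1/2}(T_e+T_s)$, the right action on the module Kazhdan--Lusztig basis obeys
\begin{equation*}
C_w^{J,q}C_s'=\begin{cases}(q^{1/2}+q^{-1/2})C_w^{J,q}&\text{if }ws<w,\\ C_{ws}^{J,q}+\sum_{z<w,\;zs<z}\mu(z,w)C_z^{J,q}&\text{if }ws>w,\;ws\in W^J,\\ 0&\text{if }ws>w,\;ws\notin W^J.\end{cases}
\end{equation*}
This is the module analogue of the classical product rule; I would prove it by checking that the displayed right-hand side is bar-invariant (both $C_s'$ and each $C^{J,q}$ are) and satisfies the defining triangularity and degree bound, whence it coincides with $C_w^{J,q}C_s'$ by uniqueness of the Kazhdan--Lusztig basis.

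Applying the middle case to $w=vs$ (legitimate since $(vs)s=v\in W^J$ and $v>vs$) gives the master identity
\begin{equation*}
C_v^{J,q}=C_{vs}^{J,q}C_s'-\sum_{\substack{z<vs\\ zs<z}}\mu(z,vs)C_z^{J,q}.
\end{equation*}
I would then expand both sides in the standard basis and equate the coefficient of $M_u$. On the left this coefficient is $q^{-l(v)/2}P_{u,v}^{J,q}$. In the product $C_{vs}^{J,q}C_s'=q^{-l(vs)/2}\sum_y P_{y,vs}^{J,q}\,M_yC_s'$ the only summands contributing $M_u$ come from $y=u$ (diagonally, in every case) and, when $us\in W^J$, from $y=us$ (off-diagonally); evaluating $M_uC_s'$ and $M_{us}C_s'$ through the three action cases, distinguishing $us<u$, \ $us>u$ with $us\in W^J$, and $us>u$ with $us\notin W^J$, and using $l(v)=l(vs)+1$ to reconcile the half-integer powers of $q$, reproduces exactly $q^{-l(v)/2}\widetilde P_{u,v}$. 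Re-indexing the correction sum by $w=z$ and converting each $\mu(z,vs)\,q^{-l(z)/2}P_{u,z}^{J,q}$ into the common normalization via $q^{-l(z)/2}=q^{-l(v)/2}q^{l(z,v)/2}$ yields $q^{-l(v)/2}\widetilde M_{u,v}$; cancelling $q^{-l(v)/2}$ gives \eqref{E:KLFromDesc}.

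The hard part will be the third case, $us>u$ with $us\notin W^J$, where the claim is $\widetilde P_{u,v}=0$. This rests on the module action forcing $M_uC_s'=0$ there (equivalently on the vanishing third branch of the product rule), which is precisely the feature distinguishing $\mathcal{M}^{J,q}$ from the regular representation; one must both establish this vanishing and verify that no off-diagonal contribution to $M_u$ can slip in, which is automatic here because $M_{us}$ is not a basis vector when $us\notin W^J$. The remaining ingredients—the membership lemma $vs\in W^J$, the bar-invariance and degree-bound check underlying the product rule, and the consistent bookkeeping of the powers of $q^{1/2}$ in the normalization—are routine but must be carried through carefully.
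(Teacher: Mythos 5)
The paper offers no proof of this proposition: it is attributed to Deodhar and the reader is referred to \cite{Deodhar1987}, so there is no in-paper argument to compare against. Your proposal reconstructs what is essentially Deodhar's original proof (the type-$q$ polynomials as coordinates of the Kazhdan--Lusztig basis of the sign-induced module $\mathcal{M}^{J,q}$, the product rule with $C_s'$ applied at $vs$, and coefficient extraction), and the core of it is sound: the preliminary $vs\in W^J$ is correctly justified, the bookkeeping of the powers of $q^{1/2}$ in the three branches of $\widetilde P_{u,v}$ checks out, and the vanishing branch does indeed come from $M_uT_s=-M_u$ forcing $M_uC_s'=0$ when $us>u$, $us\notin W^J$.

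There is, however, one genuine misstatement: the third branch of your product rule, $C_w^{J,q}C_s'=0$ for $ws>w$ with $ws\notin W^J$, is false in general. Only the leading term $M_wC_s'$ vanishes there; the lower-order terms survive, and the correct value is $\sum_{z\in W^J,\ z<w,\ zs<z}\mu(z,w)\,C_z^{J,q}$. A concrete counterexample: in $W=S_3$ with $J=\{s_1\}$, $w=s_2s_1$ and $s=s_2$ one computes $C_{s_2s_1}^{J,q}C_{s_2}'=C_{s_2}^{J,q}\neq 0$. In particular your plan to establish that branch by the bar-invariance/degree-bound/uniqueness argument would fail, since the candidate $0$ does not have the right expansion. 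This error does not propagate: the recursion only invokes the middle branch at $w=vs$ (legitimate because $vs\in W^J$ and $(vs)s=v\in W^J$), and the sums over $z$ (equivalently $w$) are implicitly restricted to $W^J$ because $\pi(C_z')=0$ for $z\notin W^J$ --- a point worth stating explicitly when you pass from the product rule in $\mathcal{H}$ to the module. With the third branch corrected (or simply omitted, since it is unused), the argument goes through.
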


The parabolic Kazhdan--Lusztig polynomials are related to their ordinary counterparts in several ways, including the following one, which may be taken as their definition in most cases.
\begin{proposition}
\label{P:DefQuot}
Let $(W,S)$ be a Coxeter system, $J\subseteq S$ and $u,v\in W^J$. Then we have that 
$$
P_{u,v}^{J,q}(q)=\sum_{w\in W_J} (-1)^{l(w)} P_{wu,v}(q).
$$
Moreover, if $W_J$ is finite, then
$$
P_{u,v}^{J,-1}(q)=P_{w_0(J)u,w_0(J)v}(q).
$$
\end{proposition}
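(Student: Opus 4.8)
The plan is to work inside the Iwahori--Hecke algebra $H$ of $(W,S)$ over $\mathbb{Z}[q^{1/2},q^{-1/2}]$, with standard basis $\{T_w\}_{w\in W}$ and Kazhdan--Lusztig basis $\{C'_w\}_{w\in W}$, where
$$
C'_w=q^{-l(w)/2}\sum_{x\le w}P_{x,w}(q)\,T_x
$$
is the unique bar-invariant element of $q^{-l(w)/2}\bigl(T_w+\sum_{x<w}\mathbb{Z}[q]\,T_x\bigr)$. Both identities will follow by transporting this self-duality characterization to Deodhar's parabolic modules through an $H$-equivariant map and then invoking the uniqueness of the self-dual basis; I will rely throughout on Deodhar's construction of the modules $M^{J,x}$ and their Kazhdan--Lusztig bases $\{C^{J,x}_v\}_{v\in W^J}$, whose expansion coefficients in the standard basis $\{M_u\}_{u\in W^J}$ are by definition the polynomials $P^{J,x}_{u,v}$.

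\textbf{Type $q$.} Realize $M^{J,q}$ as the module induced from the sign representation of the parabolic Hecke algebra $H_J$, so that $T_s$ acts by $-1$ for $s\in J$. First I would fix the projection $\pi\colon H\to M^{J,q}$ determined by $\pi(T_u)=M_u$ for $u\in W^J$, and establish the key projection formula: writing any $x\in W$ uniquely as $x=wu$ with $w\in W_J$, $u\in W^J$ and $l(x)=l(w)+l(u)$, one has $\pi(T_x)=(-1)^{l(w)}M_u$. Applying $\pi$ to $C'_v$ for $v\in W^J$ then gives
$$
\pi(C'_v)=q^{-l(v)/2}\sum_{u\in W^J}\Bigl(\sum_{w\in W_J}(-1)^{l(w)}P_{wu,v}(q)\Bigr)M_u,
$$
with the convention $P_{wu,v}=0$ when $wu\not\le v$. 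Since $\pi$ intertwines the bar involutions of $H$ and $M^{J,q}$, the element $\pi(C'_v)$ is bar-invariant; checking that its coefficient of $M_v$ is $1$ (the terms with $w\neq\epsilon$ vanishing because $wv>v$ lies outside $W^J$) and that the remaining coefficients lie in the correct polynomial range identifies $\pi(C'_v)=C^{J,q}_v$ by uniqueness. Comparing the coefficients of $M_u$ then yields the asserted alternating-sum formula.

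\textbf{Type $-1$, $W_J$ finite.} Here I would instead use the inclusion $\phi\colon M^{J,-1}\to H$ defined by $\phi(M_u)=C'_{w_0(J)}\,T_u$, where $M^{J,-1}$ is induced from the trivial representation $T_s\mapsto q$ of $H_J$. Since $P_{z,w_0(J)}=1$ for all $z\in W_J$ we have $C'_{w_0(J)}=q^{-l(w_0(J))/2}\sum_{z\in W_J}T_z$, and because $u\in W^J$ forces $l(zu)=l(z)+l(u)$, the product expands as $\phi(M_u)=q^{-l(w_0(J))/2}\sum_{z\in W_J}T_{zu}$, whose unique Bruhat-maximal term is $T_{w_0(J)u}$ with coefficient $q^{-l(w_0(J))/2}$. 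Thus $\phi$ is an injective $H$-module map intertwining the bar involutions, and $\phi(C^{J,-1}_v)$ is bar-invariant with top term $q^{-l(w_0(J)v)/2}T_{w_0(J)v}$, using $l(w_0(J)v)=l(w_0(J))+l(v)$. By uniqueness of the Kazhdan--Lusztig basis, $\phi(C^{J,-1}_v)=C'_{w_0(J)v}$, and reading off the coefficient of $T_{w_0(J)u}$ on both sides gives $P^{J,-1}_{u,v}=P_{w_0(J)u,\,w_0(J)v}$.

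The main obstacle in both parts is the same piece of bookkeeping: verifying that the transported element ($\pi(C'_v)$ or $\phi(C^{J,-1}_v)$) satisfies the triangularity hypothesis exactly, i.e.\ that its nonleading coefficients lie in $\mathbb{Z}[q]$ and are supported on elements strictly below the top one in Bruhat order, so that Deodhar's uniqueness characterization applies. For type $-1$ this requires the order-preservation property $u\le v\iff w_0(J)u\le w_0(J)v$ on $W^J$ together with a careful check that every term $T_{zu'}$ with $u'\le v$ satisfies $zu'\le w_0(J)v$; for type $q$ it requires controlling how the $\mathbb{Z}[q]$-coefficients of $C'_v$ survive under $\pi$. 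Once the compatibility of the bar involutions on $H$ and on $M^{J,x}$ is pinned down and these support conditions are verified, both formulas drop out by matching coefficients.
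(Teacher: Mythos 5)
The paper does not actually prove this proposition: it simply cites Deodhar (Proposition 3.4 and Remark 3.8 of the 1987 paper), so there is no in-paper argument to compare against. Your proposal is a correct and complete outline of the standard module-theoretic proof, and it is in substance the argument of the cited source: type $q$ is handled by the bar-equivariant surjection from $H$ onto the module induced from the sign character of $H_J$ (which is the right character, since it reproduces the alternating sum), and type $-1$ by embedding the trivially induced module into $H$ via right multiplication against $C'_{w_0(J)}=q^{-l(w_0(J))/2}\sum_{z\in W_J}T_z$. The verifications you flag as the remaining bookkeeping are exactly the right ones and all go through: the leading coefficient of $\pi(C'_v)$ is $1$ because $P_{wv,v}=0$ for $w\neq\epsilon$; the degree bounds survive $\pi$ since $l(wu)\ge l(u)$; and for type $-1$ one needs $zu\le w_0(J)v$ for all $z\in W_J$ and $u\le v$ in $W^J$, which follows from $zu\le w_0(J)u$ together with the order isomorphism $u\le v\iff w_0(J)u\le w_0(J)v$ on $W^J$, plus the observation that for $u=v$, $z\neq w_0(J)$ the constant coefficient still meets the strict degree bound because $l(w_0(J))-l(z)\ge 1$. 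So the proposal is correct; it just supplies the proof the paper outsources.
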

A proof of this result can be found in \cite{Deodhar1987} (see Proposition 3.4, and Remark 3.8).
Since for all $w\in W_J$ and $u\in W^J$ we have $l(wu)=l(w)+l(u)$ by \cite[Proposition 2.4.4]{Bjorner2005}, then the degree of $P_{wu,v}(q)$ in Proposition \ref{P:DefQuot} is less than $\frac{1}{2}(l(u,v)-1)$ except when $w=\epsilon$. Therefore we have
\begin{corollary}
\label{C:MuEqual}
For any $J\subseteq S$ and $u,v\in W^J$ we have
$$
\mu_{J,q}(u,v)=\mu(u,v).
$$
\end{corollary}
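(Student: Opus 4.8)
The plan is to read off the coefficient of $q^{\frac{1}{2}(l(u,v)-1)}$ from both sides of the identity in Proposition \ref{P:DefQuot}. By definition, this coefficient of the left-hand side $P_{u,v}^{J,q}(q)$ is exactly $\mu_{J,q}(u,v)$, so it suffices to show that the right-hand side $\sum_{w\in W_J}(-1)^{l(w)}P_{wu,v}(q)$ has $\mu(u,v)$ as its coefficient of $q^{\frac{1}{2}(l(u,v)-1)}$. First I would isolate the summand indexed by $w=\epsilon$, which is simply $P_{u,v}(q)$; its coefficient of $q^{\frac{1}{2}(l(u,v)-1)}$ is precisely $\mu(u,v)$ by the definition of $\mu$. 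It then remains to check that every summand with $w\neq\epsilon$ contributes nothing to this coefficient.

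For $w\neq\epsilon$ the key input is the length-additivity $l(wu)=l(w)+l(u)$, valid for $w\in W_J$ and $u\in W^J$, which is the fact cited from \cite[Proposition 2.4.4]{Bjorner2005} in the paragraph preceding the statement. Since $l(w)>0$ this gives $l(wu)>l(u)$, hence $l(v)-l(wu)=l(u,v)-l(w)<l(u,v)$. Combining this with the standard degree bound $\deg P_{x,v}(q)\le\frac{1}{2}(l(v)-l(x)-1)$ for ordinary Kazhdan--Lusztig polynomials, I obtain $\deg P_{wu,v}(q)\le\frac{1}{2}(l(u,v)-l(w)-1)<\frac{1}{2}(l(u,v)-1)$, so the coefficient of $q^{\frac{1}{2}(l(u,v)-1)}$ in each such $P_{wu,v}(q)$ vanishes. (When $wu\not\le v$ the polynomial is identically zero and there is nothing to verify.) Equating coefficients on the two sides then yields $\mu_{J,q}(u,v)=\mu(u,v)$, as claimed.

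The argument is uniform in the parity of $l(u,v)$: when $l(u,v)$ is even the exponent $\frac{1}{2}(l(u,v)-1)$ is not an integer, so both $\mu_{J,q}(u,v)$ and $\mu(u,v)$ are zero by convention and the identity holds trivially. I do not expect any genuine obstacle here; the only step requiring care is the \emph{strict} drop in degree for the terms with $w\neq\epsilon$, and this is supplied exactly by the length-additivity of the decomposition $W=W_J\cdot W^J$, which forces $l(wu)>l(u)$ for every nontrivial $w\in W_J$.
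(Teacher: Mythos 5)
Your proof is correct and follows essentially the same route as the paper: the paper likewise reads off the top coefficient from the identity in Proposition \ref{P:DefQuot}, using the length-additivity $l(wu)=l(w)+l(u)$ together with the degree bound on ordinary Kazhdan--Lusztig polynomials to see that every summand with $w\neq\epsilon$ has degree strictly less than $\frac{1}{2}(l(u,v)-1)$. Your write-up merely makes explicit the details (the strict degree drop, the $wu\not\le v$ case, and the parity remark) that the paper leaves implicit.
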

The following result is probably known, but for lack of an adequate reference we provide its proof here.
\begin{proposition}
\label{P:PropertiesKL}
Let $(W,S)$ a Coxeter system and $J\subseteq S$. Let $u,v\in W^J$ and $s\in D_R(v)$. 
\begin{enumerate}
	\item[a)] If $us\not \in W^J $ then $P_{u,v}^{J,q}(q)=0$;
	\item[b)] if $us\in W^J$ then $P_{us,v}^{J,q}(q)=P^{J,q}_{u,v}(q)$; 
	\item[c)] if $\mu(u,v)\ne 0$ then $D_R(v)\subseteq D_R(u)$ and $D_L(v)\subseteq D_L(u)$.
\end{enumerate}
\end{proposition}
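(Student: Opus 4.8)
The plan is to deduce all three parts from the defining relation of Proposition~\ref{P:DefQuot} together with two standard facts about ordinary Kazhdan--Lusztig polynomials. The first is the lifting property of Bruhat order: if $x\le v$ and $s\in D_R(v)$ then $xs\le v$. The second is the well-known right-descent invariance $P_{x,v}(q)=P_{xs,v}(q)$, valid for every $s\in D_R(v)$ (see, e.g., \cite{Bjorner2005}); adopting the convention $P_{x,v}=0$ whenever $x\not\le v$, the lifting property shows this invariance in fact holds for \emph{all} $x\in W$. I will also need the standard structural lemma for quotients: for $u\in W^J$ and $s\in S$, either $us\in W^J$, or else $us=tu$ for a unique $t\in J$, and in the latter case $l(us)=l(u)+1$.

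For part (b), Proposition~\ref{P:DefQuot} gives $P_{u,v}^{J,q}=\sum_{w\in W_J}(-1)^{l(w)}P_{wu,v}$. Applying the invariance with $x=wu$ to each summand, and using $(wu)s=w(us)$, yields $P_{wu,v}=P_{w(us),v}$ for every $w\in W_J$. Since $us\in W^J$ by hypothesis, summing over $W_J$ and invoking Proposition~\ref{P:DefQuot} a second time (now for the pair $us\le v$) identifies the result as $P_{us,v}^{J,q}$, giving $P_{u,v}^{J,q}=P_{us,v}^{J,q}$. For part (a), the structural lemma applied to $u\in W^J$ with $us\notin W^J$ produces $t\in J$ with $us=tu$. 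Starting again from $P_{u,v}^{J,q}=\sum_{w\in W_J}(-1)^{l(w)}P_{wu,v}$, the invariance gives $P_{wu,v}=P_{w(us),v}=P_{(wt)u,v}$ for each $w$. I then pair the index $w$ with $wt$: since $t\in J$ is a generator, $w\mapsto wt$ is a fixed-point-free involution of $W_J$ reversing the parity of $l(\cdot)$, so the contributions $(-1)^{l(w)}P_{wu,v}$ and $(-1)^{l(wt)}P_{wtu,v}$ are equal in absolute value and opposite in sign. Hence the whole alternating sum collapses to $0$, i.e. $P_{u,v}^{J,q}=0$.

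For part (c), suppose $\mu(u,v)\ne 0$; then $u<v$, $l(v)-l(u)$ is odd, and $\mu(u,v)$ is the coefficient of $q^{(l(v)-l(u)-1)/2}$ in $P_{u,v}$. Fix $s\in D_R(v)$ and suppose, for contradiction, that $s\notin D_R(u)$, so $us>u$ and $l(us)=l(u)+1$. The invariance gives $P_{u,v}=P_{us,v}$, but $\deg P_{us,v}\le \tfrac{1}{2}(l(v)-l(us)-1)=\tfrac{1}{2}(l(v)-l(u)-1)-\tfrac12$, which lies strictly below the degree $\tfrac12(l(v)-l(u)-1)$ computing $\mu(u,v)$; since degrees are integers, the coefficient at that threshold vanishes, forcing $\mu(u,v)=0$, a contradiction. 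This proves $D_R(v)\subseteq D_R(u)$, and the inclusion $D_L(v)\subseteq D_L(u)$ follows by applying the same argument to $u^{-1},v^{-1}$ via the symmetries $P_{u,v}=P_{u^{-1},v^{-1}}$, $\mu(u,v)=\mu(u^{-1},v^{-1})$ and $D_L(x)=D_R(x^{-1})$.

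The main obstacle I anticipate is making the two background inputs watertight: verifying that the right-descent invariance $P_{x,v}=P_{xs,v}$ extends to all $x$ under the zero convention (a short consequence of lifting), and pinning down the exact form $us=tu$ with $t\in J$ in the structural lemma, since this identity is precisely what powers the sign-cancellation in part (a). Once these are in place, the degree estimate in part (c) and the reindexing in parts (a) and (b) are routine.
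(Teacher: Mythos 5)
Your proof is correct, but it takes a genuinely different route from the paper's for parts a) and b). The paper works entirely inside the parabolic recursion of Proposition \ref{P:KLFromDesc}: for a) it observes that $us\notin W^J$ kills the $\widetilde P_{u,v}$ term and then applies induction on $l(v)-l(u)$ to annihilate every $P^{J,q}_{u,w}$ appearing in $\widetilde M_{u,v}$, and for b) it simply cites the adaptation of the argument of \cite[Proposition 5.1.8]{Bjorner2005} to that recursion. You instead push everything through the alternating sum of Proposition \ref{P:DefQuot}, importing the ordinary right-descent invariance $P_{x,v}=P_{xs,v}$ (valid for all $x$ under the zero convention, via the lifting property) and, for a), Deodhar's lemma $us=tu$ with $t\in J$, so that the whole sum cancels under the sign-reversing involution $w\mapsto wt$ of $W_J$. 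This buys a non-inductive and structurally transparent proof of a) --- the vanishing appears as an exact pairwise cancellation rather than being propagated through the recursion --- at the price of two external inputs that the paper's self-contained recursive argument does not need; your part c) is essentially the paper's degree argument, run on the ordinary rather than the parabolic polynomials. One caveat, shared with the paper: the degree estimate in c) requires $us<v$ (for $us=v$ one has $P_{us,v}=1$ of degree $0$, not $\le -\tfrac12$), so the inclusion $D_R(v)\subseteq D_R(u)$ really needs $l(v)-l(u)>1$; indeed $u=\epsilon$, $v=s$ gives $\mu(u,v)=1$ with $D_R(v)\not\subseteq D_R(u)$. This is a defect of the statement as written, not of your argument relative to the paper's.
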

\begin{proof}
If $us\not \in W^J$ then by Proposition \ref{P:KLFromDesc} we have 
$$
P_{u,v}^{J,q}(q)=-\sum_{u\le w<vs; ws<w}\mu(w,vs)q^{\frac{l(w,v)}{2}}P^{J,q}_{u,w}(q)
$$
The sum may be empty or we can apply induction on $l(v)-l(u)$ and have $P_{u,w}^{J,q}(q)=0$. In both cases $P_{u,v}^{J,q}(q)=0$.
For $b)$ use the same arguments as in the proof of  \cite[Proposition 5.1.8]{Bjorner2005}. For the first part of $c)$ use $a)$, $b)$  and the property that $P_{u,v}^{J,q}(q)$ has maximal degree. For the second part of $c)$ use the identity $P_{u,v}(q)=P_{u^{-1},v^{-1}}(q)$ (see \cite[Exercise 5.12]{Bjorner2005}) and Corollary \ref{C:MuEqual}.
\end{proof}
In the rest of the paper we will consider parabolic Kazhdan--Lusztig polynomials of type $q$. Therefore we will write $P_{u,v}^J$ instead of $P_{u,v}^{J,q}$.

Let $(W,S)$ be any Coxeter system and $t$ be a reflection in $W$. Following Marietti (\cite{Marietti2002}, \cite{Marietti2006} and \cite{Marietti2010}), we say that $t$ is a \emph{boolean reflection} if it admits a {\em boolean expression}, which is, by definition, a reduced expression of the form $s_1\cdots s_{n-1}s_ns_{n-1}\cdots s_1$ with $s_k\in S$, for all $k\in\{1,\dots n\}$ and $s_i\ne s_j$ if $i\ne j$. We say that $u\in W$  is a {\em boolean element} if $u$ is smaller than a boolean reflection in the Bruhat order. Let $\overline v$ be a reduced word of a boolean element and $s\in S$, we denote by $\overline v(s)$ the number of occurrences of $s$ in $\overline v$.

Given a Coxeter system $(W,S)$, the Coxeter graph of $W$ is a graph whose vertex set is $S$ and two vertices $s$, $s'$ are joined by an edge if $ss'\ne s's$. We label this edge with $m(s,s')$, the smallest positive integer  such that $(ss')^{m(s,s')}=\epsilon$ ($m(s,s')=\infty$ if there is no such integer). We say that $W$ is a \emph{ tree-Coxeter group} if its Coxeter graph is a tree.

\section{Preliminary results}
\label{S:Preliminary}
In this section we give some preliminary lemmas which are needed to prove the main theorem in the next section.
For any generator $s_i\in S$  we denote by $\Com(s_i)$ the subset of $S$ which contains all elements commuting with $s_i$ different from $s_i$, and by $S^i=S\setminus\{s_i\}$. 

\begin{lemma}
\label{L:11}
Let $u,v\in W^J$ such that $s_iu,s_iv\in W_{S^i}^J$ (i.\ e.\ there exist reduced words for $u,v$ starting with $s_i$ and with no other occurrences of $s_i$). Then 
$$
P_{u,v}^{J}=P_{s_iu,s_iv}^{J\cap \Com(s_i)}.
$$
\end{lemma}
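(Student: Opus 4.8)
The plan is to prove the identity by induction on $l(u,v)$, running the recursion of Proposition~\ref{P:KLFromDesc} on both sides simultaneously. Throughout write $\hat u=s_iu$, $\hat v=s_iv$ and $J_c=J\cap\Com(s_i)$; the hypothesis forces $s_i\notin J$, so $J\subseteq S^i$ and $J_c\subseteq S^i$. Since $\hat u,\hat v\in W_{S^i}$, the ordinary Kazhdan--Lusztig polynomials and $\mu$-coefficients for elements of $W_{S^i}$ agree whether computed in $W_{S^i}$ or in $W$, so the right-hand side $P^{J_c}_{\hat u,\hat v}$ is unambiguous. The base case $u=v$ is immediate, both sides being $1$.

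The technical core, and the step I expect to be the main obstacle, is the following purely Coxeter-combinatorial fact: if $x\in W_{S^i}$ and $r\in S$ does not commute with $s_i$, then $r\notin D_L(s_ix)$. Indeed $l(s_ix)=l(x)+1$ gives $s_i\in D_L(s_ix)$; were $r$ also a left descent, then $\{r,s_i\}\subseteq D_L(s_ix)$ would force $s_ix$ to admit a reduced word beginning $s_irs_i$ (reduced since $m(r,s_i)\ge 3$), and cancelling the leading $s_i$ would yield a reduced word $rs_i\cdots$ for $x$ containing $s_i$, contradicting $x\in W_{S^i}$. Applying this with $x=\hat u s$ (for $s\in S^i$) shows that membership $us\in W^J$ is governed only by the generators of $J$ commuting with $s_i$: for $r\in J_c$ one has $r\cdot us=s_i(r\hat u s)$, whence $l(r\,us)>l(us)\iff l(r\,\hat u s)>l(\hat u s)$, while for $r\in J\setminus\Com(s_i)$ the inequality $l(r\,us)>l(us)$ holds automatically. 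Consequently
$$
us\in W^J \iff \hat u s\in (W_{S^i})^{J_c},
$$
and likewise for $vs$, together with the trivial $us<u\iff\hat u s<\hat u$. This is precisely what makes the induction close, and it explains why the parabolic set after peeling is $J_c$ and not $J\cap S^i$. It also forces me to run the induction with the \emph{weakened} hypothesis $\hat u,\hat v\in(W_{S^i})^{J_c}$ rather than $s_iu,s_iv\in W_{S^i}^{J}$, since the full condition $\hat u\in W^{J}$ need not survive passage to $us$, whereas $\hat u\in(W_{S^i})^{J_c}$ does.

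For the inductive step, from $u<v$ the lifting property gives $\hat u<\hat v$, so $\hat v\neq\epsilon$ and I pick $s\in D_R(\hat v)\subseteq S^i$; as $vs=s_i(\hat v s)$ with $\hat v s\in W_{S^i}$, also $s\in D_R(v)$, and I apply Proposition~\ref{P:KLFromDesc} to $P^J_{u,v}$ and to $P^{J_c}_{\hat u,\hat v}$ with this common $s$. For the $\widetilde P$-terms the three cases ($us<u$; $u<us\in W^J$; $u<us\notin W^J$) match the three cases for $\hat u,\hat u s$ by the equivalences above, and the inductive hypothesis applied to the pairs $(us,vs)$ and $(u,vs)$ (which satisfy the weakened hypothesis) gives $P^J_{us,vs}=P^{J_c}_{\hat u s,\hat v s}$ and $P^J_{u,vs}=P^{J_c}_{\hat u,\hat v s}$, with the same powers of $q$. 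Hence the two $\widetilde P$-terms coincide.

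Finally, for the correction terms I set up a length-preserving bijection $w\leftrightarrow w'=s_iw$ between the indexing sets. If $w$ indexes a nonzero summand of $\widetilde M_{u,v}$ then $\mu(w,vs)\neq 0$, so Proposition~\ref{P:PropertiesKL}(c) together with $s_i\in D_L(vs)$ gives $s_i\in D_L(w)$; since $w<vs$ and a reduced word of $vs$ carries a single $s_i$ at its front, the subword characterization of Bruhat order forces $w=s_iw'$ with $w'\in W_{S^i}$. The conditions $u\le w<vs$, $ws<w$, $w\in W^J$ then translate, by the lifting property and the equivalence $w\in W^J\iff w'\in(W_{S^i})^{J_c}$, into $\hat u\le w'<\hat v s$, $w's<w'$, $w'\in(W_{S^i})^{J_c}$; moreover $\mu(w,vs)=\mu(w',\hat v s)$ and $l(w,v)=l(w',\hat v)$, using that $s_i$ is a common left descent of $w$ and $vs$, and $P^J_{u,w}=P^{J_c}_{\hat u,w'}$ by the inductive hypothesis since $l(u,w)<l(u,v)$. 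Therefore $\widetilde M_{u,v}=\widetilde M_{\hat u,\hat v}$, and subtracting yields $P^J_{u,v}=P^{J_c}_{\hat u,\hat v}$, completing the induction.
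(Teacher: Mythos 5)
Your proof follows the same route as the paper's: induct, pick a common right descent $s\ne s_i$, run the recursion of Proposition~\ref{P:KLFromDesc} on both sides, and match terms using the fact that $D_L(s_ix)\subseteq\{s_i\}\cup\Com(s_i)$ for $x\in W_{S^i}$ (which you actually prove, whereas the paper merely asserts it) together with the bijection $w\mapsto s_iw$ between the relevant intervals. Two small repairs are needed. First, you induct on $l(u,v)$, but in the case $us<u$ the pair $(us,vs)$ satisfies $l(us,vs)=l(u,v)$, so the inductive hypothesis is not available for the term $P^{J}_{us,vs}$; induct on $l(v)$ instead (as the paper does), which strictly decreases for all of the pairs $(us,vs)$, $(u,vs)$ and $(u,w)$ you need. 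Second, Proposition~\ref{P:PropertiesKL}(c) can fail when $w\triangleleft vs$ (e.g.\ $\mu(\epsilon,s)=1$ while $D_L(s)\not\subseteq D_L(\epsilon)$), so it does not by itself yield $s_i\in D_L(w)$ for covering pairs; this is harmless here, since $s_i\in D_L(u)$ gives $s_i\le u\le w$, hence $w\notin W_{S^i}$, and the subword argument you already invoke then forces $s_i\in D_L(w)$ with no appeal to $\mu$ at all. With these adjustments the argument is complete, and your observation that the induction must be carried with the weakened hypothesis $s_iu,s_iv\in (W_{S^i})^{J\cap\Com(s_i)}$ (rather than membership in $W^J$) is a genuine point of care that the paper glosses over.
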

\begin{proof}
The statement is trivial if $l(v)=1$. Suppose that $l(v)>1$. Then there exists $s_j\in D_R(v)$, $j\ne i$. Note that  for any $w\in W$ with $s_iw\in W_{S^i}$ we have that $D_L(w)\subseteq \{s_i\}\cup (S\cap \Com(s_i))$, more precisely $D_L(w)=\{s_i\}\cup (D_L(s_iw)\cap \Com(s_i))$. Therefore  $us_j\in W^J$ if and only if $s_ius_j\in W^{J\cap \Com(s_i)}$. In this case, by Proposition \ref{P:KLFromDesc} we have
\begin{align*}
P_{u,v}^{J}=&q^c P^{J}_{us_j,vs_j}+q^{1-c}P^{J}_{u,vs_j}-\sum_{{u\le w\le vs_j} \atop {ws_j<w}}\mu(w,vs_j)q^{\frac{l(w,vs_j)}{2}}P^{J}_{u,w}\\
=&q^c P^{J\cap \Com(s_i)}_{s_ius_j,s_ivs_j}+q^{1-c}P^{J\cap \Com(s_i)}_{s_iu,s_ivs_j}+\\
&-\sum_{{s_iu\le s_iw\le s_ivs_j} \atop {s_iws_j<s_iw}}\mu(s_iw,s_ivs_j)q^{\frac{l(s_iw,s_ivs_j)}{2}}P^{J\cap \Com(s_i)}_{s_iu,s_iw}=P_{s_iu,s_iv}^{J\cap \Com(s_i)}
\end{align*}
by induction, where $c$ is $0$ or $1$. The equalities hold since the map from $[u,v]^J$ to $[s_iu,s_iv]^{J\cap \Com(s_i)}$ given by left-multiplication by $s_i$ is an isomorphism of posets.
\end{proof}

\begin{lemma}
\label{L:10}
Let $u,v\in W^J$ be such that $u,s_iv\in W_{S^i}$ (i.\ e.\ there are no occurrences of $s_i$ in any reduced expression of $u$ and $s_iv$). Then
$$
P_{u,v}^{J}=\left\{\begin{array}{ll}P_{u,s_iv}^{J} & \text{if $s_iv\in W^J$}\\ 0 & \text{otherwise}\end{array}\right.
$$
\end{lemma}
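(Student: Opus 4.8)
The plan is to reduce everything to ordinary Kazhdan--Lusztig polynomials via Proposition \ref{P:DefQuot} and then exploit the hypothesis that neither $u$ nor $w:=s_iv$ involves $s_i$. Write $v=s_iw$; since $w\in W_{S^i}$ has no occurrence of $s_i$ we get $s_i\notin D_L(w)$, hence $l(v)=l(w)+1$ and $s_i\in D_L(v)$. Moreover $v\in W^J$ forces $s_i\notin J$, so $W_J\subseteq W_{S^i}$ and every product $xu$ with $x\in W_J$ again lies in $W_{S^i}$; in particular $s_i(xu)>xu$. By Proposition \ref{P:DefQuot} we have $P^J_{u,v}=\sum_{x\in W_J}(-1)^{l(x)}P_{xu,v}$, so it suffices to understand each ordinary polynomial $P_{xu,v}$.

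The key step, which I expect to be the main obstacle, is to show that $P_{xu,v}=P_{xu,w}$ for every $x\in W_J$. For this I would apply the left-handed form of the descent recursion --- that is, Proposition \ref{P:KLFromDesc} with $J=\emptyset$ combined with the symmetry $P_{a,b}=P_{a^{-1},b^{-1}}$ --- to the pair $(xu,v)$ using $s_i\in D_L(v)$. Writing $y=xu$, this yields
\[
P_{y,v}=qP_{s_iy,w}+P_{y,w}-\sum_{\substack{y\le z<w\\ s_iz<z}}\mu(z,w)\,q^{\frac{l(v)-l(z)}{2}}P_{y,z}.
\]
Both correction terms then vanish for structural reasons: $s_iy$ contains an occurrence of $s_i$ while $w\in W_{S^i}$ does not, so $s_iy\not\le w$ and $P_{s_iy,w}=0$; and any $z<w$ is a subword of an $s_i$-free reduced word of $w$, hence $z\in W_{S^i}$ and $s_i\notin D_L(z)$, so the indexing condition $s_iz<z$ is never met and the sum is empty. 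Thus the recursion collapses to $P_{y,v}=P_{y,w}$, and summing with signs gives $P^J_{u,v}=\sum_{x\in W_J}(-1)^{l(x)}P_{xu,w}$.

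It remains to evaluate this last sum according to whether $w\in W^J$. If $w\in W^J$, then Proposition \ref{P:DefQuot} applied to the pair $(u,w)$ identifies the sum with $P^J_{u,w}=P^J_{u,s_iv}$, as claimed. If $w\notin W^J$, choose $t\in J\cap D_L(w)$ and use the fixed-point-free, sign-reversing involution $x\mapsto tx$ of $W_J$: for a pair $\{x_0,tx_0\}$ with $tx_0>x_0$, the identity $l(\cdot u)=l(\cdot)+l(u)$ on $W_J\cdot W^J$ (cited after Proposition \ref{P:DefQuot}) gives $l(tx_0u)=l(x_0u)+1$, so $t\notin D_L(x_0u)$; since $t\in D_L(w)$, the left form of \cite[Prop. 5.1.8]{Bjorner2005} (the $J=\emptyset$ case of Proposition \ref{P:PropertiesKL}(b), transported by $P_{a,b}=P_{a^{-1},b^{-1}}$) yields $P_{x_0u,w}=P_{tx_0u,w}$. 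The two members of each pair therefore cancel and the whole sum vanishes, giving $P^J_{u,v}=0$. The delicate points are the two vanishing observations in the middle paragraph, both of which rely precisely on the hypothesis that $u$ and $s_iv$ avoid $s_i$; once these are secured, the final case analysis is routine and, notably, no induction on length is needed.
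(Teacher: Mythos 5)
Your argument is correct, but it is a genuinely different proof from the one in the paper. The paper proceeds by induction on $l(v)$ entirely inside the parabolic recursion of Proposition \ref{P:KLFromDesc}: it picks a right descent $s_j\ne s_i$ of $v$, matches the two main terms with those for $(u,s_iv)$, and then kills the $\mu$-correction terms by a case analysis that invokes Lemma \ref{L:11}, Corollary \ref{C:MuEqual} and part (c) of Proposition \ref{P:PropertiesKL}; the case $s_iv\notin W^J$ is handled there by showing each $\mu(w,vs_j)P^J_{u,w}$ vanishes. You instead work on the left at the distinguished generator $s_i$ after unfolding $P^J_{u,v}$ into the alternating sum of ordinary polynomials via Proposition \ref{P:DefQuot}: a single application of the left-handed descent recursion, with both correction terms dying for support reasons ($s_iy\not\le s_iv$ since supports of reduced words are well defined, and no $z\le s_iv$ has $s_i$ as a left descent), gives $P_{xu,v}=P_{xu,s_iv}$ with no induction at all, and the dichotomy in the statement then falls out of Proposition \ref{P:DefQuot} again (when $s_iv\in W^J$) or a sign-reversing involution $x\mapsto tx$ on $W_J$ with $t\in J\cap D_L(s_iv)$ (when $s_iv\notin W^J$). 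Your route buys a shorter, induction-free argument and a cleaner explanation of why the polynomial vanishes in the second case; the paper's route stays uniform with the proof of Lemma \ref{L:11} and never leaves the parabolic setting. Two points you should make explicit if you write this up: the degenerate cases $xu\not\le v$ (where both sides of your key identity are $0$ because $s_iv\le v$, and where the lifting property guarantees $x_0u\le s_iv\Leftrightarrow tx_0u\le s_iv$ in the involution step), and the standing fact that the set of letters occurring in a reduced word is an invariant of the group element, which underlies both vanishing claims.
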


\begin{proof}
If $l(v)=1$ there is nothing to prove. Let we suppose $l(v)>1$ and let $s_j\in D_R(v)$, $s_j\ne s_i$. If $us_j\not\in W^J$ the claim is trivial by Proposition \ref{P:PropertiesKL}. Therefore we may assume $us_j\in W^J$.

Suppose that $s_iv\in W^J$. Then by Proposition \ref{P:KLFromDesc} we get

\begin{align*}
P_{u,v}^{J}=&q^c P^{J}_{us_j,vs_j}+q^{1-c}P^{J}_{u,vs_j}-\sum_{{u\le w\le vs_j} \atop {ws_j<w}}\mu(w,vs_j)q^{\frac{l(w,vs_j)}{2}}P^{J}_{u,w}\\
=&q^c P^{J}_{us_j,s_ivs_j}+q^{1-c}P^{J}_{u,s_ivs_j}-\sum_{{u\le s_iw\le s_ivs_j} \atop {s_iws_j<s_iw}}\mu(s_iw,s_ivs_j)q^{\frac{l(s_iw,s_ivs_j)}{2}}P^{J}_{u,s_iw}\\ =&P_{u,s_iv}^{J}
\end{align*}
where $c$ is $0$ or $1$.
The equalities hold  by induction on $l(vs_j)$: if $w\in  W_{S^i}$ then $\mu(w,vs_j)$ is $0$ since by induction either $P_{w,vs_j}^{J}=0$ or $P^{J}_{w,vs_j}=P^{J}_{w,s_ivs_j}$ and therefore $P_{w,vs_j}^{J}$ does not  have the maximum degree. Otherwise, if $s_iw\notin W^J$ then $P_{u,w}^{J}=0$ by induction, else $P_{u,w}^{J}=P_{u,s_iw}^{J}$ and $\mu(w,vs_j)= \mu(s_iw,s_ivs_j)$ by Lemma  \ref{L:11} and Corollary \ref{C:MuEqual}. 

Finally, if $s_iv\not \in W^J$ (and we may assume also that $s_ivs_j\notin W^J$ except in the case $v=s_is_j$ and $u=\epsilon$ which is trivial) then by induction
$$
P_{u,v}^{J}=-\sum_{{u\le w\le vs_j} \atop {ws_j<w}}\mu(w,vs_j)q^{\frac{l(w,vs_j)}{2}}P^{J}_{u,w}.
$$

Fix $w\in W^J$ with $u\le w\le vs_j$ and $ws_j<w$. We prove that $\mu(w,vs_j)P^{J}_{u,w}=0$.
If $w\in W_{S^i}$ then $\mu(w,vs_j)=0$ by induction. Otherwise, if $s_iw\in W_{S^i}$ then by Lemma \ref{L:11} we have $\mu(w,vs_j)=\mu(s_iw,s_ivs_j)$ but $\mu(s_iw,s_ivs_j)=0$ since both $s_ivs_j\notin W^J$ and  $s_iw\in W_{S^i}$ imply that   $D_L(s_ivs_j)\not\subseteq D_L(s_iw)$ (see $c)$ in Proposition \ref{P:PropertiesKL}).
\end{proof}

We now introduce a family of numbers which are used in the next section.
The \emph{Catalan triangle } is a triangle of numbers formed in the same manner as Pascal's triangle, except that no number may appear on the left of the first element (see \cite[sequence A008313]{OEIS}).
$$
\begin{array}{cccccccccc}
1\\
& 1\\
1 & & 1\\
& 2 & & 1\\
2 & & 3 & & 1\\
& 5 & & 4 & & 1\\
5 & & 9 & & 5 & & 1\\
& 14 & & 14 & & 6 & & 1\\
14 & & 28 & & 20 & & 7 & & 1\\
& 42 & & 48 & & 27 & & 8 & & 1
\end{array}
$$
Let $h\ge 1$. We set
 $$f_h(q)=\sum_{i=0}^{[\frac{h}{2}]} C(h,i)q^{[\frac{h}{2}]-i}$$
  where $[h]$ denotes the integer part of $h$ and $C(h,i)$ is the $i$-th number in the $h$-th row (here we start the enumeration from $0$). For example $f_4(q)=2q^2+3q+1$; $f_7(q)=14q^3+14q^2+6q+1$. We denote by $\mu(f_h(q))$ the coefficient of $q^{\frac{h}{2}}$ in $f_h(q)$. Therefore $\mu(f_h(q))=0$ if $h$ is odd. Then we have the following easy result, whose proof we omit.
\begin{lemma}
\label{L:Propf}
For all $h\ge 0$, 
$$
f_h(q)(1+q)-\mu(f_h(q))q^{\frac{h}{2}+1}=f_{h+1}(q).
$$
\end{lemma}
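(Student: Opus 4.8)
The plan is to reduce the identity to the defining recurrence of the Catalan triangle and then verify it coefficient by coefficient, treating the two parities of $h$ separately. First I would make the recurrence explicit. Writing $C(h,i)=0$ whenever $i<0$ or $i>[h/2]$, the staggered Pascal-type construction of the triangle gives, for the passage from an even row to the next odd row,
\[
C(2m+1,i)=C(2m,i)+C(2m,i+1)\qquad(0\le i\le m),
\]
and, for the passage from an odd row to the next even row,
\[
C(2m+2,i)=C(2m+1,i-1)+C(2m+1,i)\quad(1\le i\le m+1),\qquad C(2m+2,0)=C(2m+1,0),
\]
the last equality being precisely the rule that ``no number may appear on the left of the first element''. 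I would verify these against the displayed array on a couple of rows before using them.

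Next I would treat the case $h=2m$ even. Here $[h/2]=[(h+1)/2]=m$ and $\mu(f_h(q))=C(2m,0)$. Expanding, $f_{2m}(q)(1+q)=\sum_i C(2m,i)q^{m-i}+\sum_i C(2m,i)q^{m-i+1}$; the unique term of degree $m+1$ is $C(2m,0)q^{m+1}$, and subtracting $\mu(f_h(q))q^{h/2+1}=C(2m,0)q^{m+1}$ removes it, leaving a polynomial of degree $m$. Reading off the coefficient of $q^{m-i}$ for $0\le i\le m$ yields $C(2m,i)+C(2m,i+1)$, which by the first recurrence equals $C(2m+1,i)$, that is, exactly the coefficient of $q^{m-i}$ in $f_{2m+1}(q)$.

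For $h=2m+1$ odd one has $\mu(f_h(q))=0$, so the claim reduces to $f_{2m+1}(q)(1+q)=f_{2m+2}(q)$, where the degree now genuinely rises from $m$ to $m+1$. Writing the coefficient of $q^{m+1-i}$ on the left as $C(2m+1,i-1)+C(2m+1,i)$ and comparing with $C(2m+2,i)$, the interior cases $1\le i\le m+1$ follow from the even recurrence, while the boundary case $i=0$ reduces to $C(2m+2,0)=C(2m+1,0)$ and the top case $i=m+1$ to $C(2m+2,m+1)=C(2m+1,m)$, both being instances of the recurrence together with the vanishing convention; the case $h=0$ is the base of the even branch. The only delicate point, and the place I would be most careful, is the behaviour at the left boundary: the reflecting edge of the triangle makes the leftmost entries obey a different rule from the interior ones, and it is exactly this asymmetry that the correction term $\mu(f_h(q))q^{h/2+1}$ is designed to absorb, being needed precisely when $h$ is even (where passing to $f_{h+1}$ does not raise the degree) and absent when $h$ is odd.
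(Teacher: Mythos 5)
Your proof is correct: the recurrences you extract for the staggered triangle (including the reflecting left boundary $C(2m+2,0)=C(2m+1,0)$) match the displayed array, and the coefficient-by-coefficient check in the two parities, with the correction term $\mu(f_h(q))q^{h/2+1}$ absorbing the leading term exactly when $h$ is even, is exactly the intended argument. The paper states this lemma as an "easy result, whose proof we omit," so there is no proof to compare against; your direct verification is the natural one and fills that gap.
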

Note that in the first column we find the classical Catalan numbers (see \cite[sequence A008313]{OEIS} for details).

\section{Parabolic Kazhdan--Lusztig polynomials}
\label{S:KLMain}
Let $(W,S)$ be a tree-Coxeter group. Let $t=s_1\cdots s_{n-1}s_ns_{n-1}\cdots s_1$ be a boolean reflection. Consider the Coxeter graph $G$ and  represent it as a rooted tree with root the vertex corresponding to the generator $s_n$. In this paper all the roots will be depicted on the right of their graphs.  In Figure \ref{F:CoxeterGraphDtilde} we give the Coxeter graph of the affine Weyl group $\widetilde D_{11}$.

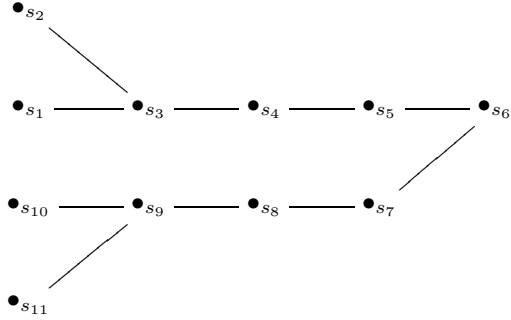
\begin{figure}[htb]
$$
\xymatrix{
 \bullet_{s_2} \ar@{-}[dr]\\
\bullet_{s_1}\ar@{-}[r] &\bullet_{s_3} \ar@{-}[r] & \bullet_{s_4}\ar@{-}[r] &\bullet_{s_5}\ar@{-}[r] & \bullet_{s_6}\\ 
\bullet_{s_{10}}\ar@{-}[r] &\bullet_{s_9} \ar@{-}[r] & \bullet_{s_8}\ar@{-}[r] &\bullet_{s_7}\ar@{-}[ur] \\ 
\bullet_{s_{11}}\ar@{-}[ur]
}
$$
\caption{The Coxeter graph of $\widetilde D_{11}$ with root $s_6$, corresponding to reflection $t=s_1s_2\cdots s_5s_{10}s_{11}s_9s_8s_7s_6s_7s_8s_9s_{11}s_{10}s_5\dots s_2s_1$.}  \label{F:CoxeterGraphDtilde} 
\end{figure}

According to such rooted graph we say that  $s_j$ is on the right (respectively on the left) of $s_i$ if and only if there exists an edge joining them and the only path joining $s_i$ to $s_n$ contains $s_j$.

Let $u,v\in W$ be such that $u,v\le t$. Let $\overline u,\overline v$ be the unique reduced expressions of $u,v$ satisfying the following properties
\begin{itemize}
	\item $\overline v$ is a subword of $s_1\cdots, s_{n-1}s_{n}s_{n-1}\cdots s_1$ and if $i$ is such that $\overline v(s_i)=1$ and  $\overline v(s_j)=0$, where $s_j$ is the only element on the right of $s_i$, then we choose the subword with $s_i$ in the leftmost admissible position;
	\item $\overline u$ is a subword of $\overline v$ and if $i$ is such that $\overline v(s_i)=1$ and  $\overline v(s_j)=0$, we apply the same above rule.
\end{itemize}

Here we give an example. Let $t=s_1s_2\cdots s_5s_{10}s_{11}s_9s_8s_7s_6s_7s_8s_9s_{11}s_{10}s_5\dots s_2s_1$ in $\widetilde D_{11}$, see Figure \ref{F:CoxeterGraphDtilde}. Let $v={s_4s_5s_{10}s_{11}s_6s_7s_8s_9s_5s_4s_2s_1}$ and $u=s_8s_6s_1$ then $\overline v=s_1s_2s_4s_5s_{10}s_{11}s_6s_7s_8s_9s_5s_4$ and $\overline u= s_1s_6s_8$.

Now we give a graphical representation of the pair $(\overline v,\overline u)$. We start from the rooted tree of the Coxeter graph and we substitute for each vertex a table with one column and two rows. In the first row we write $\overline v(s_j)$ ($s_j$ is the element associated to the vertex); in the second row we write $\overline u(s_j)$. In the case $\overline v(s_j)=1$, it is possible that $s_j$ is on the left or on the right of $s_n$ (the root) as subword of $t$. We distinguish the two cases by writing $1_l$ if $s_j$ is on the left (as subword) of $s_n$, and $1_r$ otherwise. By definition we write $1_l$ in the root $s_n$ if $\overline v(s_n)\ne 0$. We apply the same rule to the second row. Moreover, in the first row, we use capital letter $R$ if the second row of the column to the right does not contain $0$.

We mark the column corresponding to $s_j$ with $\circ$ if $j\in J$ and with $\times$ if $j\not\in J$. Finally, if a vertex $s_j$ has only one vertex on the left then we write the two corresponding columns in a unique table. In Figure \ref{F:DiagD11} we give the graphical representation of the pair $(\overline v,\overline u)$ in $\widetilde D_{11}$, with $J=\{s_5,s_7\}$.

\begin{figure}[hbt]
$$
\xymatrix{
{\begin{tabular}{|c|}\multicolumn{1}{c}{$\times$}\\ \hline $1_l$\\ \hline $0$ \\ \hline\end{tabular}} \ar@{-}[rd] \\
{\begin{tabular}{|c|}\multicolumn{1}{c}{$\times$}\\ \hline $1_l$\\\hline $1_l$ \\\hline\end{tabular}} \ar@{-}[r]
&
{\begin{tabular}{|c|c|c|}\multicolumn{1}{c}{$\times$} &\multicolumn{1}{c}{$\times$} &\multicolumn{1}{c}{$\circ$} \\ \hline $0$ & $2 $ & $2$ \\\hline $0$ & $0$ & $0$ \\\hline\end{tabular}}\ar@{-}[r]
&
{\begin{tabular}{|c|}\multicolumn{1}{c}{$\times$}\\ \hline $1_l$\\\hline $1_l$ \\\hline\end{tabular}}\\
{\begin{tabular}{|c|}\multicolumn{1}{c}{$\times$}\\ \hline $1_l$\\\hline $0$ \\\hline\end{tabular}}\ar@{-}[r]
&
{\begin{tabular}{|c|c|c|}\multicolumn{1}{c}{$\times$} &\multicolumn{1}{c}{$\times$} &\multicolumn{1}{c}{$\circ$} \\ \hline $1_R$ & $1_r$ & $1_R$ \\\hline $0$ & $1_r$ & $0$ \\ \hline \end{tabular}}\ar@{-}[ru] \\
{\begin{tabular}{|c|}\multicolumn{1}{c}{$\times$}\\ \hline $1_l$\\\hline $0$ \\\hline\end{tabular}}\ar@{-}[ru]
}
$$
\caption{Diagram of $(\overline v=s_1s_2s_4s_5s_{11}s_{10}s_6s_7s_8s_9s_5s_4,\overline u s_1s_6s_8)$ in $\widetilde D_{11}$.}\label{F:DiagD11}
\end{figure}

In the sequel a symbol $*$ denotes the possibility to have arbitrary entries in the cell. A symbol such as $\not{1_l}$, $\not{0}$, etc.\ means that the value in the cell is not $1_l$, $0$, etc. Moreover we will be interested in subdiagrams of such representations, i.\ e.\ diagrams obtained by deleting one or more columns. Since  the order of the tables from  top to bottom  is not important (while  the order from  left to  right is fundamental), we use the following notation
$$
\xymatrix{
{\left(  \begin{tabular}{|c|}\multicolumn{1}{c}{$*$}\\ \hline $a$\\ \hline $b$ \\ \hline\end{tabular}\right)^n} \ar@{-}[r] &{\begin{tabular}{|c|}\multicolumn{1}{c}{$*$}\\ \hline $c$\\ \hline $d$\\ \hline\end{tabular}}
&  & {\begin{tabular}{|c|}\multicolumn{1}{c}{$*$}\\ \hline $a$\\ \hline $b$ \\ \hline\end{tabular}}\ar@{-}[ddr]\\
{\begin{tabular}{|c|}\multicolumn{1}{c}{$*$}\\ \hline $e$\\ \hline $f$ \\ \hline\end{tabular}}\ar@{-}[ur] & & {\text{to mean}} & {\vdots} \\
{\vdots} & &  &  {\begin{tabular}{|c|}\multicolumn{1}{c}{$*$}\\ \hline $a$\\ \hline $b$ \\ \hline\end{tabular}}\ar@{-}[r] & {\begin{tabular}{|c|}\multicolumn{1}{c}{$*$}\\ \hline $c$\\ \hline $d$\\ \hline\end{tabular}}\\
&  & &{\begin{tabular}{|c|}\multicolumn{1}{c}{$*$}\\ \hline $e$\\ \hline $f$ \\ \hline\end{tabular}}\ar@{-}[ur]\\
& & & {\vdots}
}
$$
where the column with entries $a,b$ is repeated $n$ times.
Now we give all the definitions necessary to  Theorem \ref{T:KLBoolean}.

Fix a diagram $d$. We use the notation $\# d'$, with $d'$ another diagram, to denote the number of subdiagrams of $d$ equal to $d'$.
Given a pair $(\overline v,\overline u)$ in $W$, we let
$$
a_h(\overline u,\overline v)=\#\left. %
\xymatrix{
{\left(\begin{tabular}{|c|}\multicolumn{1}{c}{$*$}\\ \hline $\not 2$\\ \hline $*$ \\ \hline\end{tabular}\right)^n}\ar@{-}[r] &
{\begin{tabular}{|c|}\multicolumn{1}{c}{$\times$}\\ \hline $1_*$\\ \hline $0$\\ \hline\end{tabular}}\\
{\left(\begin{tabular}{|c|}\multicolumn{1}{c}{$*$}\\ \hline $2$\\ \hline $\not 2$ \\ \hline\end{tabular}\right)^{h+1}}\ar@{-}[ur]
}\right. %
\quad+\quad\# \left.%
\xymatrix{
{\left(\begin{tabular}{|c|}\multicolumn{1}{c}{$*$}\\ \hline $\not 2$\\ \hline $*$ \\ \hline\end{tabular}\right)^n} \ar@{-}[r] &
{\begin{tabular}{|c|}\multicolumn{1}{c}{$\times$}\\ \hline $2$\\ \hline $0$\\ \hline\end{tabular}}\\
{\left(\begin{tabular}{|c|}\multicolumn{1}{c}{$*$}\\ \hline $2$\\ \hline $\not 2$ \\ \hline\end{tabular}\right)^{h}}\ar@{-}[ur]
}\right.
$$
$$
+\quad\#\xymatrix{{\left(  \begin{tabular}{|c|}\multicolumn{1}{c}{$*$}\\ \hline $\not 2$\\ \hline $*$ \\ \hline\end{tabular}\right)^n} \ar@{-}[rd]\\
{\begin{tabular}{|c|}\multicolumn{1}{c}{$*$}\\ \hline $1_l$\\ \hline $1_l$ \\ \hline\end{tabular}}\ar@{-}[r] &
{\begin{tabular}{|c|}\multicolumn{1}{c}{$\circ$}\\ \hline $1_*$\\ \hline $0$\\ \hline\end{tabular}}\\
{\left(\begin{tabular}{|c|}\multicolumn{1}{c}{$*$}\\ \hline $2$\\ \hline $\not 2$ \\ \hline\end{tabular}\right)^{h+1}}\ar@{-}[ur]}
+\quad\#\xymatrix{{\left(  \begin{tabular}{|c|}\multicolumn{1}{c}{$*$}\\ \hline $\not 2$\\ \hline $*$ \\ \hline\end{tabular}\right)^n} \ar@{-}[dr]\\
{\begin{tabular}{|c|}\multicolumn{1}{c}{$*$}\\ \hline $1_l$\\ \hline $1_l$ \\ \hline\end{tabular}}\ar@{-}[r] &
{\begin{tabular}{|c|}\multicolumn{1}{c}{$\circ$}\\ \hline $2$\\ \hline $0$\\ \hline\end{tabular}}\\
{\left(\begin{tabular}{|c|}\multicolumn{1}{c}{$*$}\\ \hline $2$\\ \hline $\not 2$ \\ \hline\end{tabular}\right)^{h}}\ar@{-}[ur]
}
+\quad\#\xymatrix{{\left(  \begin{tabular}{|c|}\multicolumn{1}{c}{$*$}\\ \hline $x$\\ \hline $y$ \\ \hline\end{tabular}\right)^n} \ar@{-}[r] &
{\begin{tabular}{|c|}\multicolumn{1}{c}{$\circ$}\\ \hline $1_{R}$\\ \hline $0$\\ \hline\end{tabular}}\\
{\left(\begin{tabular}{|c|}\multicolumn{1}{c}{$*$}\\ \hline $2$\\ \hline $\not 2$ \\ \hline\end{tabular}\right)^{h+1}}\ar@{-}[ur]
}
$$
$$
b_h(\overline u,\overline v)=\#\xymatrix{{\left(  \begin{tabular}{|c|}\multicolumn{1}{c}{$*$}\\ \hline $x$\\ \hline $y$ \\ \hline\end{tabular}\right)^n} \ar@{-}[r] &
{\begin{tabular}{|c|}\multicolumn{1}{c}{$\circ$}\\ \hline $1_{l/r}$\\ \hline $0$\\ \hline\end{tabular}}\\
{\left(\begin{tabular}{|c|}\multicolumn{1}{c}{$*$}\\ \hline $2$\\ \hline $\not 2$ \\ \hline\end{tabular}\right)^{h+1}}\ar@{-}[ur]
}
+\quad\#\xymatrix{{\left(  \begin{tabular}{|c|}\multicolumn{1}{c}{$*$}\\ \hline $x$\\ \hline $y$ \\ \hline\end{tabular}\right)^n} \ar@{-}[r] &
{\begin{tabular}{|c|}\multicolumn{1}{c}{$\circ$}\\ \hline $2$\\ \hline $0$\\ \hline\end{tabular}}\\
{\left(\begin{tabular}{|c|}\multicolumn{1}{c}{$*$}\\ \hline $2$\\ \hline $\not 2$ \\ \hline\end{tabular}\right)^{h}}\ar@{-}[ur]
}
$$
$$
c(\overline u,\overline v)=\#\xymatrix{{
\left(  \begin{tabular}{|c|}\multicolumn{1}{c}{$*$} \\ \hline $x$\\ \hline $y$ \\ \hline\end{tabular}\right)^n} \ar@{-}[r] &
{\begin{tabular}{|c|}\multicolumn{1}{c}{$\circ$}\\ \hline $2$\\ \hline $0$\\ \hline\end{tabular}}}
+\quad\#\xymatrix{{\left(  \begin{tabular}{|c|}\multicolumn{1}{c}{$*$}\\ \hline $x$\\ \hline $y$ \\ \hline\end{tabular}\right)^n}\ar@{-}[r] &
{\begin{tabular}{|c|}\multicolumn{1}{c}{$\circ$}\\ \hline $1_l$\\ \hline $0$\\ \hline\end{tabular}}\\
{\begin{tabular}{|c|}\multicolumn{1}{c}{$*$}\\ \hline $*$\\ \hline $0$ \\ \hline\end{tabular}}\ar@{-}[ur]
}
+\quad\#\xymatrix{{\left(  \begin{tabular}{|c|}\multicolumn{1}{c}{$*$}\\ \hline $x$\\ \hline $y$ \\ \hline\end{tabular}\right)^n}\ar@{-}[r] &
{\begin{tabular}{|c|}\multicolumn{1}{c}{$\circ$}\\ \hline $1_r$\\ \hline $0$\\ \hline\end{tabular}}\\
{\begin{tabular}{|c|}\multicolumn{1}{c}{$*$}\\ \hline $2$\\ \hline $\not 2$ \\ \hline\end{tabular}}\ar@{-}[ur]
}
$$
$$
+\quad\#\xymatrix{{\left(  \begin{tabular}{|c|}\multicolumn{1}{c}{$*$}\\ \hline $x$\\ \hline $y$ \\ \hline\end{tabular}\right)^n}\ar@{-}[r] &
{\begin{tabular}{|c|}\multicolumn{1}{c}{$\circ$}\\ \hline $1_r$\\ \hline $0$\\ \hline\end{tabular}}
}
+\quad\#\xymatrix{{\left(  \begin{tabular}{|c|}\multicolumn{1}{c}{$*$}\\ \hline $x$\\ \hline $y$ \\ \hline\end{tabular}\right)^n} \ar@{-}[r] &
{\begin{tabular}{|c|}\multicolumn{1}{c}{$\circ$}\\ \hline $1_l$\\ \hline $0$\\ \hline\end{tabular}}\\
{\begin{tabular}{|c|}\multicolumn{1}{c}{$*$}\\ \hline $2$\\ \hline $\not 2$ \\ \hline\end{tabular}}\ar@{-}[ur]
}
$$
$$
c'(\overline u,\overline v)=
\#\quad\xymatrix{{\left(  \begin{tabular}{|c|}\multicolumn{1}{c}{$*$}\\ \hline $x'$\\ \hline $y'$ \\ \hline\end{tabular}\right)^n }\ar@{-}[r] &
{\begin{tabular}{|c|}\multicolumn{1}{c}{$\circ$}\\ \hline $2$\\ \hline $1_r$\\ \hline\end{tabular}}
}
+\quad\#\xymatrix{{\left(  \begin{tabular}{|c|}\multicolumn{1}{c}{$*$}\\ \hline $x'$\\ \hline $y'$ \\ \hline\end{tabular}\right)^n} \ar@{-}[r] &
{\begin{tabular}{|c|}\multicolumn{1}{c}{$\circ$}\\ \hline $1_l$\\ \hline $1_l$\\ \hline\end{tabular}}\\
{\begin{tabular}{|c|}\multicolumn{1}{c}{$*$}\\ \hline $2$\\ \hline $\not 2$ \\ \hline\end{tabular}}\ar@{-}[ur]
}
$$
$$c''(\overline u,\overline v)=
\#\quad\xymatrix{{\left(  \begin{tabular}{|c|}\multicolumn{1}{c}{$*$}\\ \hline $x'$\\ \hline $y'$ \\ \hline\end{tabular}\right)^n}\ar@{-}[r] &
{\begin{tabular}{|c|}\multicolumn{1}{c}{$\circ$}\\ \hline $*$\\ \hline $1_l$\\ \hline\end{tabular}}\\
{\begin{tabular}{|c|}\multicolumn{1}{c}{$*$}\\ \hline $2$\\ \hline $1_l$ \\ \hline\end{tabular}}\ar@{-}^{m(s,s')=3}[ur],
}
$$
where $(x,y)\in P_1$, $(x',y')\in P_1\cup P_2$ with $P_1=\{(1_l,0),(1_r,0),(1_r,1_r),(2,1_r)\}$, $P_2=\{(1_R,0),(1_R,1_r),(2,0)\}$. In each diagram  $(x,y)$, $(x',y')$, $(\not 2,*)$ or $( 2,\not 2)$ are not necessarily the same pair for all $n\ge 0$ (or $h\ge 0$) columns.   We can now state the main result of this work.
\begin{theorem}
\label{T:KLBoolean}
Let $J\subseteq S$, $u,v\in W^J$ and set $\overline c(\overline u,\overline v)=c(\overline u,\overline v)+c'(\overline u,\overline v)+c''(\overline u,\overline v)$. We have
$$
P^{J}_{u,v}(q)=\left\{\begin{array}{ll} \prod_{h\ge 1} f_{h+1}^{a_h}(f_{h+1}-1)^{b_h} & \text {if $\overline c(\overline u,\overline v)=0$}\\ 0 & \text{otherwise}\end{array} \right.
$$
\end{theorem}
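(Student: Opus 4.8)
The plan is to prove Theorem~\ref{T:KLBoolean} by induction on $l(v)$, using the descent recursion of Proposition~\ref{P:KLFromDesc} as the computational engine and the reduction Lemmas~\ref{L:11} and~\ref{L:10} to dismantle the rooted tree one generator at a time, working from the leaves farthest from the root $s_n$ inward. The guiding principle is to match the two sources of complexity in $\overline v$ with the two ingredients of the formula: a generator occurring \emph{once} ($\overline v(s_i)=1$) is removed by the reduction lemmas and should only re-index the counts $a_h,b_h,\overline c$, whereas each maximal branch of \emph{doubled} generators ($\overline v(s_i)=2$) is consumed occurrence-by-occurrence by the descent recursion, and this consumption is arranged to reproduce, step for step, the Catalan recursion $f_h(q)(1+q)-\mu(f_h)q^{h/2+1}=f_{h+1}(q)$ of Lemma~\ref{L:Propf}. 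The base case $l(v)\le 1$ is immediate, since then $\overline v$ has at most one column and $P^J_{u,v}\in\{0,1\}$.

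First I would settle the vanishing alternative: that $\overline c(\overline u,\overline v)\neq 0$ implies $P^J_{u,v}=0$. The aim is to check that every local configuration counted by $c$, $c'$, $c''$ activates one of the three obstructions already isolated in the preliminaries. A $\circ$-column (so $s_j\in J$) bearing an entry such as $(2,0)$ or $(1_r,0)$ makes $us_j\notin W^J$ for the corresponding descent, so Proposition~\ref{P:PropertiesKL}(a) forces $P^J_{u,v}=0$; a $\circ$-column for which $s_jv\notin W^J$ lands in the ``otherwise'' branch of Lemma~\ref{L:10}; and the $c'$, $c''$ shapes (the latter carrying $m(s,s')=3$) are exactly those for which a descent-set inclusion of Proposition~\ref{P:PropertiesKL}(c) fails, so the relevant $\mu$ inside $\widetilde M_{u,v}$ vanishes and, by induction, the recursion collapses to $0$. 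This step is a finite, purely local verification against the listed patterns.

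When $\overline c=0$ I would peel generators from the outside in. For a non-root leaf $s_i$ with $\overline v(s_i)=1$ its unique occurrence sits in leftmost position, so $s_iu,s_iv\in W_{S^i}$, and I may apply Lemma~\ref{L:11} (if $\overline u(s_i)=1$) or Lemma~\ref{L:10} (if $\overline u(s_i)=0$) to delete that column and pass to a smaller tree; such a leaf can only occur as a $(\not 2,*)$ entry of an unconstrained left-arm $(\,\cdot\,)^n$, so $a_h,b_h$ and $\overline c$ are unchanged and the inductive value matches the target. For a non-root leaf with $\overline v(s_i)=2$ I would apply the descent recursion at its right occurrence $s_i\in D_R(v)$: using Proposition~\ref{P:PropertiesKL}(b) and the inductive hypothesis one shows the two leading terms combine as $q^cP^J_{us_i,vs_i}+q^{1-c}P^J_{u,vs_i}=(1+q)\,g$, where $g$ carries the factor $f_h$ for the chain shortened by one, while $\widetilde M_{u,v}$ reduces, after all but one $\mu$ is killed by Proposition~\ref{P:PropertiesKL}(c), to exactly $\mu(f_h)q^{h/2+1}g'$ for the same common factor. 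Lemma~\ref{L:Propf} then upgrades $f_h$ to $f_{h+1}$, which is the desired recursion on the exponent attached to that branch. The dichotomy between $a_h$ (factor $f_{h+1}$) and $b_h$ (factor $f_{h+1}-1$) is governed by the cap: a $\times$-cap, or a $\circ$-cap in one of the special shapes among the last three summands of $a_h$, keeps the full $f_{h+1}$, while a plain $\circ$-cap (so $s_i\in J$) forces, through the ``otherwise'' branch of Lemma~\ref{L:10} together with the parabolic recursion of Proposition~\ref{P:DefQuot}, the removal of the constant term, i.e. the passage from $f_{h+1}$ to $f_{h+1}-1$. Multiplicativity over distinct branches follows because, once their common ancestor has been reached, generators in different branches are non-adjacent in the Coxeter graph and hence commute, so the factors separate.

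The hard part will be the exact evaluation of $\widetilde M_{u,v}$ at each doubled-generator step. This requires knowing, for every intermediate $w$ with $u\le w<vs_i$ and $ws_i<w$, whether $\mu(w,vs_i)\neq 0$; by Corollary~\ref{C:MuEqual} and Proposition~\ref{P:PropertiesKL}(c) this is controlled by descent sets, but isolating the single surviving term and confirming that its contribution is precisely $\mu(f_h)q^{h/2+1}$—uniformly across the five shapes of $a_h$, the two of $b_h$, and the boundary cases where a chain meets a branch vertex with several children—is where essentially all the bookkeeping resides. I expect the cleanest organization to be a simultaneous induction that proves, alongside the formula for $P^J_{u,v}$, the auxiliary statement that $\mu(u,v)\neq 0$ exactly when the diagram of $(\overline u,\overline v)$ has a prescribed shape (with $\mu$ then equal to the Catalan number $\mu(f_h)$), since this is precisely the input needed to evaluate $\widetilde M_{u,v}$ at the following stage.
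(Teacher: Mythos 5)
Your plan follows the paper's proof essentially step for step: induction on $l(v)$, peeling the leftmost columns with Lemmas~\ref{L:11} and~\ref{L:10} when a generator occurs once, applying the descent recursion of Proposition~\ref{P:KLFromDesc} together with Lemma~\ref{L:Propf} to the doubled chains, and disposing of the $\overline c\ne 0$ configurations via Proposition~\ref{P:PropertiesKL}. The auxiliary $\mu$-characterization you propose to carry along in the induction is exactly Corollary~\ref{C:TopCoeff}, which the paper likewise invokes inductively to isolate the surviving terms of $\widetilde M_{u,v}$; the bookkeeping you defer is precisely the computation the paper performs for the configurations (\ref{E:Case1}) and (\ref{E:Case2}).
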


\begin{corollary}
\label{C:TopCoeff}
Let $J\subseteq S$, $u,v\in W^J$ with $l(v)-l(u)\ge 3$ odd. Then $\mu (u,v)\ne 0$ if and only if the entries in each column of the diagram of $(\overline u,\overline v)$ are equal except for exactly one subdiagram which is 
$$
\xymatrix{{
\left(  \begin{tabular}{|c|}\multicolumn{1}{c}{$*$} \\ \hline $2$\\ \hline $1_l$ \\ \hline\end{tabular}\right)^{h+1}} \ar@{-}[r] &
{\begin{tabular}{|c|}\multicolumn{1}{c}{$*$}\\ \hline $\not 0$\\ \hline $0$\\ \hline\end{tabular}}}
\text{ or }
\xymatrix{{
\left(  \begin{tabular}{|c|}\multicolumn{1}{c}{$*$}\\ \hline $2$ \\ \hline $1_l$ \\ \hline\end{tabular}\right)^h} \ar@{-}[r] &
{\begin{tabular}{|c|c|c|}\multicolumn{1}{c}{$*$}& \multicolumn{1}{c}{}&\multicolumn{1}{c}{$*$}\\ \hline $2$& $\dots$ & $2$\\ \hline $0$ & $\dots$ & $0$\\ \hline\end{tabular}}}
$$
In this case $\mu(u,v)=C([\frac{h+1}{2}])$, the $[\frac{h+1}{2}]$-th Catalan number.
\end{corollary}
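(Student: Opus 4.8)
The plan is to read off the top-degree behaviour of the product formula in Theorem \ref{T:KLBoolean}. By Corollary \ref{C:MuEqual} we have $\mu(u,v)=\mu_{J,q}(u,v)$, the coefficient of $q^{\frac12(l(u,v)-1)}$ in $P^{J}_{u,v}$; since $\deg P^{J}_{u,v}\le\frac12(l(u,v)-1)$, this coefficient is nonzero precisely when $P^{J}_{u,v}$ attains the maximal possible degree $\frac12(l(u,v)-1)$. Thus the whole statement is equivalent to characterising when $\deg P^{J}_{u,v}=\frac12(l(u,v)-1)$ and then identifying the leading coefficient. If $\overline c(\overline u,\overline v)\ne 0$ then $P^{J}_{u,v}=0$ and $\mu(u,v)=0$; one checks directly that each of the two displayed diagrams forces $c=c'=c''=0$, so I may assume $\overline c(\overline u,\overline v)=0$ and work with $P^{J}_{u,v}=\prod_{h\ge 1}f_{h+1}^{a_h}(f_{h+1}-1)^{b_h}$.

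First I would record the elementary facts about the factors. From the definition of $f_h$ one has $\deg f_{h+1}=\deg(f_{h+1}-1)=[\frac{h+1}{2}]$, and both have leading coefficient $C(h+1,0)$, the first entry of row $h+1$ of the Catalan triangle; the remark that the first column consists of the Catalan numbers gives $C(2k,0)=C(k)$, the $k$-th Catalan number. Since the leading coefficient of a product is the product of the leading coefficients,
$$
\deg P^{J}_{u,v}=\sum_{h\ge1}\Big[\tfrac{h+1}{2}\Big](a_h+b_h),\qquad \text{leading coeff}\big(P^{J}_{u,v}\big)=\prod_{h\ge1}C(h+1,0)^{a_h+b_h}.
$$

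The heart of the argument is a length count. Writing $l(u,v)=\sum_{s}(\overline v(s)-\overline u(s))$, the columns with $\overline u(s)=\overline v(s)$ contribute $0$, so $l(u,v)$ is carried entirely by the ``unequal'' columns. Using that $2[\frac{h+1}{2}]$ equals $h+1$ for $h$ odd and equals $h$ for $h$ even, I would show, by matching each generator contributing to some $a_h$ or $b_h$ against its column, that
$$
l(u,v)-1-2\deg P^{J}_{u,v}=E+\sum_{h\ \mathrm{even}}(a_h+b_h),
$$
where $E\ge0$ is an excess counting (i) every unequal column not absorbed into one of the tight configurations of the two displayed forms, and (ii) the separation between two distinct such configurations. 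Both summands on the right are nonnegative, so $\mu(u,v)\ne0$ forces $E=0$ and $a_h=b_h=0$ for all even $h$. The condition $E=0$ is exactly the assertion that, apart from equal columns, the diagram is a single connected configuration of the stated shape, while the vanishing of the even-$h$ counts pins its parameter to an odd value; these two requirements are what the two displayed subdiagrams encode. The bookkeeping in (i)--(ii) is where the real work lies: one must verify column by column that the only way to spend length without losing degree is to form a run of $(2,1_l)$ cells capped either by a $(\not0,0)$ cell or by a chain of $(2,0)$ cells, and that two separated such runs always create a unit of excess.

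Finally I would compute the leading coefficient in the surviving case. With all even-$h$ factors absent and a single special subdiagram, the degree equals $\frac12(l(u,v)-1)$ and the leading coefficient is $\prod_{h}C(h+1,0)^{a_h+b_h}$ over odd $h$. The principal factor is an $f_{h+1}$ (resp.\ $f_{h+1}-1$) coming from the run of $(2,1_l)$ cells, with $h$ odd, contributing $C(h+1,0)=C(\frac{h+1}{2})=C([\frac{h+1}{2}])$; any additional $(2,0)$ cell of a chain in the second form contributes only a factor $f_2=1+q$ with leading coefficient $C(2,0)=1$, hence does not alter the top coefficient. This yields $\mu(u,v)=C([\frac{h+1}{2}])$, as claimed. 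I expect the main obstacle to be precisely the excess count above -- in particular verifying that a chain of $(2,0)$ cells, which splits into several product factors, still contributes only a single Catalan number, and reconciling the central $(2,0)$ instance of the first form with the second so that the parameter $h$ is assigned consistently.
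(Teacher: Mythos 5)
Your argument is essentially the paper's own: the published proof consists of the single observation that more than one special subdiagram yields more than one factor in the product of Theorem \ref{T:KLBoolean}, so the degree of $P^{J}_{u,v}$ drops below $\frac{1}{2}(l(u,v)-1)$, with the value of $\mu$ then read off from the leading (Catalan-number) coefficient of $f_{h+1}$. Your explicit degree and leading-coefficient bookkeeping, the excess $E$, and the even-$h$ vanishing condition are a more detailed rendering of that same count, and the column-by-column verification you flag as ``the real work'' is not carried out in the paper either.
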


\begin{proof}
If in the diagram of $(\overline u,\overline v)$ there are more that one subdiagram with the properties in the statement, then by Theorem \ref{T:KLBoolean} $P_{u,v}^{J}$ is the product of at least two factors. Then the degree of $P_{u,v}^{J}$ is at most $l(v)-l(u)-2$. The last part of the statement follows by properties of $f_h(q)$.
\end{proof}

We now  prove Theorem \ref{T:KLBoolean}.
\begin{proof}
We argue by induction on $l(v)$. If $l(v)=1$ then $P_{u,v}^{J}=1$, since $u\le v$ and the result is trivial. Now suppose $l(v)\ge 2$. Let $C$ be one of the leftmost columns in the diagram. The entries of $C$ can be filled by several values.

We first consider the case that $C$ contains the pair $(1_R,0)$. Let $s\in S$ be the element corresponding to $C$. Then $s\in D_R(v)$ and $us\not\le vs$, $us\in W^J$ (since R is a capital letter). Therefore $P_{u,v}^{J}=P_{u,vs}^{J}$, i.\ e.\ we can remove the column $C$ from the diagram without changing the Kazhdan--Lusztig polynomial. The statement follows by induction.

Let's suppose that $C$ contains the pair $(1_r,0)$. As before $s\in D_R(v)$. If $s\in J$ then $us\not\in W^J$, therefore $P_{u,v}^{J}=0$ and this agrees with the fourth summand in $c(\overline u,\overline v)$. Otherwise, $us\in W^J$ and $us\not\le vs$, therefore $P_{u,v}^{J}=P_{u,vs}^{J}$ and the claim follows by induction.

If $C$ contains the pair $(1_R,1_r)$ or $(1_r,1_r)$ then $u\not\le vs$ and therefore $P_{u,v}^{J}=P_{us,vs}^{J}$. The statement follows.

Now suppose that $C$ contains $(1_l,1_l)$. By Lemma \ref{L:11} $P_{u,v}^{J}=P_{su,sv}^{J\cap \Com(s)}$. This is equivalent to deleting the column $C$ and  putting an $\times$ above the column on the right of $C$. This agrees with the assumption $(1_l,1_l)\notin P_1\cup P_2$. If $C$ contains $(2,2)$ then $P_{u,v}^{J}=P_{us,vs}^{J}$ and we are in the case $(1_l,1_l)$. Again this agrees with the assumption $(2,2)\notin P_1\cup P_2$.

If $C$ contains $(1_l,0)$ then by Lemma \ref{L:10} $P_{u,v}^{J}=P_{u,sv}^{J}$ except in the case $sv\notin W^J$. Then we have to exclude 
$$
\begin{tabular}{|c|c|}\multicolumn{1}{c}{$*$} & \multicolumn{1}{c}{$\circ$}\\ \hline $1_l$ & $1_l/2$\\ \hline $*$ & $0$\\ \hline\end{tabular}\quad \text{ and }
\xymatrix{{ \begin{tabular}{|c|}\multicolumn{1}{c}{$*$}\\ \hline $1_l$\\ \hline $0$ \\ \hline\end{tabular}} \ar@{-}[r] &
{\begin{tabular}{|c|}\multicolumn{1}{c}{$\circ$}\\ \hline $1_l$\\ \hline $*$\\ \hline\end{tabular}}\\
{\begin{tabular}{|c|}\multicolumn{1}{c}{$*$}\\ \hline $2$\\ \hline $*$ \\ \hline\end{tabular}}\ar@{-}^{m(s,s')=3}[ur]
}
$$
These diagrams are included in $c''(\overline u,\overline v)$, in the first summand of $c'(\overline u,\overline v)$ and in three summands of $c(\overline u,\overline v)$.
If $C$ contains $(2,1_r)$ then $P_{u,v}^{J}=P_{us,vs}^{J}$ and we are in the case $(1_l,0)$.

Now suppose that $C$ contains $(2,0)$ and the second entry in the column on the right is non-zero. Then $P_{u,v}^{J}=P_{u,vs}^{J}$ since $us\not \le vs$ and there is no $w\in W^J$ with $u\le w<vs$ and $ws<w$\footnote{$ws<w\le vs$ implies that $s$ is an occurrence in the first place of the word $\overline w$  (since the same is for the word $\overline{vs}$); therefore $s\in D_L(w)\cap D_R(w)$ and so if we denote with $t$ the element on the right of $s$ then $w(t)=0$ but this implies that $u\not \le w$.}. Then we are in the case $(1_l,0)$ and this agrees with the assumption $(2,0)\in P_2$.

If $C$ contains $(2,1_l)$ and the second entry in the column on the right is non-zero, then $us\notin W^J$ if and only if the diagram is such as in $c''(\overline u,\overline v)$. Otherwise $P_{u,v}^{J}=P_{u,vs}^{J} $ since, as before, there is no $w\in W^J$ with $u\le w<vs$ and $ws<w$. Then we are in the case $(1_l,1_l)$.

Finally we have to consider the cases $(2,1_l)$ or $(2,0)$ with the second entry in the column on the right equal to $0$. By Proposition \ref{P:PropertiesKL}, they can be treated as a unique case. Therefore we assume that $C$ contains $(2,1_l)$.

For the diagram
\begin{equation}
\label{E:Case1}
\xymatrix{{
\left(  \begin{tabular}{|c|}\multicolumn{1}{c}{$*$} \\ \hline $2$\\ \hline $1_l$ \\ \hline\end{tabular}\right)^n} \ar@{-}[r] &
{\begin{tabular}{|c|}\multicolumn{1}{c}{$*$}\\ \hline $1_*$\\ \hline $0$\\ \hline\end{tabular}}}
\end{equation}
the corresponding Kazhdan--Lusztig polynomial is  $P_{u,v}^{J}=f_n-\alpha$, where $\alpha=1$ when there are $\circ$ and $1_l$ on the rightmost column and $\alpha=0$ otherwise. In fact, by induction and  by Proposition \ref{P:KLFromDesc}, $P_{u,v}^{J} = f_{n-1}(q)-\alpha + q f_{n-1}(q)- \mu(f_{n-1}(q))q^\frac{n-1}{2}$.  Note that by induction, if $\mu(w,vs)\ne 0$ then necessarily the diagram of $w$ coincides with the diagram of $v$ in all other  columns not depicted in (\ref{E:Case1}). By Lemma \ref{L:Propf} we get $P_{u,v}^{J}=f_n-\alpha$ (note that if $n=1$, $f_1-1=0$ and this agrees with the $3^{rd}$ and $5^{th}$ summands in $c(\overline u,\overline v)$).

For the last subcase,
\begin{equation}
\label{E:Case2}
\#\xymatrix{{
\left(  \begin{tabular}{|c|}\multicolumn{1}{c}{$*$} \\ \hline $2$\\ \hline $1_l$ \\ \hline\end{tabular}\right)^n}\ar@{-}[r] &
{\begin{tabular}{|c|}\multicolumn{1}{c}{$*$}\\ \hline $2$\\ \hline $0$\\ \hline\end{tabular}}}
\end{equation}
the analysis is a bit harder. Let's assume that on the right of this diagram there is a sequence of $m$ columns
\begin{equation}
\label{E:Case2bis}
 \begin{tabular}{|c|c|c|c|}\multicolumn{1}{c}{$*$}& \multicolumn{1}{c}{$*$}& \multicolumn{1}{c}{} & \multicolumn{1}{c}{$*$}\\ \hline $2$ & $2$ & $\cdots$ & $ 2$ \\ \hline $0$ & $0$ & $\cdots$ & $0$ \\ \hline\end{tabular}
\end{equation}
ending with a column whose entries are not $(2,0)$ or with a column corresponding to a vertex of degree greater than $2$. Suppose that exactly $k$  of these columns have a $\circ$ and the other $m-k$ have a $\times$. 
Let $\overline P_{u,v}^{J}$ be the Kazhdan--Lusztig polynomian corresponding to the diagram of $(\overline u, \overline v)$ after deleting the subdiagrams depicted in (\ref{E:Case2}) and (\ref{E:Case2bis}). By appling Proposition \ref{P:KLFromDesc}  we get by induction
$$
P_{u,v}^{J}=(f_{n}(q)-\alpha)q^k(1+q)^{m-k}\overline P_{u,v}^{J}+f_n(q) q^{k+1}(1+q)^{m-k}\overline P_{u,v}^{J}-\widetilde M_{u,v}
$$
where $\alpha=1$ if there is a $\circ$  on the rightmost column of (\ref{E:Case2}) and $\alpha=0$ otherwise, and $\widetilde M_{u,v}$ is the sum in Proposition \ref{P:KLFromDesc}. Note that by induction and Corollary \ref{C:TopCoeff} $\mu(w,vs)\ne 0$ only if  the diagram of $w$ coincides with that of $v$ in all the columns not depicted in (\ref{E:Case2}) and (\ref{E:Case2bis}).
More precisely, for any such $w$, the diagram of $(\overline w,\overline {vs})$ is of the form
$$
\xymatrix{{
\left(  \begin{tabular}{|c|}\multicolumn{1}{c}{$*$} \\ \hline $2$\\ \hline $1_l$ \\ \hline\end{tabular}\right)^{n-1}} \ar@{-}[r] &
{\begin{tabular}{|c|c|c|c|c|c|}\multicolumn{1}{c}{$*$} &\multicolumn{1}{c}{} &\multicolumn{1}{c}{$*$} &\multicolumn{1}{c}{$\times$} &\multicolumn{1}{c}{} &\multicolumn{1}{c}{$*$} \\ \hline $ 2$&$ 2$&$ 2$&$ 2$&$ 2$&$ 2$\\ \hline $0$ & $\cdots$ & $0$ & $2$ & $\cdots$ & $2$ \\ \hline\end{tabular}}\\
{\begin{tabular}{|c|}\multicolumn{1}{c}{$*$} \\ \hline $1_l$\\ \hline $1_l$ \\ \hline\end{tabular}}\ar@{-}[ur]
}
$$
and in all other columns the first entries are equal to the second entries.
Therefore $\widetilde M_{u,v}$ is
$$
\overline P^{J}_{u,v}\mu(f_n(q))\big(q^{\frac{n-2}{2}+k}(q+1)^{m-k-1}+q^{\frac{n-2}{2}+k+1}(q+1)^{m-k-2}+\cdots + q^{\frac{n-2}{2}+m-1}+q^{\frac{n-2}{2}+m}  \big)
$$
if $n$ is even and $0$ if $n$ is odd.
In this formula the powers of $q$ include both the contributions of $q^{\frac{l(w,vs)}{2}}$ and of $P_{u,w}^{J}$. In the case $n$ even, we have 
\begin{align*}
\widetilde M_{u,v}=&
\overline P^{J}_{u,v} \mu(f_n(q)) \big( q^{\frac{n-2}{2}+k}((q+1)^{m-k}-q^{m-k})+q^{\frac{n-2}{2}+m}\big)\\
&=\overline P^{J}_{u,v} \mu(f_n(q))(q+1)^{m-k}q^{\frac{n-2}{2}+k}
\end{align*}
and therefore 
$$
P_{u,v}^{J}=\overline P^{J}_{u,v}q^k(1+q)^{m-k}\big(f_n(q)-\alpha+qf_n(q)-\mu({f_n(q)})q^{\frac{n-2}{2}}\big)=P^{J}_{u,v}q^k(1+q)^{m-k}(f_{n+1}(q)-\alpha)
$$
by Lemma \ref{L:Propf}. Analogously, if $n$ is odd we have
$$
P_{u,v}^{J}=\overline P_{u,v}q^k(1+q)^{m-k}\big(f_n(q)-\alpha+qf_n(q)\big)=\overline P_{u,v}q^k(1+q)^{m-k}(f_{n+1}(q)-\alpha).
$$
The cases $n=1$ and $n=2$ are similar (note that $f_1(q)-\alpha=0$ if $\alpha=1$). Thus the proof is completed.
\end{proof}

In the case of the classical Kazhdan--Lusztig polynomias, Theorem \ref{T:KLBoolean} becomes much simpler.

\begin{corollary}
Let $W$ be a tree-Coxeter group and $u,v\in W$ be boolean elements. Then $P_{u,v}(q)=\prod_{h\ge 1}f_{h+1}^{a_h}$, where $a_h$ is defined before Theorem \ref{T:KLBoolean}.
\end{corollary}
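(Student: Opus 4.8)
The plan is to derive this directly from Theorem \ref{T:KLBoolean} by specializing to the trivial parabolic subgroup $J=\emptyset$. Since $W^\emptyset=W$, every pair of boolean elements $u,v\in W$ lies in $W^J$ for this choice, and the ordinary polynomial is recovered as $P_{u,v}(q)=P^\emptyset_{u,v}(q)$. Thus it suffices to feed $J=\emptyset$ into the theorem and watch the right-hand side collapse.

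First I would record the effect of $J=\emptyset$ on the diagram of $(\overline u,\overline v)$. By the marking convention introduced before the theorem, a column is decorated with $\circ$ precisely when its generator lies in $J$ and with $\times$ otherwise. With $J=\emptyset$ no generator lies in $J$, so every column of the diagram carries the symbol $\times$ and none carries $\circ$.

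Next I would inspect the combinatorial quantities appearing in the formula. Each of the summands defining $c$, $c'$ and $c''$ contains a column marked $\circ$, as do both summands defining $b_h$ and the last three of the five summands defining $a_h$. Since no subdiagram carrying a $\circ$ can occur when every column is marked $\times$, all of these counts vanish. Hence $b_h(\overline u,\overline v)=0$ for every $h$ and $\overline c(\overline u,\overline v)=c+c'+c''=0$, while $a_h(\overline u,\overline v)$ reduces to the sum of its first two (the $\times$-marked) summands.

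Finally, with $\overline c=0$ the first case of Theorem \ref{T:KLBoolean} applies, giving $P_{u,v}(q)=\prod_{h\ge 1}f_{h+1}^{a_h}(f_{h+1}-1)^{b_h}$; since every $b_h$ vanishes the second factor is identically $1$, leaving $P_{u,v}(q)=\prod_{h\ge 1}f_{h+1}^{a_h}$ as claimed. There is no real obstacle here beyond the bookkeeping of which summands survive: the only point requiring care is to confirm that each discarded diagram genuinely forces a $\circ$ on one of its columns, which is immediate from reading off the definitions preceding the theorem.
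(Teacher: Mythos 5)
Your proof is correct and is exactly the argument the paper intends: the corollary is stated as an immediate specialization of Theorem \ref{T:KLBoolean} to $J=\emptyset$ (the paper omits the verification entirely), and your bookkeeping — every column marked $\times$, hence $b_h=0$ and $\overline c=0$ since all their defining subdiagrams require a $\circ$, while $a_h$ survives via its two $\times$-marked summands — is precisely the omitted check.
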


For example, the Kazhdan--Lusztig polynomial of the pair $(u,v)$ depicted in Figure \ref{F:DiagD11} is $P_{u,v}^{J}=f_2(q)-1=q$, since $a_h=0$ for all $h\ge 0$, $b_1=1$ and $b_h=0$ for all $h\ne 1$.

\begin{remark}
Theorem \ref{T:KLBoolean} implies result in \cite[Theorem 5.2]{Marietti2010}.
\end{remark}
We give the following easy consequence of Theorem \ref{T:KLBoolean} which proves, in the case of boolean elements, a conjecture of Brenti \cite{BrentiPr}.
\begin{corollary}
\label{C:conjB}
Let $I\subseteq J$ and $u,v\in W^J$. Then 
$$
P_{u,v}^{J}(q)\le P_{u,v}^{I}(q)
$$
in the coefficientwise comparison.
\end{corollary}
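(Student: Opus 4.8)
The plan is to deduce everything from the closed formula of Theorem~\ref{T:KLBoolean}. First I would reduce to the case in which $J$ and $I$ differ by a single generator. Given $I\subseteq J$, choose a chain $I=J_0\subset J_1\subset\dots\subset J_m=J$ with $J_k\setminus J_{k-1}=\{s_k\}$; since enlarging the subset in \eqref{E:ParabolicQuotient} only adds conditions, one has $W^{J}\subseteq W^{J_k}\subseteq W^{I}$, so all the polynomials involved are defined for our fixed pair $u,v\in W^{J}$, and it suffices to prove $P^{J_k}_{u,v}\le P^{J_{k-1}}_{u,v}$ for every $k$. Thus I may assume $J=I\cup\{s\}$ with $s\notin I$. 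The reduced words $\overline u,\overline v$ depend only on $u,v$ and on the rooted Coxeter graph, not on the parabolic subset, so the diagram of $(\overline u,\overline v)$ is literally the same for $J$ and for $I$: the sole difference is that the column attached to $s$ is marked $\circ$ in the $J$-diagram and $\times$ in the $I$-diagram, every entry and every other mark being unchanged.

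The second step is a monotonicity statement for the obstruction term. Each subdiagram counted by $c$, $c'$ or $c''$ has its rightmost column marked $\circ$, while all its remaining columns carry the symbol $*$. Since the set of $\circ$-columns of the $I$-diagram is obtained from that of the $J$-diagram by deleting the single column $s$, every subdiagram contributing to $\overline c(\overline u,\overline v)$ for $I$ also contributes for $J$; hence $\overline c^{I}\le \overline c^{J}$, the difference being the number of such subdiagrams whose rightmost column is $s$. Consequently, if $\overline c^{J}\ne 0$ then Theorem~\ref{T:KLBoolean} gives $P^{J}_{u,v}=0$, while $P^{I}_{u,v}$ is either $0$ or a product of factors $f_{h+1}$ and $f_{h+1}-1$; as each $f_{h+1}$ has constant term $1$, both $f_{h+1}$ and $f_{h+1}-1$ have nonnegative coefficients, so $P^{I}_{u,v}\ge 0=P^{J}_{u,v}$ and this case is settled.

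It remains to treat $\overline c^{J}=0$, which by the above forces $\overline c^{I}=0$ as well, so that both polynomials are given by the product in Theorem~\ref{T:KLBoolean}. Here the argument is local at the column $s$. A subdiagram counted by some $a_h$ or $b_h$ whose rightmost column is not $s$ is insensitive to the flip, because its unique marked cell is that rightmost column and every other cell bears a $*$; such a subdiagram contributes the same factor to $P^{J}_{u,v}$ and to $P^{I}_{u,v}$. Writing each product as a common factor times the contribution of the subdiagrams whose rightmost column is $s$, and using that coefficientwise domination is preserved by multiplication with a polynomial having nonnegative coefficients, the claim reduces to comparing the two $s$-local products. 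The point is that with the mark $\times$ the column $s$ can be the rightmost column only of the patterns in the first two summands of $a_h$, contributing factors $f_{h+1}$, whereas with the mark $\circ$ it is the rightmost column of the last three summands of $a_h$ (again factors $f_{h+1}$) or of the two summands of $b_h$ (factors $f_{h+1}-1$), the $c$-type readings being excluded since $\overline c^{J}=0$. As $f_{h+1}-1\le f_{h+1}$ coefficientwise, replacing $\circ$ by $\times$ should only turn factors $f_{h+1}-1$ into factors $f_{h+1}$ and leave the remaining factors $f_{h+1}$ unchanged, so that the $s$-local product does not decrease and $P^{J}_{u,v}\le P^{I}_{u,v}$.

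The main obstacle is this last local comparison, which requires a genuine case analysis over all admissible fillings of the column $s$ and of its descendant branches. The delicate points are two. First, one must match each subdiagram read with $\circ$ to a subdiagram read with $\times$ of index $h'\ge h$, so that the corresponding factor $f_{h'+1}$ dominates $f_{h+1}-1$ coefficientwise; the subtlety is that the side branches allowed in the $b_h$- and $a_h$-patterns are governed by the different sets $P_1$ and $\{(\not 2,*)\}$, and a branch column with top entry $2$ (such as $(2,1_r)\in P_1$) must be shown either to be absorbed into the adjacent $(2,\not 2)$-chain, thereby raising $h$, or to be incompatible with $\overline c^{J}=0$. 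Second, and relatedly, one must verify that every configuration in which the $\times$-reading would fail to produce a compensating $a_h$-factor is precisely one whose $\circ$-reading matches one of the patterns of $c$, $c'$ or $c''$, so that it is already ruled out by the hypothesis $\overline c^{J}=0$ and does not occur in the present case. Once this dictionary between the lost $b_h$-factors and the gained $a_h$-factors, together with the exclusions coming from $\overline c=0$, is checked summand by summand, the inequality follows, completing the proof in conjunction with the case $\overline c^{J}\ne 0$ above.
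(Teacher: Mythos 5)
Your overall strategy is the natural one, and indeed the only plausible one: the paper itself offers no argument for Corollary \ref{C:conjB} beyond declaring it an easy consequence of Theorem \ref{T:KLBoolean}, so your reduction to the case $J=I\cup\{s\}$, the observation that the two diagrams differ only in the mark $\circ$ versus $\times$ on the single column $s$, the disposal of the case $\overline c(\overline u,\overline v)\ne 0$ for $J$ (where $P^J_{u,v}=0$ while $P^I_{u,v}$ has nonnegative coefficients), and the remark that only subdiagrams whose \emph{rightmost} column is $s$ can change under the flip, are all correct and are presumably what the author has in mind.

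The gap is that the decisive step --- the local comparison at the column $s$ --- is announced rather than proved, and your own description of it shows it is not routine. The difficulty you flag is real: the side branches admissible in the $\circ$-patterns (governed by $P_1$, which contains $(2,1_r)$) and those admissible in the $\times$-patterns (governed by $(\not 2,*)$) do not match. Concretely, take $s$ with entries $(1_R,0)$, one child chain of type $(2,\not 2)^{h+1}$ and a second child with entries $(2,1_r)$: with the mark $\circ$ this is counted by the fifth summand of $a_h$ and contributes a factor $f_{h+1}$, but with the mark $\times$ the first summand of $a_h$ fails because the $(2,1_r)$ child violates $(\not 2,*)$, and (the two children lying on different branches of the tree) it cannot be ``absorbed into the adjacent chain'' to raise $h$. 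On the face of the definitions this would give $P^J_{u,v}>P^I_{u,v}$, so one must show that every such configuration is either impossible for an actual pair of boolean elements $u\le v$ in $W^J$ (via the reducedness and leftmost-position conventions defining $\overline u,\overline v$) or already excluded by $\overline c(\overline u,\overline v)=0$; you assert this alternative but verify neither branch of it, and my reading of the summands of $c$, $c'$, $c''$ does not obviously rule the configuration out. Until this dictionary between lost and gained factors is checked summand by summand --- including the possibility that a single column is the rightmost column of several counted subdiagrams --- the inequality is not established, so the proposal as written is an incomplete sketch rather than a proof.
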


Now we consider the case of $\widetilde A_n$ for $n\ge 2$ ($\widetilde A_1$ is a tree-Coxeter group).
The Coxeter diagram of $\widetilde A_n$ is a cycle, therefore we can not apply Theorem \ref{T:KLBoolean}. However we use the same arguments of its proof to have an analogue result. Consider a boolean reflection $t$ in $\widetilde A_n$ of length $2n+1$. Then $t=s_{i+1}s_{i+2}\cdots s_{n}s_0\cdots s_{i-1}s_is_{i-1}\cdots s_0s_n\cdots s_{i+2}s_{i+1}$ for some $i\in[0,n]$ (the indices are modulo $n+1$): for any reduced boolean word of $t$ fix the first and the last letter, then change all other letters as a subword in $A_n$. For any pair $(u,v)\in W^2$, $u\le v\le t$ we depict a diagram whose rightmost column contains $(\overline u(s_i), \overline v(s_i))$. The leftmost column contains $(\overline u(s_{i+1}), \overline v(s_{i+1}))$ and the other columns are defined by following the cyclic Coxeter diagram of $\widetilde A_n$. See Figure \ref{F:Atilde}  for an example.

\begin{figure}
$$
\xymatrix{{
\begin{tabular}{|c|}\multicolumn{1}{c}{$\times$}\\
            \hline $2$\\ \hline $1_l$
             \\ \hline\end{tabular} }
\ar@{-}[rr] &
&{\begin{tabular}{|c|}\multicolumn{1}{c}{$\times$}\\ \hline $1_l$\\ \hline $1_l$ \\ \hline\end{tabular}}\\
 &
{\begin{tabular}{|c|c|c|}\multicolumn{1}{c}{$\times$} &\multicolumn{1}{c}{$\times$} &\multicolumn{1}{c}{$\circ$} \\ \hline $0$ & $2$ & $2$ \\\hline $0$ & $0$ & $0$ \\\hline\end{tabular}}\ar@{-}[ul]\ar@{-}[ur]
}
$$
\caption{Diagram of $(\overline v =s_0s_2s_3s_4s_3s_2s_0\overline u=s_0s_4)$ in $\widetilde A_4$, with boolean reflection $t=s_0s_1s_2s_3s_4s_3s_2s_1s_0$ and $J=\{s_3\}$.}
\label{F:Atilde}
\end{figure}

In the follows we assume that $\overline v(s_i)=1_l$ where $i$ is the central letter in the word of the boolean reflection $t$ and $\overline v(s_j)\ne 0$ where $s_j$ corresponds to the second column on the left. In fact, if $\overline v(s_i)=0$ then $v$ can be identified as an element in  $A_n$ and we can apply Theorem \ref{T:KLBoolean}.

We define
$$
a(\overline u, \overline v)=
\# \begin{tabular}{|c|c|}\multicolumn{1}{c}{$*$}&\multicolumn{1}{c}{$\times$}\\\hline $2$ & $2$\\\hline $*$ & $0$\\\hline\end{tabular}
+\quad\#\xymatrix{{
\begin{tabular}{|c|}\multicolumn{1}{c}{$\times$}\\
            \hline $2$\\ \hline $*$
             \\ \hline\end{tabular} }\ar@{-}[rr]
&
&{\begin{tabular}{|c|}\multicolumn{1}{c}{$\times$}\\ \hline $1_l$\\ \hline $0$ \\ \hline\end{tabular}}\\
 &
{\begin{tabular}{|c|c|}\multicolumn{1}{c}{$\times$} &\multicolumn{1}{c}{}  \\ \hline $\not 0$ & $\cdots$ \\\hline $0$ & $\cdots$  \\\hline\end{tabular}}\ar@{-}[ul]\ar@{-}[ur]
}
$$

$$
b(\overline u, \overline v)=
\# \begin{tabular}{|c|c|}\multicolumn{1}{c}{$*$}&\multicolumn{1}{c}{$\circ$}\\\hline $2$ & $2$\\\hline $*$ & $0$\\\hline\end{tabular}
+\quad\#\xymatrix{{
\begin{tabular}{|c|}\multicolumn{1}{c}{$\times$}\\
            \hline $2$\\ \hline $*$
             \\ \hline\end{tabular}}\ar@{-}[rr] 
&
&{\begin{tabular}{|c|}\multicolumn{1}{c}{$\circ$}\\ \hline $1_l$\\ \hline $0$ \\ \hline\end{tabular}}\\
 &
{\begin{tabular}{|c|c|}\multicolumn{1}{c}{$*$} &\multicolumn{1}{c}{}  \\ \hline $\not 0$ & $\cdots$ \\\hline $0$ & $\cdots$  \\\hline\end{tabular}}\ar@{-}[ul]\ar@{-}[ur]
}
+\quad\#\xymatrix{{
\begin{tabular}{|c|}\multicolumn{1}{c}{$\times$}\\
            \hline $2$\\ \hline $*$
             \\ \hline\end{tabular} }	\ar@{-}[rr]
&
&{\begin{tabular}{|c|}\multicolumn{1}{c}{$\times$}\\ \hline $1_l$\\ \hline $0$ \\ \hline\end{tabular}}\\
 &
{\begin{tabular}{|c|c|}\multicolumn{1}{c}{$\circ$} &\multicolumn{1}{c}{}  \\ \hline $\not 0$ & $\cdots$ \\\hline $0$ & $\cdots$  \\\hline\end{tabular}}\ar@{-}[ul]\ar@{-}[ur]
}
$$


$$
c'''(\overline u, \overline v)=
\#\xymatrix{{
\begin{tabular}{|c|}\multicolumn{1}{c}{$\times$}\\
            \hline $x'$\\ \hline $y'$
             \\ \hline\end{tabular}}\ar@{-}[rr]
&
&{\begin{tabular}{|c|}\multicolumn{1}{c}{$*$}\\ \hline $1_l$\\ \hline $*$ \\ \hline\end{tabular}}\\
 &
{\begin{tabular}{|c|c|} \multicolumn{1}{c}{$\circ$} &\multicolumn{1}{c}{}  \\ \hline $\not 0$ &$\cdots$ \\\hline  $*$ &$\cdots$  \\\hline\end{tabular}}\ar@{-}[ul]\ar@{-}[ur]
}
+\quad\#\xymatrix{{
\begin{tabular}{|c|}\multicolumn{1}{c}{$\times$}\\
            \hline $x'$\\ \hline $y'$
             \\ \hline\end{tabular} }\ar@{-}[rr]
&
&{\begin{tabular}{|c|} \multicolumn{1}{c}{$\circ$}\\ \hline  $1_l$\\ \hline $0$ \\ \hline\end{tabular}}\\
 &
{\begin{tabular}{|c|c|} \multicolumn{1}{c}{$*$} &\multicolumn{1}{c}{}  \\ \hline $\not 0$ &$\cdots$ \\\hline  $0$ & $\cdots$  \\\hline\end{tabular}}\ar@{-}[ul]\ar@{-}[ur]
}
$$
$$
+\quad\#\xymatrix{{
\begin{tabular}{|c|}\multicolumn{1}{c}{$\times$}\\
            \hline $x'$\\ \hline $y'$
             \\ \hline\end{tabular} }	\ar@{-}[rr]
&
&{\begin{tabular}{|c|}\multicolumn{1}{c}{$\circ$}\\ \hline $1_l$\\ \hline $*$ \\ \hline\end{tabular}}\\
 &
{\begin{tabular}{|c|c|} \multicolumn{1}{c}{$*$} &\multicolumn{1}{c}{}  \\ \hline $2$ & $\cdots$ \\\hline  $*$ &$\cdots$  \\\hline\end{tabular}}\ar@{-}[ul]\ar@{-}[ur]
}
$$
where $(x,y),(x',y')\in\{(1_l,0),(1_r,0),(1_r,1_r),(2,1_r)\}$. Moreover $(x',y')$ could be $(2,0)$ (respectively $(2,1_l)$)  if there is a non-zero entry (resp.\ exactly one non-zero entry  with a $\circ$) in the second row of one of the two columns on the right of the first column.

\begin{theorem}
\label{T:Atilde}
Let $u,v\in \widetilde A_n$ boolean reflection. Then
$$
P_{u,v}^{J}=\left\{ \begin{array}{ll} q^{b(\overline u,\overline v)}(1+q)^{a(\overline u,\overline v)}& \text{if $c(\overline u,\overline v)+c'(\overline u,\overline v)+c'''(\overline u,\overline v)=0$}\\
0 &\text{otherwise}\end{array}\right.
$$
\end{theorem}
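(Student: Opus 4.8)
The plan is to carry out the same induction on $l(v)$ and the same leftmost-column case analysis used in the proof of Theorem \ref{T:KLBoolean}, and to isolate the single feature of the cyclic diagram of $\widetilde A_n$ that genuinely differs from the tree situation. Unfolding the cycle along the boolean word $t$ presents the diagram as a path from $s_{i+1}$ (leftmost) to the root $s_i$ (rightmost), together with one extra ``wrap-around'' edge $s_{i+1}s_i$ that closes the cycle; away from that edge the local picture is that of a tree, and the whole task is to understand what the wrap-around contributes near $s_i$.

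First I would dispose of the easy reduction: if $\overline v(s_i)=0$ then $v$ lies in the parabolic subgroup $W_{S^i}$, whose Coxeter graph is the path obtained by deleting $s_i$ from the cycle, hence a tree, so that Theorem \ref{T:KLBoolean} applies directly. By the standing assumptions I may therefore take $\overline v(s_i)=1_l$ and $\overline v(s_j)\neq 0$ for the generator $s_j$ in the second column on the left. I then run through the list of entry patterns of the leftmost column; for each of $(1_R,0)$, $(1_r,0)$, $(1_R,1_r)$, $(1_r,1_r)$, $(1_l,1_l)$, $(2,2)$, $(1_l,0)$, $(2,1_r)$, $(2,0)$ the reductions of Propositions \ref{P:PropertiesKL} and \ref{P:KLFromDesc} and of Lemmas \ref{L:11} and \ref{L:10} go through essentially as in Theorem \ref{T:KLBoolean}, since these steps only use the relation between the leftmost generator and one of its neighbours at a time; they account for the summands recorded in $c$ and $c'$ and for the removal of columns that do not change $P^{J}_{u,v}$.

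The cycle makes itself felt only through the wrap-around edge $s_{i+1}s_i$. Because $\overline v(s_i)=1_l$ the generator $s_i$ occurs once, so the induction eventually deletes one of the two generators flanking $s_i$ (or $s_i$ itself), using Lemma \ref{L:10} when the corresponding $\overline u$-entry is $0$ and Lemma \ref{L:11} when it is $1_l$; at that moment the cycle becomes a path, i.e.\ the boolean diagram of a type-$A$ tree-Coxeter parabolic such as $W_{S^i}$, on which Theorem \ref{T:KLBoolean} computes the remaining factor (with parabolic set $J$, resp.\ $J\cap\Com(s_i)$). The delicate point is that, just before the cut, the two columns flanking $s_i$ interact through the wrap-around: whether $s_iv\in W^J$, and whether a candidate $w$ in the sum $\widetilde M_{u,v}$ of Proposition \ref{P:KLFromDesc} has the descent set forced by Proposition \ref{P:PropertiesKL}(c), now depends on the entries of both flanking columns at once. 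Working these branch-shaped configurations out by hand, with the top-coefficient bookkeeping of Corollary \ref{C:TopCoeff} carried out inside $W_{S^i}$, is what produces the second summands of $a$ and of $b$, the three summands of $c'''$, and the vanishing $P^{J}_{u,v}=0$ exactly when $c+c'+c'''\neq 0$.

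The heart of the matter, and where I expect the main obstacle to lie, is explaining why the Catalan product of Theorem \ref{T:KLBoolean} collapses here to the pure product $q^{b}(1+q)^{a}$. In the tree case the higher factors $f_{h+1}$ with $h\ge 2$ arise only from a chain $\left(\begin{smallmatrix}2\\1_l\end{smallmatrix}\right)^{n}$ of length $n\ge 2$ capped by a descent column, through repeated use of the recursion of Lemma \ref{L:Propf}. The claim to be established is that in $\widetilde A_n$ the presence of the wrap-around at the central branch alters the correction terms $\mu(f_k)q^{k/2}$ so that this recursion never iterates past its base step: any configuration which in a tree would cap such a chain with $n\ge 2$ must here either fall into $c+c'+c'''$, forcing $P^{J}_{u,v}=0$, or collapse to the single base factor $f_2=1+q$ (in the $a$-configurations) or $f_2-1=q$ (in the $b$-configurations). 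Granting this, only the $h=1$ contributions survive and $\prod_{h\ge 1}f_{h+1}^{a_h}(f_{h+1}-1)^{b_h}$ reduces to $q^{b}(1+q)^{a}$, which completes the argument.
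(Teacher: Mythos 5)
Your proposal follows essentially the same route as the paper: the paper's own proof of Theorem \ref{T:Atilde} is a four-line remark stating that the argument is that of Theorem \ref{T:KLBoolean} verbatim --- delete the leftmost column via Lemmas \ref{L:10} and \ref{L:11} in the cases $(1_*,0)$, $(1_*,1_*)$, $(2,2)$, and split the case $(2,\not 2)$ according to whether the $\overline u$-entries of \emph{both} columns to the right of the leftmost one (namely $s_{i+2}$ and the root $s_i$, the two neighbours created by the wrap-around) vanish. The one substantive fact the paper records --- that when both entries are zero one has $\widetilde M_{u,v}=0$ in Proposition \ref{P:KLFromDesc}, so each such step contributes a clean factor $q$ or $1+q$ and the Catalan recursion of Lemma \ref{L:Propf} is never fed a nonzero correction term, which is why no $f_h$ with $h\ge 3$ survives --- is precisely the claim you flag as ``to be established''; your ``altered correction terms'' is this vanishing, so you have located the crux correctly even though, like the paper itself (which ends with ``we leave to the reader all details''), you do not carry out the verification.
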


The proof is the same as in Theorem \ref{T:KLBoolean}. Delete the leftmost column if it contains $(1_*,0),(1_*,1_*),(2,2)$ by using Lemmas \ref{L:10} and \ref{L:11}. If it contains the pair $(2,\not 2)$ then consider the case with the second entries of both column on the right be zero and non-zero. In the first case apply Proposition \ref{P:KLFromDesc} and note that $\widetilde M_{u,v}=0$. We left to the reader all details.

\begin{remark}
For the classical Kazhdan--Lusztig polynomials, Theorem \ref{T:Atilde} reduces to \cite[Theorem 4.4]{Marietti2006}.
\end{remark}

\section {Kazhdan--Lusztig polynomials of boolean signed permutation}
\label{S:CombKL}
In this  section we consider the combinatorial interpretation of the finite Coxeter groups $A_n$, $B_n$ and $D_n$ as (signed) permutations and restate Theorem \ref{T:KLBoolean} by using statistics of such permutations.
We recall (see e.\ g.\ \cite[Chap. 1, 8]{Bjorner2005}) that $A_n$ is the group of permutations of the set $\{1,\dots,n+1\}$, $B_n$ is the set of permutations $\pi$ of $\{-n,-n+1,\dots,-1,1,\dots,n-1,n\}$ such that $\pi(-i)=-\pi(i)$ for all $i\le n$, and $D_n $ is the subset of permutations $\pi\in B_n$ such that the cardinality $\# (\{\pi(1),\dots, \pi(n)\}\cap \{-1,\dots,-n\})$ is even.
Note that each permutation $\pi$ of $A_n$, $B_n$ and $D_n$ is uniquely determined by $[\pi(1),\dots, \pi(n)]$. We call this sequence the \emph{window notation} of $\pi$.

Given a (signed) permutation $\pi$, if $\pi(i)>i$ we say that $\pi(i)$ is a \emph{top excedance} and $i$ is a \emph{bottom excedance} of $\pi$ .

It is well known that the set of all reflections in $A_n$ is given by transpositions $(i,j)$, with $i,j\le n+1$. Any such transposition admits $s_is_{i+1}\cdots s_{j-2}s_{j-1}s_{j-2}\cdots s_{i+1}s_i$ as reduced expression. So every reflection in the symmetric group is boolean and an element $\pi$ is boolean if and only if it is smaller than the top transposition $(1,n+1)$, i.\ e.\ $\pi$ admits a reduced expression which is a subword of $s_1\cdots s_{n-1}s_ns_{n-1}\cdots s_1$.

\begin{lemma}
\label{L:TranAn}
Let $\pi\in A_n$. Then $\pi$ is a boolean element if and only if $\#( \pi(\{1,\dots, i\})\cap \{1,\dots,i\})\ge i-1$ for all $i\le n$. 

Moreover, if $\overline \pi$ is the reduced expression of $\pi$, subword of $s_1\cdots s_{n}\cdots s_1$, then $\overline\pi(s_i)=1_l$ if $i+1$ is a top excedance of $\pi$; $\overline\pi(s_i)=1_r$ if $i+1$ is a top excedance of $\pi^{-1}$; $\overline\pi(s_i)=2$ if and only if $\pi(i+1)=i+1$ and $\pi(\{1,\dots, i\})\ne \{1,\dots,i\}$; $\overline\pi(s_i)=0$ if and only if $\pi(\{1\,\dots,i\})=\{1\,\dots,i\}$. 
\end{lemma}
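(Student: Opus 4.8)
The plan is to prove the two assertions of Lemma~\ref{L:TranAn} separately. I begin with the characterization of boolean elements. Recall that $\pi$ is boolean if and only if $\pi \le (1,n+1)$ in the Bruhat order, equivalently $\pi$ admits a reduced expression that is a subword of $s_1\cdots s_{n-1}s_n s_{n-1}\cdots s_1$. The strategy here is to translate this subword condition into the stated counting condition $\#(\pi(\{1,\dots,i\})\cap\{1,\dots,i\})\ge i-1$ for all $i\le n$. For this I would use the standard description of Bruhat order on $A_n$ via the rank function: $u\le v$ if and only if, for all $i,j$, the number of entries among $u(1),\dots,u(i)$ that are at least $j$ is $\le$ the corresponding count for $v$. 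Applying this criterion to $v=(1,n+1)$, whose window notation is $[n+1,2,3,\dots,n,1]$, the rank conditions collapse to precisely the inequalities $\#(\pi(\{1,\dots,i\})\cap\{1,\dots,i\})\ge i-1$, since $(1,n+1)$ moves only the extreme values $1$ and $n+1$ and fixes everything between. So the first part reduces to a direct comparison of rank matrices.

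For the second part, the cleanest approach is to read off $\overline\pi(s_i)$ from the one-line notation by relating occurrences of $s_i$ in the canonical subword to how $\pi$ acts across the ``cut'' between positions/values $i$ and $i+1$. The key observation is that $\overline\pi(s_i)$ counts how many times the generator $s_i=(i,i+1)$ appears in the chosen reduced word, and because the word is a subword of the palindrome $s_1\cdots s_n\cdots s_1$, each $s_i$ occurs $0$, $1$, or $2$ times. I would interpret each occurrence combinatorially: a single left occurrence ($1_l$) corresponds to a value being carried from a low position to a high one (a top excedance at $i+1$ for $\pi$), a single right occurrence ($1_r$) corresponds to the symmetric phenomenon for $\pi^{-1}$, and a double occurrence ($2$) corresponds to a value passing through position $i+1$ and returning, which forces $\pi(i+1)=i+1$ while still having nontrivial action below $i+1$. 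The case $\overline\pi(s_i)=0$ is exactly when $\{1,\dots,i\}$ is stable, so no transposition crosses the cut at all.

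To make these correspondences precise I would argue by the following bookkeeping. Since $\overline\pi$ is a subword of the palindrome, reading it as a sorting network shows that $\overline\pi(s_i)$ equals the number of ``crossings'' at level $i$, i.e.\ the number of wires whose endpoints lie on opposite sides of the cut separating $\{1,\dots,i\}$ from $\{i+1,\dots,n+1\}$, counted with the palindrome's left/right structure. Concretely, the number of indices $j\le i$ with $\pi(j)>i$ (call it the ``upward crossing count'') together with its transpose for $\pi^{-1}$ determines the value: if there is exactly one upward crossing it is $1_l$, if exactly one downward crossing it is $1_r$, and the value $2$ arises precisely when there is a crossing in both directions, which by the boolean rank inequalities can only happen when $\pi(i+1)=i+1$ with $\{1,\dots,i\}$ not stable. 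The boolean condition proved in the first part guarantees these crossing counts never exceed the values $0,1,2$, so the case analysis is exhaustive and mutually exclusive.

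The main obstacle I expect is the second part: verifying that the left/right distinction ($1_l$ versus $1_r$) is correctly recorded by the \emph{specific} canonical subword chosen in Section~\ref{S:KLMain} (the one placing each letter ``in the leftmost admissible position''), rather than by an arbitrary reduced subword. The left/right labels are not intrinsic to $\pi$ but depend on this normalization, so I would need to check that the leftmost-position convention forces a single occurrence of $s_i$ onto the left half of the palindrome exactly when $i+1$ is a top excedance of $\pi$, and onto the right half exactly when $i+1$ is a top excedance of $\pi^{-1}$. This is a careful but routine induction on the positions, tracking how the normalization interacts with the symmetry $\pi\leftrightarrow\pi^{-1}$ (which reverses the word), and it is where the bulk of the verification lies.
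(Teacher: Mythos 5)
Your first part is correct but takes a genuinely different route from the paper: you invoke the rank-matrix (tableau) criterion for Bruhat order and compare $\pi$ with $(1,n+1)=[n+1,2,\dots,n,1]$, whereas the paper argues by induction on $n$, peeling off $s_1$ and tracking how left/right multiplication by $s_1$ affects the window notation. Your reduction is sound --- the inequalities at $j=i+1$ do imply the remaining rank conditions --- and it gives a cleaner, less case-heavy proof of the equivalence, at the cost of quoting the tableau criterion as a black box.

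The second part, however, has a genuine gap. The trichotomy you propose cannot work as stated: your ``upward crossing count'' $\#\{j\le i:\pi(j)>i\}$ and its ``transpose'' for $\pi^{-1}$, namely $\#\{j\le i:\pi^{-1}(j)>i\}=\#\{k>i:\pi(k)\le i\}$, are always equal (both equal $i-\#(\pi(\{1,\dots,i\})\cap\{1,\dots,i\})$), so ``exactly one upward crossing'', ``exactly one downward crossing'' and ``a crossing in both directions'' are one and the same condition and cannot separate the cases $1_l$, $1_r$ and $2$. (Concretely, $\pi=[3,1,2]=s_2s_1$ has one crossing in each direction at level $1$, yet $\overline\pi(s_1)=1_r$, while $\pi=[2,3,1]=s_1s_2$ has the same counts and $\overline\pi(s_1)=1_l$.) What distinguishes the cases is not how many wires cross the cut but which ones: $1_l$ forces the unique large value at a position $\le i$ to be exactly $i+1$ (so $\pi^{-1}(i+1)\le i$), $1_r$ forces the unique position $>i$ carrying a small value to be exactly $i+1$ (so $\pi(i+1)\le i$), and $2$ occurs when neither holds, which pins $\pi(i+1)=i+1$ with $\{1,\dots,i\}$ not stable. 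The paper obtains this by restricting $\overline\pi$ to the letters $s_i,\dots,s_n$, computing directly that $s_i\overline\pi'$ places the value $i+1$ at position $i$ (and $s_i\overline\pi's_i$ fixes $i+1$), and noting that the remaining letters $s_j$ with $j<i$ only permute positions and values inside $\{1,\dots,i\}$ and so cannot disturb these facts. Your closing paragraph correctly senses that the $1_l/1_r$ bookkeeping is the delicate point, but the induction you defer to is exactly the missing content, and the crossing-count criterion you would feed into it is not the right invariant.
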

\begin{proof}
We prove the first part of the statement by induction on $n$. If $n=1$ there is nothing to prove. Suppose that $n\ge 2$. Now $\pi$ is the product of $s_1$ (on the left, right or both) with a boolean element generated by $s_2,\dots,s_n$. Multiplying by $s_1$ on the right is equivalent to exchanging the first and the second elements in the windows notation of $\pi$; multiplying by $s_1$ on the left is equivalent to exchanging the elements $1$ and $2$ in the window notation of $\pi$.
It is easy to see that $\# (\pi(\{1,\dots, i\})\cap \{1,\dots,i\})$ does not change for $i\ge 2$ and that the claim is always true for $i=1$. The result follows by induction.

Vice versa, let $\pi\in A_n$ as in the statement. If $\pi(1)=1$ then $\pi$ can be identified with a permutation in $A_{n-1}$ (in the following we will say that $\pi\in A_{n-1}$) and the claim is true by induction. Now suppose that $\pi(1)\ne 1$. If $\pi(2)=1$ then $\pi s_1\in A_{n-1}$ so we can apply induction. If $\pi(1)=2$ then $s_i\pi\in A_{n-1}$ and  we can apply induction again. Then we have to consider the case $\pi(1)\ne 1,2$ and $\pi(2)\ne 1$.  Since $\# (\pi(\{1,2\})\cap\{1,2\}) \ge 1$ it forces to $\pi(2)=2$ and then $s_1\pi s_1\in A_{n-1}$. The claim follows.

Now we prove the second part. Fix an index $i\le n$. Let $\overline \pi'$ be the subword of $\overline \pi$ with only letters $s_{i+1},\dots, s_n$. Then $s_i\overline\pi'(i)=i+1$. If we multiply $s_i\overline\pi'$ by $s_j$, $j<i$, on the left or on the right, then the element $i+1$ may be moved on the left in the windows notation. Therefore $i+1$ is a top excedance of $\pi$ if $\overline\pi(s_i)=1_l$. Analogously, $\overline\pi's_i(i+1)=i$ and if we mutiply $s_i\overline\pi'$ by $s_j$, $j<i$, on the left or on the right, the element in $i+1$-th place in the window notation may be replaced with an element smaller that $i$. Therefore $i+1$ is a top excedance of $\pi^{-1}$. The third case is similar since $s_i\overline\pi' s_i(i+1)=i+1$. The last case is trivial. \end{proof}

Given $\pi\in A_n$, we define the following sets.
\begin{align*}
\Exc(\pi)=&\{i\in [n]\vert i+1 \text{ is a top excedance for $\pi$} \};\\
\Fix(\pi)=&\{i\in [n]\vert \pi([i])=[i]\};\\
\NFix(\pi)=&\{i\in [n]\setminus \Fix(\pi)\vert \pi(i+1)=i+1\}.
\end{align*}

Then by Theorem \ref{T:KLBoolean} and Lemma \ref{L:TranAn} we have
\begin{corollary}
\label{C:An}
Let $\pi,\rho\in A_n^J$ be two boolean permutations of $[n+1]$ such that $\pi\le \rho$ in the Bruhat order. Then the Kazhdan--Lusztig polynomial $P_{\pi,\rho}^{J}$ is zero if and only if there exists an index $i\le n$ such that one of the following condition is satisfied (we identify each $s_i\in J$ with $i$)
\begin{itemize}
	\item $i\in \Exc(\rho)\cap \Fix(\pi)$ and $i+1\in J\cap \NFix(\rho)$;
	\item $i,i+1\in \Exc(\rho)\cap \Fix(\pi)$ and $i+1\in J$;
	\item $i\in\Exc(\rho^{-1})\cap J$, $i,i+1\in \Fix(\pi)$,  and  $i-1\notin \Exc(\pi)\cap \Exc(\rho)$;
	\item $i,i+1\in \NFix(\rho)\cap \Exc(\pi^{-1})$ or $i,i+1\in \NFix(\rho)\cap \Exc(\pi)$ and $i+1\in J$;
	\item $i,i+1\in \NFix(\rho)$, $\#(\{i,i+1\}\cap \Exc(\pi^{-1}))=1$, $\#(\{i,i+1\}\cap \Fix(\pi))=1$ and $i+1\in J$.
\end{itemize}
In all other cases, let 
\begin{align}
\label{E:A}
A_{\pi,\rho}=\{i\in [n]\vert i,i+1\in\NFix(\rho), i+1\in \Fix (\pi)\}.
\end{align}
Then 
$$
P_{\pi,\rho}^{J}=q^{\# (A_{\pi,\rho}\cap J)}(1+q)^{\# (A_{\pi,\rho}\cap (S\setminus J))}
$$
\end{corollary}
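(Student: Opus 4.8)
The plan is to read Corollary \ref{C:An} off from Theorem \ref{T:KLBoolean} by combining the dictionary of Lemma \ref{L:TranAn} with the fact that the Coxeter graph of $A_n=S_{n+1}$ is the path $s_1-s_2-\cdots-s_n$ rooted at $s_n$, hence a tree to which the theorem applies with $u=\pi$ and $v=\rho$.

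First I would set up the translation. By Lemma \ref{L:TranAn} every index $i\in[n]$ lies in exactly one of $\Exc(\cdot)$, $\Exc(\cdot^{-1})$, $\NFix(\cdot)$, $\Fix(\cdot)$, and these four alternatives are precisely the four possible entries $1_l,1_r,2,0$ of a column; applying this to both $\rho$ (first row) and $\pi$ (second row), and recording $i\in J$ as $\circ$ and $i\notin J$ as $\times$, turns each of $a_h,b_h,c,c',c''$ into a count of local configurations phrased entirely through the statistics of $\pi$ and $\rho$. Second I would exploit the path structure: every edge carries $m=3$, so the edge-label hypothesis appearing in $c''$ is automatic, and every vertex $s_i$ has the single child $s_{i-1}$. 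Hence no template that genuinely branches can be realised unless its horizontal arm is empty, which forces the exponent $n$ in every $(\cdot)^n$ block of $a_h,b_h,c,c',c''$ to vanish. The surviving patterns are then linear and supported on consecutive indices $i,i+1$ (together with $i-1,i,i+1$ for the triangle in $c''$).

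Matching these one by one against the dictionary, I would check that $\overline c(\overline\pi,\overline\rho)\ne 0$ occurs exactly when some index realises one of the five displayed conditions: for instance $(1_r,0)$ at a $\circ$-column corresponds to ``$i\in\Exc(\rho^{-1})\cap J$'', while a pair $(2,\not 2)\to(2,0)$ carrying the $1_r$- or $\Exc(\pi^{-1})$-patterns corresponds to the $\NFix(\rho)\cap\Exc(\pi^{\pm1})$ clauses of conditions~4 and~5. This yields the vanishing statement. For the value in the non-vanishing cases I would evaluate the product $\prod_{h\ge 1}f_{h+1}^{a_h}(f_{h+1}-1)^{b_h}$ of Theorem \ref{T:KLBoolean} after the above simplifications, identifying which runs of columns with $\overline\rho=2$ feed the $a_h$ and $b_h$ templates and reading off, through the dictionary, that these are governed by the set $A_{\pi,\rho}$, with a factor $f_2-1=q$ whenever the generator indexing the right-hand column of the relevant pair lies in $J$ and $f_2=1+q$ otherwise.

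The main obstacle is exactly this last step: the $a_h,b_h$ are counts of \emph{maximal} runs of $(2,\not 2)$-columns, and a careless reading produces the higher Catalan polynomials $f_{h+1}$ rather than the claimed product of $q$'s and $(1+q)$'s, so one must track with precision how a maximal run decomposes, how the $1_l$ versus $1_r$ distinction and the parity of the run length enter the evaluation by Lemma \ref{L:Propf}, and how the $\circ/\times$ marks interact with the $\NFix$ and $\Fix$ bookkeeping, in order to recover precisely the exponents $\#(A_{\pi,\rho}\cap J)$ and $\#(A_{\pi,\rho}\cap(S\setminus J))$. I would calibrate the correspondence and guard against off-by-one errors by computing the small groups $S_3$, $S_4$ and $S_5$ explicitly before committing to the general argument.
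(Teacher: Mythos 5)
Your approach---specializing Theorem \ref{T:KLBoolean} to the path graph of $A_n$ rooted at $s_n$ via the dictionary of Lemma \ref{L:TranAn}---is exactly what the paper does, which states the corollary with no further proof beyond invoking those two results. One caution: your claim that the exponent $n$ in \emph{every} $(\cdot)^n$ block must vanish is only forced for the genuinely two-armed templates; the single-armed summands of $c$ and $c'$ (e.g.\ the one with target $(1_r,0)$ marked $\circ$) retain their prefix constraint $(x,y)\in P_1$, and that constraint is precisely what produces the clause $i-1\notin\Exc(\pi)\cap\Exc(\rho)$ in the third bullet, so it cannot be discarded in the matching step.
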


For example, let $\pi,\rho\in A_{9}$ defined by $\pi=[2,1,3,6,4,7,5,8,9,10]$ and $\rho=[4,2,3,10,5,6,7,8,1,9]$. By Lemma \ref{L:TranAn} we have that $\pi,\rho$ are boolean elements. Since the descents of $\pi^{-1}=[2,1,3,5,7,4,6,8,9,10]$ are $1,5$ and the descents of $\rho^{-1}=[9,2,3,1,5,6,7,8,10,4]$ are $1,3,9$, then $\pi,\rho$ are both in $A_n^J$ for all $J$ such that $J\cap\{s_1,s_3,s_5,s_9\}=\emptyset$. By \cite[Theorem 2.1.5]{Bjorner2005} we get $\pi\le \rho$ and finally by Corollary \ref{C:An} we have $P_{\pi,\rho}^{J}=0$, if and only if $J\cap\{s_4,s_6,s_8\}\ne\emptyset$. In fact $\Exc(\pi)=\{1,5\}$, $\Exc(\pi^{-1})=\{1,4,6\}$, $\Exc(\rho)=\{3,9\}$, $\Exc(\rho^{-1})=\{8,9\}$, $\Fix(\pi)=\{2,3,7,8,9\}$, $\Fix(\rho)=\emptyset$,  $\NFix(\pi)=\emptyset$ and $\NFix(\rho)=\{2,3,5,6,7,8\}$.
For $J=\{s_2,s_4\}\equiv\{2,4\}$ we have $P_{\pi,\rho}^{J}=q(q+1)$.

For the ordinary Kazhdan--Lusztig polynomials Corollary \ref{C:An} becomes
\begin{corollary}
\label{C:Anclas}
Let $\pi,\rho\in A_n$ be two boolean permutations of $[n+1]$ such that $\pi\le \rho$ in the Bruhat order. Then $P_{\pi,\rho}=(1+q)^{\# A_{\pi,\rho}}$, where $A_{\pi,\rho}$ is defined in (\ref{E:A}).
\end{corollary}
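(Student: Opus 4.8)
The plan is to deduce Corollary \ref{C:Anclas} as the special case $J=\emptyset$ of Corollary \ref{C:An}. First I would note that $W^{\emptyset}=W$, so that every pair of boolean permutations $\pi\le\rho$ automatically lies in $A_n^{\emptyset}$, and that the type-$q$ parabolic polynomial collapses to the ordinary one for $J=\emptyset$. The latter is immediate from Proposition \ref{P:DefQuot}: since $W_{\emptyset}=\{\epsilon\}$, the alternating sum $\sum_{w\in W_{\emptyset}}(-1)^{l(w)}P_{w\pi,\rho}(q)$ reduces to its single term $P_{\pi,\rho}(q)$, so $P_{\pi,\rho}^{\emptyset}=P_{\pi,\rho}$.

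Next I would inspect the five vanishing conditions of Corollary \ref{C:An}. Each of them contains, as a conjunct, the requirement that one of the indices $i$, $i+1$ lie in $J$ (for instance ``$i+1\in J\cap\NFix(\rho)$'', ``$i\in\Exc(\rho^{-1})\cap J$'', or a trailing ``and $i+1\in J$''). For $J=\emptyset$ none of these conjuncts can be satisfied, so no vanishing condition is met and $P_{\pi,\rho}$ is never the zero polynomial, in agreement with the standard fact $P_{\pi,\rho}(0)=1$ for $\pi\le\rho$. (This same fact forces the ``and $i+1\in J$'' in the fourth condition to govern both of its alternatives, since otherwise an ordinary polynomial could vanish.)

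Finally I would substitute $J=\emptyset$ into the product formula of Corollary \ref{C:An}. Because $A_{\pi,\rho}\cap\emptyset=\emptyset$ and $A_{\pi,\rho}\cap(S\setminus\emptyset)=A_{\pi,\rho}$, the exponent of $q$ drops out and that of $(1+q)$ becomes $\#A_{\pi,\rho}$, yielding
$$
P_{\pi,\rho}=P_{\pi,\rho}^{\emptyset}=q^{\#(A_{\pi,\rho}\cap\emptyset)}(1+q)^{\#A_{\pi,\rho}}=(1+q)^{\#A_{\pi,\rho}}.
$$
There is essentially no obstacle: the entire combinatorial analysis has already been carried out in Corollary \ref{C:An}, and all that remains is the syntactic check that every vanishing clause there is guarded by a membership ``$\in J$''. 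A more self-contained alternative would start from the classical product $P_{\pi,\rho}=\prod_{h\ge1}f_{h+1}^{a_h}$ (the corollary following Theorem \ref{T:KLBoolean}) and use that the Coxeter graph of $A_n$ is a path to argue $a_h=0$ for $h\ge2$ and $a_1=\#A_{\pi,\rho}$, together with $f_2(q)=1+q$; but this merely re-runs the diagram bookkeeping already encapsulated in $A_{\pi,\rho}$, so specializing Corollary \ref{C:An} is the shorter route.
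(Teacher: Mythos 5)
Your proposal is correct and matches the paper's (implicit) argument exactly: the paper presents this corollary as nothing more than the specialization of Corollary \ref{C:An} to $J=\emptyset$, where $W^{\emptyset}=W$, the type-$q$ parabolic polynomial reduces to the ordinary one, every vanishing clause is guarded by a membership in $J$ and hence void, and the $q$-exponent collapses. Your remark that the fourth vanishing condition must be read with ``$i+1\in J$'' governing both alternatives (else an ordinary Kazhdan--Lusztig polynomial could vanish) is a sensible and correct reading of the statement.
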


Now we consider the Coxeter group $B_n$. It is easy to check that there are two boolean reflections in $B_n$ which are maximal in the Bruhat order: they are  $s_0s_1\cdots s_{n-1}\cdots s_1s_0$ and $s_{n-1}\cdots s_1s_0s_1\cdots s_{n-1}$ (where $s_0$ is the transposition $(1,-1)$ and $s_i$ is the product $(i,i+1)(-i,-i+1)$ in disjoint cycle notation). In fact, given any boolean word $t$, if there is a letter $s_1$ between two occurrences of $s_0$ then move both elements $s_0$ to the beginning and to the end of $t$ (it is possible since $s_0$ commutes with all other elements) and then manipulate the remainig letters as a subword in $A_{n-1}$; if $s_0$ is between two occurrences of $s_1$ (and therefore there is exact one $s_0$) then necessarily there are no occurrences of $s_{i+1}$ between the two letters $s_{i}$ for all $i\ge 1$, otherwise $t$ is not a reduced word.

\begin{lemma}
\label{L:Bn}
Let $t_1,t_2\in B_n$, $t_1=s_0\cdots s_{n-1}\cdots s_0$, $t_2=s_{n-1}\cdots s_0 \cdots s_{n-1}$. Let $\pi\in B_n$. Then $\pi$ is a boolean element $\pi\le t_1$ if and only if $\#(\vert\pi([i])\vert \cap [i])\ge i-1$ for all $i\le n$ and the only negative elements in the window notation of $\pi$ may be the first entry or the element $-1$.

Moreover, in this case, if $\overline \pi$ is the reduced word of $\pi$, which is a subword of $t_1$, then $\overline \pi(s_i)=1_l$ if $i+1 $ is a top excedence of $\pi$ (if $i=0$ then the window notation of $\pi$ has only one negative entry which is $-1$); $\overline\pi(s_i)=1_r$ if $i+1$ is a top excedance of $\pi^{-1}$ (if $i=0$ then the window notation of $\pi$ has only one negative entry in the first place); $\overline\pi(s_i)=2$ if and only if $\pi(i+1)=\pi(i+1)$ and $\pi([i+1,n])\ne [i+1,n]$ (if $i=0$ then there are excactly two negative entries in the window notation of $\pi$); $\overline \pi(s_i)=0$ if and only if $\pi([i+1,n])=[i+1,n]$ (if $i=0$ then there is no negative element in the window notation of $\pi$).

The permutation $\pi$ is a boolean element $\pi\le t_2$ if and only if $\#(\vert\pi([i])\vert\cap [i])\ge i-1$ and the only negative entry in the window notation of $\pi$ (if it exists) is in the smallest non-fixed element or in the first $m+1$ entries, if $\vert \pi(i)\vert=i$ for all $i\le m$.

Moreover,  in this case, if $\overline \pi$ is a reduced word of $\pi$, which is a subword of $t_2$ then for all $i\ge 1$,  $\overline \pi(s_i)=1_l$ if $i $ is a bottom excedence of $\pi^{-1}$; $\overline\pi(s_i)=1_r$ if $i+1$ is a bottom excedance of $\pi$; $\overline\pi(s_i)=2$ if and only if $\pi(i)=\pi(i)$ and $\pi([i+1,n])\ne [i+1,n]$; $\overline \pi(s_i)=0$ if and only if $\pi([i+1,n])=[i+1,n]$.
\end{lemma}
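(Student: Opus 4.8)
The plan is to reduce everything to the symmetric-group statement already established in Lemma \ref{L:TranAn}, exploiting that $B_n$ is a tree-Coxeter group whose graph is the path $s_0 - s_1 - \cdots - s_{n-1}$ with $s_0$ a leaf. The lemma really consists of four assertions — a characterization and a dictionary for each of $t_1$ and $t_2$ — and I would handle the two reflections separately, because their boolean words are rooted at opposite ends of the path ($s_{n-1}$ for $t_1$, $s_0$ for $t_2$), which is exactly what forces the switch from top to bottom excedances.

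For $t_1 = s_0(s_1\cdots s_{n-1}\cdots s_1)s_0$ the key point is that $s_0$ occurs only at the two ends, so every $\pi\le t_1$ has a reduced subword of the form $s_0^{\varepsilon_L}\sigma s_0^{\varepsilon_R}$ with $\varepsilon_L,\varepsilon_R\in\{0,1\}$ and $\sigma$ a subword of the $A_{n-1}$ boolean reflection $w=s_1\cdots s_{n-1}\cdots s_1\in\langle s_1,\dots,s_{n-1}\rangle\cong A_{n-1}$. I would first record the effect on window notation: right multiplication by $s_0$ negates the first entry, while left multiplication by $s_0$ negates whichever entry equals $\pm 1$. Since $\sigma\in A_{n-1}$ has all-positive window equal to the absolute-value sequence $|\pi|$, this immediately yields that the negative entries of $\pi$ can only be the first entry or the value $-1$, and that $|\pi|$ is boolean in $A_{n-1}$; applying Lemma \ref{L:TranAn} to $|\pi|$ then gives $\#(|\pi([i])|\cap[i])\ge i-1$. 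The converse runs this backwards: from an admissible $\pi$ one forms $\sigma=|\pi|$, invokes Lemma \ref{L:TranAn} to get $\sigma\le w$, and reinstates the (at most two) sign changes by the appropriate $s_0$ multiplications, checking that lengths add so the subword is reduced. The dictionary for $i\ge 1$ is then inherited from the second part of Lemma \ref{L:TranAn} applied to $\sigma$, once one verifies that sign changes confined to position $1$ and the value $\pm 1$ leave the top-excedance status of the values $\ge 2$ unchanged; the entries $\overline\pi(s_0)$ are read off directly from which of $\varepsilon_L,\varepsilon_R$ are nonzero, producing the four stated $i=0$ alternatives.

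For $t_2 = s_{n-1}\cdots s_1 s_0 s_1\cdots s_{n-1}$ I would instead induct on $n$ inside $B$, peeling the outermost letter $s_{n-1}$: deleting it from both ends produces the $B_{n-1}$ reflection $s_{n-2}\cdots s_0\cdots s_{n-2}$, and $\langle s_0,\dots,s_{n-2}\rangle\cong B_{n-1}$ acts on $\{1,\dots,n-1\}$ fixing $\pm n$. As in Lemma \ref{L:TranAn}, right multiplication by $s_{n-1}$ swaps the last two window entries and left multiplication swaps the values $n-1,n$, and one checks these operations preserve the stated conditions; the base case $B_1$ with $t_2=s_0=(1,-1)$ is immediate. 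Because the central letter of $t_2$ is now $s_0$ rather than $s_{n-1}$, the boolean word runs in the opposite direction along the path, and tracking how a value reaches position $i$ shows that it is the \emph{bottom} excedances of $\pi$ and $\pi^{-1}$ (rather than the top excedances) that record $1_l$ and $1_r$; the admissible location of the unique possible negative entry — the smallest non-fixed coordinate, or any of the first $m+1$ entries when $|\pi(i)|=i$ for $i\le m$ — is precisely the set of positions to which the central $s_0$ can be conjugated by the surrounding letters, which I would confirm by the same window-notation bookkeeping.

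I expect the main obstacle to be this sign bookkeeping rather than the length/subword combinatorics, which is routine once Lemma \ref{L:TranAn} is in hand. The delicate points are (i) confirming, in the $t_1$ case, that negating the first entry or the value $\pm 1$ does not disturb the excedance statistics governing $\overline\pi(s_i)$ for $i\ge 1$, so the $A_{n-1}$ dictionary transfers cleanly, together with the correct matching of the $i=0$ entries to the two allowed sign changes; and (ii) proving, in the $t_2$ case, that the admissible location of the single negative entry is exactly the stated one and that the orientation reversal genuinely replaces top by bottom excedances. Both reduce to finite case checks, but that is where all the content of the lemma resides.
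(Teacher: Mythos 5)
Your proposal is correct and matches the paper, which gives no argument for this lemma beyond the one-line remark that ``the proof is essentially the same of that of Lemma \ref{L:TranAn}'' --- i.e.\ peel the outermost letters of the boolean word and track the window notation, exactly as you do (stripping the two $s_0$'s to reduce $t_1$ to the $A_{n-1}$ case, and inducting on $n$ by peeling $s_{n-1}$ for $t_2$). Your write-up in fact supplies more of the sign bookkeeping than the paper does, and correctly isolates the only genuinely delicate points.
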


The proof is essentially the same of that of Lemma \ref{L:TranAn}. We give the Corollary of Theorem \ref{T:KLBoolean} only for ordinary Kazhdan--Lusztig polynomials. The parabolic case could be done as in Corollary \ref{C:An}.

Let $\pi\in B_n$. We set 
\begin{align*}
\Fix(\pi)&=\{i\in [0,n-1]\vert \pi([i+1,n])=[i+1,n]\}\\
\NFix(\pi)&=\{i\in[n-1]\setminus \Fix(\pi)\vert \pi(i)=i \}\cup \{0 \text{ if $\#\{\pi(i)<0\}=2$}\}
\end{align*}
\begin{corollary}
\label{C:BnClas}
Let $\pi,\rho\in B_n$ two boolean elements in $B_n$ such that $\pi\le \rho$ in the Bruhat order. Then the Kazhdan--Lusztig polynomial $P_{\pi,\rho}$ is given by
$$
P_{\pi,\rho}=\left\{  
\begin{array}{ll}
(1+q)^{B_{\pi,\rho}} & \text{if $\pi\le\rho\le t_1$}\\
(1+q)^{B'_{\pi,\rho}} & \text{if $\pi\le \rho \le t_2$}
\end{array}
\right.
$$
where $B_{\pi,\rho}=\{i\in [0,n-1]\vert i,i+1\in \NFix(\rho),i+1\in \Fix(\pi)\}$, $B'_{\pi,\rho}=\{i\in [0,n-1]\vert i,i+1\in \NFix(\rho),i\in \Fix(\pi)\}$.
\end{corollary}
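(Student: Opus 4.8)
The plan is to obtain the statement as the ordinary ($J=\emptyset$) specialization of Theorem~\ref{T:KLBoolean}, in exact analogy with the way Corollary~\ref{C:Anclas} is extracted from Corollary~\ref{C:An} in type $A$. First I would record that $B_n$ is a tree-Coxeter group, its Coxeter graph being the path $s_0-s_1-\cdots-s_{n-1}$; hence Theorem~\ref{T:KLBoolean} applies. Setting $J=\emptyset$ makes every column a $\times$-column, so $b_h=0$ and $\overline c(\overline\pi,\overline\rho)=0$ identically, and the theorem collapses to $P_{\pi,\rho}=\prod_{h\ge 1} f_{h+1}^{a_h(\overline\pi,\overline\rho)}$. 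I would then split according to the two maximal boolean reflections isolated before Lemma~\ref{L:Bn}: if $\pi\le\rho\le t_1$ I root the path at $s_{n-1}$, and if $\pi\le\rho\le t_2$ I root it at $s_0$. In each case Lemma~\ref{L:Bn} supplies the dictionary converting the diagram entries $\overline\pi(s_i),\overline\rho(s_i)\in\{0,1_l,1_r,2\}$ into the window statistics $\Fix$, $\NFix$ and the (bottom/top) excedances of $\pi$ and $\pi^{-1}$.

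The decisive structural observation is that in Theorem~\ref{T:KLBoolean} every factor $f_{h+1}$ with $h\ge 2$ is produced by a summand of $a_h$ carrying a genuine branch vertex — a column with two distinct neighbours on the left (the chain $(\not 2,*)^n$ together with the chain of $2$'s). A path has no vertex of degree $\ge 3$, so these summands cannot be realized, and one checks that in a path only the adjacency pattern $(2,\cdot)\to(2,0)$ contributes, each occurrence giving a single factor $f_2=1+q$. Thus $a_h=0$ for all $h\ge 2$ and $P_{\pi,\rho}=(1+q)^{a_1}$, with $a_1$ equal to the number of such adjacencies. Translating this count through Lemma~\ref{L:Bn} and reading towards the root, the pattern occurs at precisely the indices $i$ with $i,i+1\in\NFix(\rho)$ and $i+1\in\Fix(\pi)$ when the root is $s_{n-1}$, which is the set $B_{\pi,\rho}$; for $t_2$ the root is $s_0$, the orientation of the path is reversed, and — together with the fact that $\NFix$ is defined in $B_n$ via $\rho(i)=i$ rather than $\rho(i+1)=i+1$ — the $\Fix(\pi)$-condition lands on $i$ instead of $i+1$, giving $B'_{\pi,\rho}$. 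This yields the two formulas $P_{\pi,\rho}=(1+q)^{\#B_{\pi,\rho}}$ and $P_{\pi,\rho}=(1+q)^{\#B'_{\pi,\rho}}$.

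The step I expect to be the main obstacle is the behaviour at the special vertex $s_0$, which is the only place where the computation departs from the symmetric-group case of Corollary~\ref{C:Anclas}. The edge $s_0-s_1$ carries label $m(s_0,s_1)=4$, and by Lemma~\ref{L:Bn} the $s_0$-column encodes the sign data of the window notation (the number and position of the negative entries) rather than an ordinary excedance. I would therefore verify the $i=0$ clauses of the dictionary by hand, confirming that ``a single negative entry $-1$'', ``a single negative entry in the first place'', ``exactly two negative entries'' and ``no negative entry'' reproduce respectively $1_l,1_r,2,0$, and hence the memberships $0\in\Fix$, $0\in\NFix$; and I would check that this boundary column, despite the label $4$, introduces no additional Catalan factor beyond those already accounted for. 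Since $t_1$ and $t_2$ differ exactly in whether $s_0$ or $s_{n-1}$ is the root, getting this boundary analysis and its index-shift right is precisely what distinguishes the two cases; once it is settled, the rest of the argument is the verbatim path analysis already carried out in type $A$, specialized to $J=\emptyset$.
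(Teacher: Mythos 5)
Your overall strategy --- specialize Theorem~\ref{T:KLBoolean} to $J=\emptyset$ (so that $b_h=0$ and $\overline c=0$ and the formula collapses to $\prod_h f_{h+1}^{a_h}$), root the path at $s_{n-1}$ for $t_1$ and at $s_0$ for $t_2$, and translate via Lemma~\ref{L:Bn} --- is exactly the route the paper intends; indeed the paper offers no explicit proof of this corollary and treats it as immediate from the theorem and the lemma, just as Corollary~\ref{C:Anclas} is extracted in type $A$. However, your key structural claim is false as stated, and this leaves a genuine gap. You assert that every summand of $a_h$ producing a factor $f_{h+1}$ with $h\ge 2$ requires a branch vertex because it carries two chains, $(\not 2,*)^n$ and $(2,\not 2)^{h+1}$, feeding the same column. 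But the paper explicitly allows $n\ge 0$, so the $(\not 2,*)$-chain may be empty, and the first and second summands of $a_h$ then degenerate to the purely linear patterns $(2,\not 2)^{h+1}\rightarrow(1_*,0)$ and $(2,\not 2)^{h}\rightarrow(2,0)$. These are realizable on a path with $h$ arbitrarily large, and they genuinely occur for boolean pairs in $B_n$: for instance $\rho=t_1$ and $\pi=s_0s_1\cdots s_{k}$ ($k\ge 1$) produce a diagram consisting of a chain of columns $(2,1_l)$ ending in the root column $(1_l,0)$, which matches the first summand of $a_h$ with the $(\not 2,*)$-branch empty and $h+1=k+1$. So ``a path has no vertex of degree $\ge 3$'' does not kill these summands, and the conclusion $a_h=0$ for $h\ge 2$ is unproved. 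Note also that for such a pair the set $B_{\pi,\rho}$ is empty (no two consecutive elements of $\NFix(\rho)$ with the required fixed-point condition on $\pi$ are created), so to reach the stated formula you must show that these linear $(2,\not 2)$-chains terminating in a $(1_*,0)$ column contribute a trivial factor, not $f_{h+1}$; this requires an argument (or a more careful reading of which occurrences are counted by $a_h$) that your proposal does not supply.

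Two smaller points. First, your identification of $a_1$ with the number of adjacencies ``$(2,\cdot)\rightarrow(2,0)$'' silently discards the contribution of a target column $(1_*,0)$ preceded by a single $(2,\not 2)$ column, which is the $h=1$ instance of the same problematic summand; the bookkeeping must show this never produces an extra $(1+q)$ beyond those indexed by $B_{\pi,\rho}$. Second, the passage from $B_{\pi,\rho}$ to $B'_{\pi,\rho}$ in the $t_2$ case is asserted rather than verified: since Lemma~\ref{L:Bn} encodes $\overline\rho(s_i)=2$ for $t_1$ via $\rho(i+1)=i+1$ but the set $\NFix$ in $B_n$ is defined via $\rho(i)=i$, there is an index shift of one between the diagram position and the window position, and it is precisely this shift (applied once for $\rho$ and once for $\pi$, in opposite orientations for the two roots) that moves the condition from $i+1\in\Fix(\pi)$ to $i\in\Fix(\pi)$. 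That computation should be written out, including the boundary column $s_0$ where the dictionary is phrased in terms of the number and placement of negative entries rather than fixed points.
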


Now we consider the Coxeter group $D_n$. It is easy to check that the unique boolean reflection of length $2n-1$ is  $s_0s_1s_2\cdots s_{n-1}s_{n-2}\cdots s_2s_1s_0$: in fact, let $t$ any  boolean reflection with the same length. Then any reduced word of $t$ contain both occurrences of $s_0$ or $s_1$ outside the occurrences (maybe only one) of $s_2$. Then move, by commutativity, these occurrences to the leftmost and rightmost place. The central part can be identified with an element of $A_{n-1}$ and we can conclude easily.

\begin{lemma}
\label{L:Dn}
Let $\pi\in D_n$. Then $\pi$ is a boolean element if and only if $\#(\vert \pi([i])\vert\cap [i])\ge i-1$ for all $i\le n$ and the only negative elements in the window notation are in the first two columns and in the entries containing $-1,-2$ (if the first two entries are not $\pm 1,\pm 2$ then these have the same sign).

Let $\overline \pi$ be a reduced word of $\pi$, subword of $s_0s_1\cdots s_{n-1}\cdots s_1s_0$, and let $i\ge 2$. Then $\overline \pi(s_i)=1_l$ if $i+1$ is a top excedance of $\pi$; $\overline \pi(s_i)=1_r$ if $i+1$ is a top excedance of $\pi^{-1}$; $\overline \pi(s_i)=2$ if $\pi(i+1)=i+1$ and $\pi([i+1,n])\ne[i+1,n]$; $\overline \pi(s_i)=0$ if $\pi([i+1,n])=[i+1,n]$.
If $i\le 1$ then there is an occurrence of $s_1$ on the right if $\pi(1)\ge 3$ or $\pi(2)\le -3$; there is an occurrence of $s_1$  on the left if $\pi^{-1}(1)\ge 3$ or $\pi^{-1}(2)\le -3$ or $\pi(1,2)\in\{(2,1),(-1,-2)\}$;
there is an occurrence of $s_0$ on the right if $\pi(1)\le -3$ or $\pi(2)\le -3$; there is an occurrence of $s_0$  on the left if $\pi^{-1}(1)\le -3$ or $\pi^{-1}(2)\le -3$ or $\pi(1,2)\in\{(-2,-1),(-1,-2)\}$.
\end{lemma}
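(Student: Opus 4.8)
The plan is to argue by induction, mirroring the proofs of Lemma~\ref{L:TranAn} and Lemma~\ref{L:Bn}, the one genuinely new ingredient being the fork $\{s_0,s_1\}$ of the Coxeter graph of $D_n$ (both generators are leaves attached to $s_2$). I record the two facts that drive everything. First, the parabolic subgroup $\langle s_1,\dots,s_{n-1}\rangle$ is the sign-preserving symmetric group $A_{n-1}\cong S_n$. Second, $s_0$ is the unique generator carrying a sign change, with window $[-2,-1,3,4,\dots,n]$, so that right multiplication by $s_0$ sends $[a_1,a_2,a_3,\dots]$ to $[-a_2,-a_1,a_3,\dots]$ and left multiplication by $s_0$ interchanges the values $1\leftrightarrow -2$ and $2\leftrightarrow -1$ wherever they occur. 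In the boolean word $s_0s_1\cdots s_{n-1}\cdots s_1s_0$ the two occurrences of $s_0$ are the extreme letters, so by the subword characterization of Bruhat order I may choose a reduced subword $\overline\pi$ that factors as $\overline\pi=s_0^{\epsilon_L}\,\overline{\pi'}\,s_0^{\epsilon_R}$ with $\overline{\pi'}$ a reduced subword of the $A_{n-1}$ boolean word $s_1\cdots s_{n-1}\cdots s_1$; the product is reduced, whence $\pi=s_0^{\epsilon_L}\pi's_0^{\epsilon_R}$ with $\pi'\in S_n$ boolean. This reduces the whole analysis to Lemma~\ref{L:TranAn} applied to $\pi'$, together with the two $s_0$-multiplications on the outside.

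For the first part I would read the window shape off this factorization. Every sign in the window of $\pi$ is produced by an occurrence of $s_0$: a right $s_0$ signs positions $1,2$, a left $s_0$ signs the values $1,2$; since $\pi'$ is sign-preserving and satisfies $\#(\pi'([i])\cap[i])\ge i-1$ for all $i$ by Lemma~\ref{L:TranAn}, the inequality $\#(|\pi([i])|\cap[i])\ge i-1$ is inherited after taking absolute values, and the only admissible negative entries are those in positions $1,2$ or those equal to $-1,-2$. The evenness of the number of sign changes that defines $D_n$ then forces, when the first two entries are not $\pm 1,\pm 2$, that these two entries share a sign, which is exactly the parenthetical clause. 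The converse direction is obtained by reversing the two $s_0$-multiplications to exhibit any window of the stated shape as a reduced subword of the boolean word.

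For the second part, the indices $i\ge 2$ and the fork $i\le 1$ are handled separately. For $i\ge 2$ one has $\overline\pi(s_i)=\overline{\pi'}(s_i)$, and since the coordinate $i+1\ge 3$ is positive and untouched by $s_0$ (the only edge case, $\pi'(i+1)\in\{1,2\}$, yields a non-excedance both before and after), the top-excedance status of $i+1$ for $\pi$ and for $\pi^{-1}$ agrees with that for $\pi'$ and $\pi'^{-1}$; Lemma~\ref{L:TranAn} then gives the four listed cases verbatim. For $i\le 1$ I would combine the data $\epsilon_L,\epsilon_R$ (the occurrences of $s_0$) with $\overline{\pi'}(s_1)\in\{0,1_l,1_r,2\}$ (read from $\pi'$ at index $2$ via Lemma~\ref{L:TranAn}) and translate through $\pi=s_0^{\epsilon_L}\pi's_0^{\epsilon_R}$ into conditions on $\pi(1),\pi(2)$ and $\pi^{-1}(1),\pi^{-1}(2)$: an $s_1$ on the right swaps positions $1,2$ and is forced precisely when $\pi(1)\ge 3$ or $\pi(2)\le -3$, the mirror statement with $\pi^{-1}$ (plus the pure-swap patterns $\pi(1,2)\in\{(2,1),(-1,-2)\}$) giving $s_1$ on the left, and an $s_0$ on a given side being present exactly when a sign must be created among positions $1,2$ (right: $\pi(1)\le -3$ or $\pi(2)\le -3$) or among the values $1,2$ (left: $\pi^{-1}(1)\le -3$, or $\pi^{-1}(2)\le -3$, or $\pi(1,2)\in\{(-2,-1),(-1,-2)\}$).

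I expect the fork analysis for $i\le 1$ to be the main obstacle. Because $s_0$ and $s_1$ both act on coordinates $1,2$ and do \emph{not} commute (they share the neighbour $s_2$), the four states of each generator (absent, left, right, both) are coupled, and one must match every admissible pattern of the leading pairs $(\pi(1),\pi(2))$ and $(\pi^{-1}(1),\pi^{-1}(2))$ to a unique such combination while respecting the even-sign parity of $D_n$. The delicate overlaps are exactly the boundary configurations $\pi(1,2)\in\{(2,1),(-1,-2)\}$ and $\{(-2,-1),(-1,-2)\}$, where a single left multiplication simultaneously permutes and signs; verifying that precisely these trigger the corresponding fork occurrences is the most error-prone bookkeeping and is what ultimately produces the sign-agreement clause of the first part.
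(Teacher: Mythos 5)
Your overall strategy --- peel the two extremal occurrences of $s_0$ off a reduced subword of $s_0s_1\cdots s_{n-1}\cdots s_1s_0$, so that $\pi=s_0^{\epsilon_L}\pi's_0^{\epsilon_R}$ with $\pi'$ boolean in $\langle s_1,\dots,s_{n-1}\rangle\cong S_n$, and then quote Lemma \ref{L:TranAn} --- is exactly the reduction the paper intends (the paragraph preceding the lemma performs this decomposition for the maximal boolean reflection, and the paper then states the lemma without further proof, just as for $B_n$). The first part and the $i\ge2$ part of your sketch are essentially sound, up to one slip: the edge case you need to track is the position of the \emph{value} $i+1$ (i.e.\ $\pi'^{-1}(i+1)\in\{1,2\}$, where a right $s_0$ negates and transposes it), not the value $\pi'(i+1)\in\{1,2\}$; and your claim that this case ``yields a non-excedance both before and after'' is false (for $\pi'=[2,3,1]$ one has $\pi'(3)=1$, yet $3$ is a top excedance of $\pi'^{-1}=[3,1,2]$).

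The genuine gap is in the fork case $i\le1$, which is the only new content of this lemma relative to Lemmas \ref{L:TranAn} and \ref{L:Bn} and which you defer entirely to ``bookkeeping''. Moreover, the bookkeeping you propose is organized around a false premise: in $D_n$ the generators $s_0$ and $s_1$ \emph{do} commute ($m(s_0,s_1)=2$; two generators commute precisely when they are \emph{not} joined by an edge of the Coxeter graph, and $s_0$, $s_1$ are each joined to $s_2$ but not to each other --- indeed $s_0s_1=s_1s_0=[-1,-2,3,\dots,n]$). Consequently the coupling among the occurrence patterns of $s_0$ and $s_1$ does not come from braid relations between them; it comes from reducedness of the chosen subword: $s_0^{\epsilon_L}\overline{\pi'}s_0^{\epsilon_R}$ with $\epsilon_L=\epsilon_R=1$ is reduced only if an occurrence of $s_2$ separates the two $s_0$'s (otherwise they cancel), and likewise for the two occurrences of $s_1$. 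These reducedness constraints are precisely what generate the case distinctions in the statement (e.g.\ $\pi(1)\le-3$ versus $\pi(1,2)\in\{(-2,-1),(-1,-2)\}$). Without identifying this mechanism, the promised matching of leading window patterns to occurrence patterns cannot be carried out as described, so the $i\le1$ case remains unproved.
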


\begin{corollary}
Let $\pi,\rho\in D_n$ be two boolean elements such that $\pi\le \rho$. Then the Kazhdan--Lusztig polynomial $P_{\pi,\rho}$ is given by
$$
P_{\pi,\rho}(q)=(1+q)^{D_{\pi,\rho}}(1+2q)^{D'_{\pi,\rho}}
$$ 
where $D_{\pi,\rho}$ is the number of indices $i$ such that $\rho(i)=i$, $\rho(i+1)=i+1$, $\rho([i+2,n])\ne [i+2,n]$ and $\pi([i+2,n])=[i+2,n]$ incremented by $1$ if $\rho(1)=1, \rho(2)<2, \rho(3)\ne 3$ and $\pi((1,2))\ne(-1,-2)$ or $\rho^{-1}(2)<-2,\vert\rho^{-1}(1)\vert=2$  and $\pi([3,n])=[3,n]$ or $\vert \rho^{-1}(1)\vert>2, \rho(2)\in \{-n,\dots,-3,-1,1\}$ and $\pi([3,n])=[3,n]$; $D'_{\pi,\rho}$ is $1$ if $\rho(1)=1$, $\rho(2)<2$, $\rho(3)=3$ and $\pi([3,n])=[3,n]$ and $D'_{\pi,\rho}=0$ in all other cases.
\end{corollary}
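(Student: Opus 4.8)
The plan is to derive the formula from Theorem~\ref{T:KLBoolean} via the dictionary of Lemma~\ref{L:Dn}, in the same spirit as Corollaries~\ref{C:Anclas} and~\ref{C:BnClas} for types $A$ and $B$. First I would observe that $D_n$ is a tree-Coxeter group: rooting its Coxeter graph at $s_{n-1}$ produces a path $s_{n-1}-s_{n-2}-\cdots-s_2$ carrying two pendant leaves $s_0,s_1$ at the single degree-three vertex $s_2$. Since the polynomials here are ordinary, we put $J=\emptyset$, so every column of the diagram of $(\overline\pi,\overline\rho)$ is marked $\times$; consequently $b_h=c=c'=c''=0$ and $\overline c(\overline\pi,\overline\rho)=0$ identically, and Theorem~\ref{T:KLBoolean} collapses to $P_{\pi,\rho}=\prod_{h\ge1}f_{h+1}^{a_h}$. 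As $f_2=1+q$ and $f_3=1+2q$, it then suffices to prove that $a_1=D_{\pi,\rho}$, that $a_2=D'_{\pi,\rho}\in\{0,1\}$, and that $a_h=0$ for every $h\ge3$.

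Next I would read off the diagram entries from the window notation by Lemma~\ref{L:Dn}. Along the path $s_2-s_3-\cdots-s_{n-1}$ the translation is identical to the type-$A$ case: an index $i$ with $\rho(i)=i$, $\rho(i+1)=i+1$, $\rho([i+2,n])\ne[i+2,n]$ and $\pi([i+2,n])=[i+2,n]$ yields a pair of adjacent columns both of the form $(2,0)$, and, exactly as in Corollary~\ref{C:Anclas}, each such index contributes one factor $f_2=1+q$. Summing these recovers the ``path part'' of $D_{\pi,\rho}$. By contrast the three exceptional columns $s_0,s_1,s_2$ are filled by the $i\le1$ clauses of Lemma~\ref{L:Dn}, hence are governed by the signs and relative order of $\rho(1),\rho(2),\rho^{-1}(1),\rho^{-1}(2)$ and by whether $\pi$ maps $(1,2)$ to $(-1,-2)$.

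The crux, and the step I expect to be hardest, is the local analysis at the fork $s_2$, the unique place where $D_n$ differs from the path-shaped graphs of types $A$ and $B$. The decisive computation is that a \emph{forked} triple of $(2,0)$ columns --- the centre $s_2$ together with both leaves $s_0,s_1$ --- when closed off on the right (that is, when $\rho(3)=3$ and $\pi([3,n])=[3,n]$, so the chain terminates at $s_2$) produces a single factor $f_3=1+2q$ rather than the $(1+q)^2$ that these three columns would give along a path; this is precisely the instance $f_2(1+q)-q^2=f_3$ of Lemma~\ref{L:Propf}, and it is the origin of the $(1+2q)$ term and of $D'_{\pi,\rho}=1$. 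When instead the fork is not closed off, i.e.\ the path continues with $\rho(3)\ne3$, the extra leaf is absorbed into the chain and contributes merely one additional factor $1+q$; this is the ``$+1$'' increment in $D_{\pi,\rho}$, and its three alternative clauses correspond, through Lemma~\ref{L:Dn}, to the three admissible sign/excedance patterns of $s_0,s_1$ relative to $s_2$ (distinguished by whether the negative entry sits in $\rho$ or in $\rho^{-1}$, and by the value $|\rho^{-1}(1)|$). The bulk of the proof will be the bookkeeping that these four clauses are mutually exclusive and exhaust the fork configurations with $\overline c=0$, while every remaining pattern either forces an entry incompatible with $\pi\le\rho$ or violates a descent condition and so falls under the vanishing clause (cf.\ Proposition~\ref{P:PropertiesKL}). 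Finally, since $s_0$ and $s_1$ are leaves, no Catalan chain running through the fork can have length exceeding two; this caps the surviving factor at $f_3$ and forces $a_h=0$ for all $h\ge3$, leaving only the factors $1+q$ and $1+2q$ as claimed.
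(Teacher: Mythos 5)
Your proposal is correct and follows exactly the route the paper intends: the corollary is stated without proof as an immediate consequence of Theorem~\ref{T:KLBoolean} (specialized to $J=\emptyset$, where all the $\circ$-dependent counts $b_h,c,c',c''$ vanish) combined with the dictionary of Lemma~\ref{L:Dn}, with the only new work being the local analysis at the degree-three vertex $s_2$ that produces $f_3=1+2q$ via Lemma~\ref{L:Propf}. You have identified the correct key computation and the correct source of each clause, so your outline matches the paper's (implicit) proof in substance.
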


\section{Poincar\'e polynomials}
\label{S:Poincare}
Given $v\in W$, let $F_v(q)=\sum_{u\le v} q^{l(v)} P_{u,v}$. It is well known that, if $W $ is  any Weyl or affine Weyl group, $F_v(q)$ is the intersection homology Poincar\'e polynomial of the Schubert variety indexed by $v$ (see \cite{Kazhdan1980}). 
In this section we compute the Poincar\'e polynomial for any boolean element in a Coxeter group whose Coxeter graph is a tree with at most one vertex having more than two adjacent vertices (such groups include all classical Weyl groups).

Let $v\in W$ be a boolean element and consider the diagram of $(\epsilon, \overline v)$. For convenience we will not depict the second row of each column which is always $0$ and we omit all symbols $\times$. We will call it the diagram of $v$.

Let $v$ be a boolean element and let $s$ be the element of $S$ associated to one of the leftmost vertices in the diagram of $v$. We set $F_{v, s\ne }=\sum q^{l(v)} P_{u,v}$ where the sum runs over all elements $u\le v$ such that $\overline u(s)\ne 0$ and $F_{v,s0}=\sum q^{l(v)} P_{u,v}$ where the sum runs over all elements $u\le v$ such that $\overline u(s)= 0$.

Now consider a diagram $d$. Delete all entries equal to $0$ and delete all edges whose left vertex is not a cell containing $2$. Let $d_1,\dots, d_k$ be the connected components that remain. We refer to them as the \emph{essential components} of $d$.

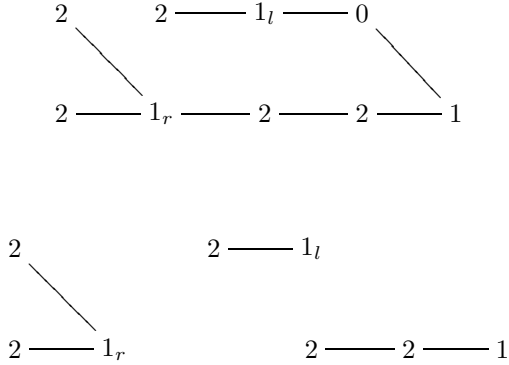
\begin{figure}[hbt]
$$
\xymatrix{ 
2\ar@{-}[dr] & {2}\ar@{-}[r]    & {1_l}\ar@{-}[r] & {0}\ar@{-}[dr]
 \\ {2}\ar@{-}[r]  & {1_r} \ar@{-}[r] & {2} \ar@{-}[r] & {2}\ar@{-}[r] &{1}\\
 \\
}
$$
$$
\xymatrix{
{2}\ar@{-}[dr] & & {2}\ar@{-}[r]    & {1_l} \\
{2}\ar@{-}[r]  & {1_r} & & {2} \ar@{-}[r] & {2}\ar@{-}[r] &{1}
}
$$
\caption{A diagram and its essential components.}
\end{figure}

\begin{lemma}
\label{L:EssentialComponents}
Let $v\in W$ be a boolean element and let $d$ be the diagram of $\overline v$. Let $d_1,\dots, d_k$ be the essential components of the diagram $d$. Let $v_1,\dots, v_k$ be the boolean reflections corresponding to $d_1,\dots, d_k$. Then
$$F_v(q)=\prod_{i=1}^{k}F_{v_i}(q).$$
\end{lemma}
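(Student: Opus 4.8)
The plan is to prove the multiplicativity of the intersection homology Poincar\'e polynomial $F_v(q)=\sum_{u\le v}q^{l(v)}P_{u,v}$ over boolean elements by showing that the product structure of the Kazhdan--Lusztig polynomials established in Theorem~\ref{T:KLBoolean} (with $J=\emptyset$, so that $P_{u,v}=\prod_{h\ge 1}f_{h+1}^{a_h}$) localizes to the essential components of the diagram. The key observation is that, once we delete zero entries and cut every edge whose left vertex is not a cell containing $2$, the combinatorial statistic $a_h(\overline u,\overline v)$ that appears in Theorem~\ref{T:KLBoolean} decomposes as a sum over the essential components $d_1,\dots,d_k$: each subdiagram counted by $a_h$ is a connected chain of cells whose leftmost cells carry a $2$ and which therefore lies entirely inside a single $d_i$. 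Hence for any fixed $u\le v$ with diagram entries $(\overline u(s),\overline v(s))$, writing $u_i$ for the restriction of $u$ to the indices appearing in $d_i$, we get $P_{u,v}=\prod_{i=1}^k P_{u_i,v_i}$.

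The core of the argument is then a factorization of the summation itself. First I would argue that the set $\{u\in W : u\le v\}$ is in bijection with the product $\prod_{i=1}^k\{u_i : u_i\le v_i\}$, together with the contribution of the cells that were \emph{discarded} when forming the essential components (the $0$-cells, and the cells separated by cut edges). A subword $\overline u$ of $\overline v$ is recovered by independently choosing, for each column, whether the corresponding generator occurs and with which multiplicity, subject only to the Bruhat-order (subword) constraint; because the cut edges are precisely those across which the Bruhat relation imposes no coupling on the relevant $a_h$-statistics, the choices in distinct essential components are independent. The discarded columns either force $P_{u,v}=0$ unless certain entries vanish, or contribute a factor that is absorbed into the length normalization $q^{l(v)}$; tracking the exponent of $q$ carefully, the total length $l(v)$ splits as $\sum_i l(v_i)$ plus the lengths contributed by the deleted structure, and this bookkeeping is what lets the normalized sum factor cleanly.

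With these two ingredients in place the computation is routine:
\begin{align*}
F_v(q)=\sum_{u\le v}q^{l(v)}P_{u,v}
=\sum_{(u_1,\dots,u_k)}\prod_{i=1}^k q^{l(v_i)}P_{u_i,v_i}
=\prod_{i=1}^k\Bigl(\sum_{u_i\le v_i}q^{l(v_i)}P_{u_i,v_i}\Bigr)
=\prod_{i=1}^k F_{v_i}(q),
\end{align*}
where the middle equality uses both the bijection on index sets and the product formula for $P_{u,v}$.

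The main obstacle I expect is the rigorous justification of the independence of choices across essential components, i.e.\ proving the bijection $\{u\le v\}\simeq\prod_i\{u_i\le v_i\}$ and checking that the normalization $q^{l(v)}$ distributes correctly over the factors. The subtlety is that a cut edge may connect a cell containing $1_l$ or $1_r$ to a cell containing $2$, and one must verify using Theorem~\ref{T:KLBoolean} (or more directly Lemmas~\ref{L:10} and \ref{L:11}) that such an edge genuinely carries no multiplicative dependence for the quantity being summed --- in other words, that the $a_h$-counting subdiagrams never straddle a cut edge. I would isolate this as the one nontrivial verification, reducing it to the definition of $a_h$ (every subdiagram there begins with a run of $(2,\not 2)$ cells, so its leftmost edge is never cut) and to the fact that summing over the excluded columns contributes the same geometric factor to every term, hence factors out of the sum without altering the per-component Poincar\'e polynomials.
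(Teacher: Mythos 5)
Your strategy is sound but genuinely different from the paper's. The paper proves the lemma by induction on $l(v)$: it peels off a leftmost vertex (or a run $(2)^h$ of leftmost cells sharing a right neighbour $s'$), and controls the coupling between that run and the rest of the diagram by introducing the refined sums $F_{v,s'\ne}$ and $F_{v,s'0}$ (summing only over $u$ with $\overline u(s')\ne 0$, resp.\ $\overline u(s')=0$), applying Proposition \ref{P:KLFromDesc} and Lemma \ref{L:Propf} to recombine them into $\widehat F_{v_1}\cdot\overline F_v$. You instead factor the sum globally, using (i) the additivity of the statistics $a_h$ over essential components, hence $P_{u,v}=\prod_i P_{u_i,v_i}$, and (ii) a product decomposition of the index set $\{u\le v\}$. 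Point (i) is correct and you identify the right reason: every subdiagram counted by $a_h$ (for $J=\emptyset$ only the first two summands survive, and $b_h=\overline c=0$) consists of a centre cell with nonzero $\overline v$-entry together with its left neighbours carrying $(2,\not 2)$, and edges out of $2$-cells are exactly the edges that are never cut, while the $(\not 2,*)$ neighbours are irrelevant because $n$ is arbitrary. The advantage of your route is conceptual transparency; the advantage of the paper's is that the inductive bookkeeping with $F_{v,s\ne}$, $F_{v,s0}$ never requires one to exhibit the bijection $\{u\le v\}\simeq\prod_i\{u_i\le v_i\}$ explicitly.

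That bijection is the one step you defer, and it does need an argument, though a short one: an arbitrary tuple of subwords $\overline u_i$ of the $\overline v_i$ assembles into a subword of $\overline v$, and one must check that it is reduced and canonical, and that distinct tuples give distinct elements. The key observation is that both the reducedness constraint (a cell with $\overline u(s)=2$ forces $\overline u(s')\ne 0$ for its right neighbour $s'$) and the canonical-position convention for cells with $\overline u(s)=1$ couple a column only to its right neighbour through cells whose $\overline v$-entry is $2$ --- precisely the edges retained in the essential components --- so the constraints decouple across components and the map $u\mapsto(u_1,\dots,u_k)$ is a bijection. Also, your worry about the ``discarded'' columns contributing a factor to be absorbed into the normalization is moot: a deleted column has $\overline v(s)=0$, hence $\overline u(s)=0$ for every $u\le v$, so it contributes nothing to $l(u)$ or to $l(v)$, and $l(v)=\sum_i l(v_i)$ holds on the nose. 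With these two points made explicit your argument closes and gives a complete alternative proof.
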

\begin{proof}
We use induction on  $l(v)$. If $l(v)=1$ there is nothing to prove. Now let $l(v)>1$ and let $d_1,\dots,d_k$ be the essential components of $d$ associated to $v$. Let $s$ be the element associated to one leftmost vertex of $v$ and let $d_1$ be the essential component containing such vertex. In this proof we denote by $\overline F_v(q)$  the polynomial corresponding to the diagram $d\setminus d_1$ (known by induction) and by $\widehat F_v(q)$  the polynomial corresponding to $d_1$. If $\overline v(s)=1$  then
by Theorem \ref{T:KLBoolean} or by Lemmas \ref{L:10} and \ref{L:11} and recursion in Proposition \ref{P:KLFromDesc} we have that
$$
F_{v}(q)=(1+q)\overline F_v(q), \quad F_{v,s\ne}(q)=q\overline F_v(q), \quad  F_{v,s0}(q)=\overline F_v(q).
$$
If $\overline v(s)=2$ then we can assume that $d_1$ starts with 
$$ \xymatrix{ {(2)^h}\ar@{-}[r] & {*}},$$
for $h\ge 1$ (otherwise choose another element in $S$). Denote by $s'$  the only element on the right of $s$. By Theorem \ref{T:KLBoolean} and by induction we have
\begin{align*}
F_{v}(q)=& (1+q)^{2h}F'_{v,s'\ne}(q)+(1+q)^h f_{h-\delta}F'_{v,s'0}(q)\\
=&(1+q)^{2h}\widehat F'_{v,s'\ne}(q)\overline F_{v}(q)+(1+q)^h f_{h-\delta}\widehat F'_{v,s'0}(q)\overline F_{v}(q)\\
=&\widehat F'_{v}(q) \overline F_{v}(q),
\end{align*}
where $F'_{v}$ is the polynomial associated to $d$ after deleting all the vertices $(2)^h$ and $\delta$ is determined uniquely by $v$ (and Theorem \ref{T:KLBoolean}). The first factor $(1+q)^{2h}$ denotes the possibility to have all pairs $(2,0)$, $(2,1_l)$, $(2,1_r)$ and $(2,2)$ in the diagram of $(\overline u,\overline v)$ in all $h$ leftmost columns; the second factor $(1+q)^h$ denotes the possibility to have only the pairs $(2,0)$ and $(2,1_l)$.
Similar formulas can be computed for $F_{v,s\ne}(q)$ and $F_{v,s0}$. Therefore we can apply the induction (it is possible that more indices $s\ne$ or $s0$ are necessary; the proof does not change).
\end{proof}

Lemma \ref{L:EssentialComponents} tells us that if we know $F_t(q)$ for all boolean reflection $t$, then it is easy to compute $F_v(q)$ for all boolean elements.

\begin{lemma}
\label{L:22}
Let $v\in W$ be a boolean reflection and suppose that its diagram $d$ has one leftmost vertex $s$ such that if
\xymatrix{ {(2)^h}\ar@{-}[r] & {*}} is a subdiagram of $d$ containing $s$, then necessarily $h=1$. Then
$$
F_{v}(q)=(1+q)^2 F'_{v}(q)  
$$
where $F'_{v}(q)$ is the polynomial associated to diagram $d$ after deleting the vertex $s$.
\end{lemma}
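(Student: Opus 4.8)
The plan is to mirror the recursive computation used in the proofs of Theorem \ref{T:KLBoolean} and Lemma \ref{L:EssentialComponents}, specialised to the case where the peeled vertex is a single value-$2$ leaf. First I would record the shape forced by the hypotheses: since $v$ is a boolean reflection its diagram carries the entry $1_l$ at the root $s'$ and the entry $2$ at every other vertex, so the requirement that the chain $\xymatrix{(2)^h\ar@{-}[r]&*}$ through the leftmost vertex $s$ has $h=1$ says exactly that the unique neighbour of $s$ on the right is the root $s'$, with $\overline v(s')=1_l$. Writing $v'$ for the boolean reflection whose diagram is $d$ with the column $s$ removed (the root $s'$, still with entry $1_l$), the goal becomes $F_v=(1+q)^2F_{v'}$, and I would prove it by partitioning the interval $[\epsilon,v]$ according to the entry $\overline u(s)\in\{0,1_l,1_r,2\}$ of the peeled leaf and according to whether the root entry $\overline u(s')$ is $0$ or $1_l$.

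For the root-used block $\overline u(s')=1_l$ the argument is clean. Here the root column is $(1_l,1_l)$, which is deleted by Lemma \ref{L:11}, and after its removal the leaf $s$ no longer sits below a central $(1_*,0)$ cell; by Theorem \ref{T:KLBoolean} the four states of $s$ (the $(2,2)$ column being deleted, the others being isolated $(2,\not2)$ cells) all leave $P_{u,v}$ equal to $P_{u',v'}$, where $u'$ is the corresponding element of $[\epsilon,v']$. Summing the length weights $q^{0},q^{1},q^{1},q^{2}$ of the four states then produces the factor $1+2q+q^2=(1+q)^2$ in front of $\sum_{u'}q^{l(u')}P_{u',v'}$ restricted to $\overline{u'}(s')=1_l$, so this block already equals $(1+q)^2$ times the corresponding block of $F_{v'}$.

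The delicate part — and the step I expect to be the main obstacle — is the root-unused block $\overline u(s')=0$, where the leaf genuinely changes the Catalan factor attached to $s'$: adding the extra $(2,\not 2)$ child raises the index in the factor $f_{h+1}$ by one. A term-by-term factorisation fails, because $f_{h+2}\neq(1+q)f_{h+1}$ in general; instead one must sum over all admissible configurations of the remaining children of $s'$ at once and invoke the recursion $f_{h+1}(q)(1+q)-\mu(f_{h+1}(q))q^{\frac{h+1}{2}+1}=f_{h+2}(q)$ of Lemma \ref{L:Propf}. The $\mu$-correction it produces must be matched against the sum $\widetilde M_{u,v}$ of Proposition \ref{P:KLFromDesc}, coming from those $w$ whose diagram agrees with that of $v$ off the peeled region, exactly as in the analysis of the subcase \eqref{E:Case2} in the proof of Theorem \ref{T:KLBoolean}; the small values $f_0(q)=f_1(q)=1$ and $f_2(q)=1+q$ are what let the telescoping produce precisely the factor $(1+q)$ from the bump $f_{h+1}\mapsto f_{h+2}$ local to the root. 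Combining this with the leaf's own length generating function $1+q$ collapses the root-unused block to $(1+q)^2$ times the corresponding block of $F_{v'}$; adding the two blocks gives $F_v=(1+q)^2F_{v'}$. The crux is verifying that the $\mu$-bookkeeping in this last block yields exactly $(1+q)^2$ and not some other polynomial, and it is here that the precise local structure at the root imposed by the $h=1$ hypothesis is used.
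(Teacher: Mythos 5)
Your proof rests on a misreading of the hypothesis, and as a result it treats the one configuration in which the lemma is \emph{not} used (and where your computation, if carried out, would not close). In the paper's notation $(2)^h$ attached to a vertex $*$ denotes $h$ \emph{parallel} one-column branches all adjacent to $*$ (see the displayed ``to mean'' convention in Section \ref{S:KLMain} and its use in Proposition \ref{P:Poincare}, where $h$ counts the entries $2$ in the cells adjacent to the exceptional vertex). So the hypothesis ``$h=1$ necessarily'' says that $s$ is the \emph{only} entry-$2$ child of its parent $s'$; it does not say that $s'$ is the root. Indeed the lemma is applied in Corollary \ref{C:Marietti} to peel the leftmost vertex off a long path $2\to 2\to\cdots\to 1_l$, where the neighbour of $s$ carries a $2$. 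In the case you restrict to ($s$ adjacent to the root and, by your own hypothesis, its only child) the stated identity is actually false: for $v=s_1s_2s_1$ one has $F_v=(1+q)(1+q+q^2)$, not $(1+q)^2F_{s_2}=(1+q)^3$ --- which is why the paper computes $F_{s_1s_2s_1}$ separately as a base case.

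The error surfaces exactly in the block $\overline u(s')=0$ that you flag as ``the crux'' and leave unverified. In that block the leaf cannot carry the value $2$ (a word containing $s\cdots s$ with no $s'$ in between and only letters commuting with $s$ is not reduced), and $1_l$ and $1_r$ give the same group element, so the leaf's length generating function is $1+q$, not $(1+q)^2$. The second factor $1+q$ must therefore come from the Kazhdan--Lusztig polynomial itself: by Theorem \ref{T:KLBoolean} a single $(2,\neq 2)$ child of a $(2,0)$ column contributes $a_1$ and hence the factor $f_2=1+q$, while the bare $(2,0)$ column contributes $a_0$ and the factor $f_1=1$; this is where the hypothesis that the neighbour of $s$ carries a $2$ and has no other entry-$2$ child is used (a $(1_*,0)$ parent would only bump the factor from nothing to $f_1=1$, giving a total of $1+q$ for the block and breaking the identity). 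Note also that no $\mu$-bookkeeping or re-derivation via Proposition \ref{P:KLFromDesc} is needed: Theorem \ref{T:KLBoolean} is already proved at this point, so one simply reads off the ratio $P_{u,v}/P_{u',v'}$ from the change in the exponents $a_h$, and your worry that $f_{h+2}\neq(1+q)f_{h+1}$ is precisely what the hypothesis $h=1$ excludes, since only the bump $f_1\mapsto f_2=(1+q)f_1$ ever occurs. The correct decomposition is $F_v=(1+q)^2F'_{v,s'\neq}+(1+q)\cdot(1+q)F'_{v,s'0}$, with the two factors in the second summand arising for the two different reasons just described.
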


\begin{proof}
By Proposition \ref{P:KLFromDesc}, it is easy to check that
$$
F_{v}(q)=(1+q)^2 F'_{v,s\ne}+ (1+q) F'_{v,s0}(q)(1+q)=(1+q)^2F'_v(q)
$$
where the last factor $(1+q)$ is due to the contribution of 
\begin{tabular}  {|c|c|} \hline $2$ & $2$ \\\hline $*$ & $0$ \\ \hline\end{tabular} in the Kazhdan--Lusztig polynomial $P_{u,v}(q)$ according to Theorem \ref{T:KLBoolean}.
\end{proof}

As corollary of Lemmas \ref{L:EssentialComponents} and \ref{L:22} we have the following result due to Marietti \cite[Theorem 8.1]{Marietti2006}
\begin{corollary}
\label{C:Marietti}
Let $v\in S_{n+1}$ be a boolean element. Let $t$ be the boolean reflection $s_1\cdots s_n\cdots s_1$ with $s_i$ be the transposition $(i,i+1)$. Let $\overline v$ be the reduced word of $v$ subword of $t$. Then
$$
F_{v}(q)=(1+q)^{l(v)-2a(v)}(1+q+q^2)^{a(v)},
$$
where $a(v)$ is the number of patterns $(2,1_*)$ in $\overline v$.
\end{corollary}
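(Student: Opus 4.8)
The plan is to reduce to a single essential component by means of Lemma~\ref{L:EssentialComponents} and then to evaluate the Poincaré polynomial of each component separately. Concretely, Lemma~\ref{L:EssentialComponents} gives $F_v=\prod_{i=1}^{k}F_{v_i}$, where $v_1,\dots,v_k$ are the boolean reflections attached to the essential components $d_1,\dots,d_k$ of the diagram of $v$, so it suffices to compute each factor and then to check that the exponents add up correctly.

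First I would determine the shape of an essential component when the Coxeter graph is the path of $A_n$. An edge survives the pruning only if its left cell carries a $2$, so a component is a sub-path consisting of a run of $2$'s followed by one further cell. Using the reading of the entries in Lemma~\ref{L:TranAn}, two facts pin this down: the root cell is never a $2$ (since $\pi(n+1)=n+1$ forces $\pi([n])=[n]$), and a cell with entry $2$ is never immediately followed by a cell with entry $0$ (because $\overline v(s_i)=2$ means $\pi([i])\neq[i]$, whereas $\overline v(s_{i+1})=0$ together with $\pi(i+1)=i+1$ would give $\pi([i])=[i]$, a contradiction). Hence a maximal run of $2$'s can terminate only on a cell $1_*$, and no isolated cell is a $2$; every essential component therefore has diagram $2^{h}1_*$ with $h\ge 0$, and it contains a $(2,1_*)$ pattern exactly when $h\ge 1$. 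In particular the number of components with $h\ge 1$ is precisely $a(v)$.

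Next I would compute $F$ for the two kinds of building block by induction on the length, stripping cells from the left by means of Proposition~\ref{P:KLFromDesc} and Theorem~\ref{T:KLBoolean}, exactly as in the proofs of Lemmas~\ref{L:EssentialComponents} and \ref{L:22}. The base cases are $F_{1_*}=1+q$ for a single generator and $F_{s_1s_2s_1}=(1+q)(1+q+q^2)$ in $S_3$; the latter is where the irreducible factor $1+q+q^2$ is produced, corresponding to the contribution of the terminal $(2,1_l)$ junction predicted by Theorem~\ref{T:KLBoolean}. Each further $2$ in the run is stripped by Lemma~\ref{L:22} at the cost of a factor $(1+q)^2$, so that a component $2^{h}1_*$ with $h\ge 1$ contributes $(1+q)^{2h-1}(1+q+q^2)$, while an isolated $1_*$ contributes $(1+q)$.

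Finally I would assemble the product. Writing $l_j$ and $a_j\in\{0,1\}$ for the length and the number of $(2,1_*)$ patterns of the $j$-th component, the previous step says that each factor equals $(1+q)^{l_j-2a_j}(1+q+q^2)^{a_j}$ (one checks $h=0$ and $h\ge 1$ separately). Deleting the $0$-cells does not change the total length, so $\sum_j l_j=l(v)$, while $\sum_j a_j=a(v)$ by the count above; multiplying the factors then yields $F_v=(1+q)^{l(v)-2a(v)}(1+q+q^2)^{a(v)}$. I expect the main obstacle to be the middle step, namely justifying that the terminal junction $2\to 1_l$ contributes the irreducible factor $1+q+q^2$ rather than one more power of $1+q$; this is exactly the content of the base computation in $S_3$ together with the correction term recorded in Lemma~\ref{L:22}, and everything else is bookkeeping.
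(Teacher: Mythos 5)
Your argument is correct and is essentially the paper's own proof: decompose via Lemma~\ref{L:EssentialComponents}, strip the leading $2$'s with Lemma~\ref{L:22} at a cost of $(1+q)^2$ each, and reduce to the base cases $F_{s_1s_2s_1}(q)=(1+q)(1+q+q^2)$ and $F_{s_1}(q)=1+q$ computed in $S_3$. You supply some bookkeeping the paper leaves implicit (that every essential component in type $A$ has the shape $2^{h}1_{*}$, and the length and pattern counts), but the route is the same.
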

By Lemmas \ref{L:EssentialComponents} and \ref{L:22} its proof reduces to compute $F_{s_1s_2s_1}(q)=(q^2+q+1)(1+q)$ and $F_{s_1}=(1+q)$ in $S_3$.

To prove the next result we have to compute the polynomials $F_{v,s\ne}(q)$ and $F_{v,s0}(q)$ with $v$ associated to the diagram 
\xymatrix{
2 \ar@{-}[r] & 2\ar@{-}[r] & \dots \ar@{-}[r] &1
}
with $i$ vertices. Let $s\in S$ be the element corresponding  to the first vertex and let $s'\in S$ be the element associated to the second vertex.
If $i=2$ then by direct computation we have
$$
F_{v,s\ne}(q)=q(1+q)^2 \quad  F_{v,s0}(q)=(1+q).
$$
By induction it is easy to compute that
\begin{align}
F_{v,s\ne}=&(2q+q^2)F'_{v,s'\ne}(q)+q F'_{v,s'0}(q)(1+q) = q(1+q)^{2i-2}\notag\\
F_{v,s0}=& F'_{v,s'\ne}(q)+ F'_{v,s'0}(q)(1+q)=(1+q)^{2i-3},\label{E:Fe1}
\end{align}
where $F'_{v}(q)$ denotes, as usual, the polynomial associated to the diagram without the first vertex. Similarly, let $v $ be the boolean reflection corresponding to the diagram
$$
\xymatrix{
2 \ar@{-}[r] & 2\ar@{-}[r] & \dots & 2\ar@{-}[r] \ar@{-}[r] &1\\
             &             &       & 2 \ar@{-}[ur]
}
$$
with $i+1$ vertices.
Then 
\begin{equation}
\label{E:Fe2}
F_{v,s\ne}=q(1+q)^{2i} \text{ and } F_{v,s0}=(1+q)^{2i-1}.
\end{equation}

\begin{proposition}
\label{P:Poincare}
Let $W$ be a Coxeter group such that its Coxeter graph is a tree and all vertices except at most one have degree less than $3$. Denote with $w$ such exceptional vertex. Let $v\in W$ be a boolean element. Then
$$
F_{v}(q)=(1+q+q^2)^{k-1}\big(q(1+q)^{h+1}+f_h(q)\big)(1+q)^{l(v)-2k-h-2},
$$
where $k$ is the number of essential components of the diagram $d$ of $v$ with at least two vertices and  $h$ is the number of entries equal to $2$ in the adjacent cells of $w$ (also consider  the cell on the right).
\end{proposition}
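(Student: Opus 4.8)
The plan is to reduce the computation to the individual essential components by Lemma~\ref{L:EssentialComponents}, which gives $F_v(q)=\prod_i F_{v_i}(q)$, the product running over all essential components $d_i$ of the diagram of $v$ (single-vertex ones included). I would then sort the components into three kinds. Using the fact that a cell equal to $2$ always has a non-zero root-ward neighbour (which follows from the description of the entries, cf.\ Lemma~\ref{L:TranAn}), the root-most cell of any essential component is forced to carry a $1$; hence a single-vertex component is a lone $1$, and every multi-vertex component avoiding $w$ is a path of the shape $2-2-\cdots-2-1$ carrying exactly one $(2,1_*)$ pattern.

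A single-vertex component contributes the factor $F_{s}=1+q$. For a path component I would invoke the chain computation (\ref{E:Fe1}): writing $s$ for its leftmost cell, $F_{v_i,s\ne}+F_{v_i,s0}=q(1+q)^{l(v_i)-1}+(1+q)^{l(v_i)-2}=(1+q+q^2)(1+q)^{l(v_i)-2}$, which also reproduces Corollary~\ref{C:Marietti}. Consequently the $k-1$ path components (all multi-vertex components except the one through $w$) jointly yield $(1+q+q^2)^{k-1}$ up to a power of $(1+q)$, while the single-vertex components add only powers of $(1+q)$.

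The heart of the argument is the unique component $d_w$ through the branch vertex $w$, responsible for the factor $q(1+q)^{h+1}+f_h(q)$. Here $d_w$ is a rooted subtree whose arms are chains of $2$-cells converging at $w$ and whose sink is a $1$. I would collapse the arms one at a time by the recursion of Proposition~\ref{P:KLFromDesc}, exactly as in the proof of Lemma~\ref{L:EssentialComponents}, using the auxiliary values of $F_{\cdot,s\ne}$ and $F_{\cdot,s0}$ recorded in (\ref{E:Fe1}) and (\ref{E:Fe2}); this reduces matters to a local problem at $w$ in which precisely $h$ of the cells adjacent to $w$ (the root-ward cell included) carry a $2$. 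At that stage I would run Proposition~\ref{P:KLFromDesc} once more and sum $q^{l(v_w)}P_{u,v_w}$ over the admissible lower elements $u$, reading the Kazhdan--Lusztig values off the product formula of Theorem~\ref{T:KLBoolean}: the configurations whose $u$-diagram still exhibits a full $(2)^{h}-1$ column pattern contribute the Catalan polynomial $f_h(q)$, and the remaining configurations assemble into $q(1+q)^{h+1}$, the telescoping being governed by Lemma~\ref{L:Propf}. As a consistency check, when $h=0$ one has $f_0=1$ and the factor degenerates to $1+q+q^2$, matching a path component.

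The main obstacle will be this branching step: one must enumerate correctly which diagrams of $u$ enter the correction term $\widetilde M_{u,v_w}$ of Proposition~\ref{P:KLFromDesc}, decide which of them carry an extra $f_{h+1}$-factor in Theorem~\ref{T:KLBoolean}, and verify that the $\mu$-contributions cancel precisely through Lemma~\ref{L:Propf}; handling the bookkeeping of the several arms simultaneously is where the delicacy lies. Once the three factors are in hand, a routine length count — each lone $1$ and each path contributing its own powers of $(1+q)$, and $d_w$ supplying the rest — reconciles the total $(1+q)$-exponent with $l(v)-2k-h-2$, which completes the proof.
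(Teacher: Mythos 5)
Your global strategy is the paper's: factor $F_v$ over the essential components via Lemma~\ref{L:EssentialComponents}, note that every multi-vertex component avoiding $w$ is a chain contributing $(1+q+q^2)$ times a power of $(1+q)$ by the computation (\ref{E:Fe1}), and isolate the component through $w$ as the sole source of the factor $q(1+q)^{h+1}+f_h(q)$. Your classification of components (a lone $1$ in the root-most cell, $2$'s everywhere else) and the check that $h=0$ degenerates to $1+q+q^2$ are both correct and consistent with what the paper does.

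The gap is precisely the branching step, which is the only content of the Proposition not already in Corollary~\ref{C:Marietti}, and which you describe without carrying out; moreover the route you sketch for it is harder than what is needed. You propose to re-enter the recursion of Proposition~\ref{P:KLFromDesc} at $w$ and to control the correction term $\widetilde M_{u,v_w}$ and its $\mu$-cancellations via Lemma~\ref{L:Propf}. That machinery belongs to the proof of Theorem~\ref{T:KLBoolean}, not here: once Theorem~\ref{T:KLBoolean} is in hand, every $P_{u,v}$ is a known closed product and $F_v=\sum_{u\le v}q^{l(u)}P_{u,v}$ is a purely combinatorial sum over subdiagrams, with no recursion and no $\mu$-terms to cancel. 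The paper instead (i) uses Lemma~\ref{L:22} --- which you never invoke --- to delete, one $(1+q)^2$ factor at a time, all but the last cell of each arm of $w$, reducing the component through $w$ to one of two explicit normal forms $(2)^{h'}\rightarrow\cdots\rightarrow 1$ (possibly with one extra $2$ near the root); and then (ii) conditions on whether $\overline u$ vanishes at the cell $s'$ to which the $h'$ arm-cells attach: if $\overline u(s')\ne 0$ each arm cell ranges freely over $\{0,1_l,1_r,2\}$ with total weight $(1+q)^{2h'}$ against $F'_{v,s'\ne}=q(1+q)^{2i-2}$, producing the $q(1+q)^{h+1}$ summand, while if $\overline u(s')=0$ the $a_h$-pattern of Theorem~\ref{T:KLBoolean} yields the single factor $f_h$ against $F'_{v,s'0}=(1+q)^{2i-3}$. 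Without step (i) your ``several arms simultaneously'' bookkeeping never collapses to a single application of the $a_h$-count, and without step (ii) the two summands of the claimed factor are not identified; as written, the decisive computation is absent rather than merely compressed.
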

The formula is also true when there is no vertex of degree greater than $2$: in this case let $w$ be any vertex of degree $2$. 

\begin{proof}
By Lemma \ref{L:EssentialComponents} it suffices to compute the polynomial associated to the only non-trivial component.  By Lemma \ref{L:22} it suffices to consider only the following two cases.
$$
\xymatrix{
{(2)^{h'}}\ar@{-}[r] & \dots\ar@{-}[r] & 1
}$$
$$
\xymatrix{
{(2)^{h'}}\ar@{-}[r] & \dots\ar@{-}[r] & 1\\
& 2\ar@{-}[ur]
}.$$

In the first case we compute
\begin{align*}
F_{v}(q)=&(1+q)^{2h'} F'_{v,s'\ne}(q)+(1+q)^{h'} f_{h}F'_{v,s'0}(q)\\
=& (1+q)^{h'+2i-3}\big(q(1+q)^{h+1}+f_h\big)\quad\quad \text{ by (\ref{E:Fe1})},
\end{align*}
where $F'_{v}(q)$ is the polynomial associate to the diagram without the $h'$ leftmost cells and $i$ is an integer.
The second case is similar; use (\ref{E:Fe2}).
\end{proof}

\end{document}